\definecolor{darkblue}{rgb}{0.0,0,0.7}
\newcommand{\darkblue}{\color{darkblue}}
\definecolor{darkred}{rgb}{0.68,0,0}
\newcommand{\darkred}{\color{darkred}}
\definecolor{darkgreen}{rgb}{0,.38,0}
\newcommand{\darkgreen}{\color{darkgreen}}
\newcommand{\defn}[1]{\emph{\darkblue #1}}
\newcommand{\defna}[1]{\emph{\darkred #1}}
\newcommand{\defnb}[1]{\emph{\darkblue #1}}
\newcommand{\defng}[1]{\emph{\darkgreen #1}}
\setlist[enumerate]{
	label=\textnormal{({\roman*})},
	ref={\roman*}}
\def\th@plain{%
	\thm@notefont{}
	\itshape 
}
\def\th@definition{%
	\thm@notefont{}
	\normalfont 
}
\newtheorem{thm}{Theorem}[section]
\newtheorem{lemma}[thm]{Lemma}
\newtheorem*{claim*}{Claim}
\newtheorem{cor}[thm]{Corollary}
\newtheorem{prop}[thm]{Proposition}
\newtheorem{conj}[thm]{Conjecture}
\newtheorem{op}[thm]{Open Problem}
\theoremstyle{definition}
\newtheorem{ex}[thm]{Example}
\newtheorem{rem}[thm]{Remark}
\numberwithin{figure}{section}
\numberwithin{equation}{section}
\def\wh{\widehat}
\def\emp{\nothing}
\def\zz{\mathbb Z}
\def\nn{\mathbb N}
\def\ll{{\mathcal{L}}}
\def\rr{\mathbb R}
\def\pp{\mathbb P}
\def\ov{\overline}
\def\sm{\smallsetminus}
\def\Om{\Omega}
\def\BOm{{\mathbf{\Omega}}}
\def\la{\lambda}
\def\ga{\gamma}
\def\si{\sigma}
\def\de{\delta}
\def\al{\alpha}
\def\be{\beta}
\def\ve{\varepsilon}
\def\vp{\varphi}
\def\cC{\mathcal C}
\def\cB{\mathcal B}
\def\cI{\mathcal I}
\def\cA{\mathcal A}
\def\ce{\mathcal E}
\def\cH{\mathcal H}
\def\cR{\mathcal R}
\def\cS{\mathcal S}
\def\ssu{\subset}
\def\<{\langle}
\def\>{\rangle}
\def\0{{\mathbf 0}}
\def\nothing{\varnothing}
\def\.{\hskip.06cm}
\def\ts{\hskip.03cm}
\def\bz{{\textbf{z}}}
\def\bw{\textbf{\textit{w}}}
\def\bv{\textbf{\textit{v}}}
\def\ba{\textbf{\textbf{a}}}
\def\bc{\textbf{\textbf{c}}}
\def\bct{\emph{\textbf{\textbf{c}}}}
\def\di{{\small{\ts\diamond\ts}}}
\def\ze{{\zeta}}
\newcommand{\Ehr}{\mathrm{Ehr}}
\newcommand{\maj}{\mathrm{maj}}
\def\aN{\textrm{N}}
\def\aNr{\textrm{\em N}}
\def\.{\hskip.06cm}
\def\ts{\hskip.03cm}
\def\nin{\noindent}
\def\SP{{\textsc{\#P}}}
\def\poly{{\textsc{P}}}
\def\GapP{{\textsc{GapP}}}
\def\xx{\textbf{\textit{x}}}
\def\yy{\textbf{\textit{y}}}
\def\uu{\textbf{\textit{u}}}
\def\aa{\textbf{{\textit{a}}}}
\def\RG{\textrm{G}}
\def\RH{\textrm{H}}
\def\GG{\wh{\RG}}
\newcommand{\inv}{\operatorname{{\rm inv}}}
\newcommand{\LE}{\ce}
\def\Com{ {\text {\rm Com} } }
\DeclareMathOperator{\Ec}{\mathcal{E}} 
\DeclareMathOperator{\Rb}{\mathbb{R}} 
\def\precc{\preccurlyeq} 
\title{Effective poset inequalities}
\date{\today}
\author{Swee Hong Chan}
\address[Swee Hong Chan]{Department of Mathematics, Rutgers University,  Piscatway, NJ 08854.}
\email{\texttt{sc2518@rutgers.edu}}
\author[\ts Igor Pak]{Igor Pak}
\address[Igor Pak]{Department of Mathematics, UCLA,  Los Angeles, CA 90095.}
\email{\texttt{pak@math.ucla.edu}}
\author[\ts Greta Panova]{Greta Panova}
\address[Greta Panova]{Department of Mathematics, USC,  Los Angeles, CA 90089.}
\email{\texttt{gpanova@usc.edu}}
\begin{document}

\begin{abstract}
We prove a number of new inequalities for the numbers of
linear extensions and order polynomials of finite posets.
First, we generalize the \emph{Bj\"orner--Wachs inequality}
to inequalities on order polynomials and their $q$-analogues
via direct injections and FKG inequalities, and 
establish several new inequalities on order polynomials. 

Second, we generalize actions of Coxeter groups on
restricted linear extensions, leading to vanishing and uniqueness
conditions for the \emph{generalized Stanley inequality}.
Third, we generalize the \emph{Sidorenko inequality}
to posets with small chain intersections and give complexity
theoretic applications.
\end{abstract}
	
\maketitle
	

\section{Introduction}

\medskip

\subsection{Foreword}\label{ss:intro-foreword}
There are two schools of thought on what to do when an \defng{interesting
combinatorial inequality} is established.  The first approach would
be to treat it as a tool to prove a desired result.  The inequality
can still be sharpened or generalized as needed, but this effort is
aimed with applications as the goal and not about the inequality per se.

The second approach is to treat the inequality as a result of importance
in its own right.  The emphasis then shifts to finding the ``right proof''
in an attempt to understand, refine or generalize it, in which case
we say that the inequality can be made \defna{effective}.  This is where the
nature of the inequality intervenes --- when both sides count combinatorial 
objects, the desire to relate these objects is overpowering.   

The inequality can be made effective in several different ways.   
A \defng{direct injection} \ts can give it a combinatorial interpretation
for the difference or prove the equality conditions.
Such an injection can also be a work of art,
inspiring and thought-provoking in the best case.
Alternatively, a \defng{technical proof} (say, probabilistic or algebraic),
can establish tools for generalizations out of reach by direct combinatorial
arguments.

Both types of proof are most impactful when presented in combination.
Making comparisons between different approaches can lead to further
results, new open problems, and is the source of wonder of the beauty
and diversity of mathematics.

\medskip

As the reader must have guessed, we aim to make effective several
celebrated combinatorial inequalities for the numbers of
linear extensions of finite posets:

\smallskip

\qquad $\circ$ \ the \defn{Bj\"orner--Wachs inequality},

\smallskip

\qquad $\circ$ \ the \defn{Sidorenko inequality}, and

\smallskip

\qquad $\circ$ \ the \defn{generalized Stanley inequality}.

\smallskip

\nin
Although there is a certain commonality of tools and approaches,
our investigation of these inequalities are largely independent,
united by the goal of being effective, i.e.\ extending these 
inequalities with the goal of understanding them on a deeper level.  
In addition to injections, we also use \defng{probabilistic} \ts 
and \defng{algebraic tools}, with some curious combinatorial twists.

\smallskip

\subsection{Extensions and generalizations of the Bj\"orner--Wachs inequality}\label{ss:intro-HP}

Let \. $P=(X,\prec)$ \. be a poset with \. $|X|=n$ \. elements.
For each element \ts $x \in X$, let \.
$B(x) := \big\{y \in X \. : \. y \succcurlyeq x  \big\}$ \.
be the \defn{upper order ideal} generated by~$x$, and let \. $b(x):=|B(x)|$.

A \defn{linear extension} of $P$ is a bijection \. $f: X \to [n]=\{1,\ldots,n\}$,
such that
\. $f(x) < f(y)$ \. for all \. $x \prec y$.
Denote by \ts $\Ec(P)$ \ts the set of linear extensions of $P$,
and let \. $e(P):=|\Ec(P)|$.

\smallskip

\begin{thm}[{\rm Bj\"orner and Wachs~\cite[Thm~6.3]{BW89}}{}]\label{t:HP}
Let \ts $P=(X,\prec)$ \ts be a poset with \ts $|X|=n$ \ts elements.
In the notation above, we have:
\begin{equation}\label{eq:HP}
e(P) \ \geq  \  \. n! \.\cdot \. \prod_{x \in X} \, \frac{1}{b(x)}\,.
\end{equation}
\end{thm}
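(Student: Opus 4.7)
The plan is to prove the inequality by induction on $n = |X|$, exploiting the decomposition of linear extensions according to which element receives the smallest label. The base case $n=1$ is immediate. For the inductive step, I would partition $\Ec(P)$ by noting that the element assigned label $1$ must be minimal, which gives the recursion
\[
e(P) \ = \ \sum_{x \in \min(P)} \. e(P \setminus \{x\}),
\]
where $\min(P)$ denotes the set of minimal elements of $P$.

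Apply the inductive hypothesis to each summand to obtain $e(P\setminus\{x\})\ge (n-1)!\prod_{y\neq x} 1/b_{P\setminus\{x\}}(y)$. The structural observation that makes the argument work cleanly is that for $x \in \min(P)$ and $y \neq x$, minimality forces $x \not\succcurlyeq y$, so $x \notin B_P(y)$ and hence $b_{P \setminus \{x\}}(y) = b_P(y)$. Substituting this simplification and pulling the common factor out yields
\[
e(P) \ \geq \ (n-1)! \.\left(\prod_{y \in X} \frac{1}{b_P(y)}\right) \sum_{x \in \min(P)} b_P(x).
\]
It therefore suffices to verify the combinatorial inequality $\sum_{x \in \min(P)} b_P(x) \geq n$. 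This I would prove by a double count: swapping the order of summation gives
\[
\sum_{x \in \min(P)} b_P(x) \ = \ \sum_{z \in X}\. \bigl|\{\ts x \in \min(P) \.:\. x \preccurlyeq z\ts\}\bigr|,
\]
and every $z \in X$ admits at least one minimal element $x \preccurlyeq z$ (take any minimum of $\{y : y \preccurlyeq z\}$, which is finite and nonempty), so the sum is at least $n$.

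I do not expect any serious obstacle in carrying out this plan; the only mildly subtle step is recognizing that minimality of $x$ is exactly what keeps the $b(y)$ values unchanged upon deletion, so the induction closes without error terms. If one wishes to upgrade this to an \emph{effective} proof in the sense emphasized in the paper, the natural next step would be to unfold the recursion into an explicit injection from bijections $X \to [n]$ into $\Ec(P) \times \prod_{x \in X} B(x)$: the $\sum_{x \in \min(P)} b_P(x) \geq n$ step would be realized by choosing, for each $z \in X$, a distinguished minimal element $\mu(z) \preccurlyeq z$, and using $\mu$ to "repair" bijections whose minimum label is assigned to a non-minimal element. Designing $\mu$ and the repair map compatibly is the part that would require care, but the inductive inequality above already forces the correct bookkeeping.
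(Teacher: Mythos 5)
Your proof is correct, and it takes a genuinely different route from the one in the paper. You induct on $n$ using the standard recursion $e(P)=\sum_{x\in\min(P)}e(P\setminus\{x\})$, observe that deleting a minimal element $x$ leaves $b(y)$ unchanged for all $y\ne x$, and close the induction with the double count $\sum_{x\in\min(P)}b(x)=\sum_{z\in X}\bigl|\{x\in\min(P): x\preccurlyeq z\}\bigr|\ge n$; each step checks out, and the base case is trivial. The paper instead reproduces the original Bj\"orner--Wachs argument: a direct injection $\Phi\colon \cS(P)\to\Ec(P)\times\cB(P)$, where $\cS(P)$ is the set of all bijections $X\to[n]$ and $\cB(P)$ is the set of maps $g$ with $g(x)\succcurlyeq x$, built from a sorting (demotion) algorithm with an explicit inverse via promotion. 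What the injection buys is exactly the ``effective'' content you allude to at the end: it yields Theorem~\ref{t:HP-SP} (the difference $\xi(P)$ is in $\SP$) and the equality characterization in Proposition~\ref{p:HP-forest}, neither of which falls out of your induction without the additional (and nontrivial) work of designing the repair map you sketch. What your argument buys is brevity and self-containedness; it is also the linear-extension analogue of the order-polynomial induction in Lemma~\ref{l:OP-HP-ind}, but simpler, since at the level of $e(P)$ the recursion over minimal elements is exact and no FKG-type correlation inequality is needed to handle several minimal elements.
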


\smallskip

This inequality was popularized by Stanley who stated it without proof
or a reference in \cite[Exc.~3.57]{Sta-EC}.\footnote{Richard Stanley
informed us that he indeed took it from~\cite{BW89} (personal communication, March 27, 2022).}
When the poset is a tree rooted at the
minimal element, the inequality in the theorem is an equality known
in the literature as the \defn{hook-length formula for trees}.  A
variation on the classical \defn{hook-length formulas} for straight and
shifted Young diagrams, this case is usually attributed to Don Knuth (1973),
see e.g.~\cite{Bea,SY}.  Although for other families of poset the lower bound
on~$e(P)$ given by~\eqref{eq:HP} is relatively weak, nothing better is
known in full generality, see e.g.~\cite{BP,MPP-asy,Pak-case}.

\smallskip

We start by recalling the original direct injective proof by Bj\"orner and Wachs
of the inequality~\eqref{eq:HP}. This allows us to prove that the inequality is in~$\SP$
(Theorem~\ref{t:HP-SP}).  We then obtain the following extension
of Theorem~\ref{t:HP}.

\smallskip

Let \ts $[k]:=\{1,\ldots,k\}$.
For an integer \ts $t\ge 1$, denote by \. $\Omega(P,t)$ \. the number of
order preserving maps \. $g: X\to [t]$, i.e.\ maps which satisfy
\. $g(x)\le g(y)$ \. for all \. $x \prec y$.  This is the \defn{order polynomial}
corresponding to poset~$P$, see e.g.~\cite[$\S$3.12]{Sta-EC}.

\smallskip

\begin{thm}\label{thm:HP order poly}
Let \ts $P=(X,\prec)$ \ts be a poset with \ts $|X|=n$ \ts elements.  Then,
for every \ts $t\in \nn$, we have:
\begin{equation}\label{eq:OP-HP}
\Omega(P,t) \ \geq \ t^r \ts (t+1)^{n-r} \, \prod_{x \in X} \. \frac{1}{b(x)}\,,
\end{equation}
where $r$ is the number of maximal elements of~$P$.
\end{thm}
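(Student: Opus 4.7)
The plan is to establish Theorem~\ref{thm:HP order poly} by extending the direct injective proof of the Björner--Wachs inequality (recalled earlier in the paper for the case $t = 1$) to arbitrary~$t$. After multiplying \eqref{eq:OP-HP} through by $\prod_{x \in X} b(x)$, the inequality becomes $|T_1| \geq |T_2|$, where
\[
T_1 \,:=\, \bigl\{\, (g, \phi) \,:\, g \colon X \to [t] \text{ order-preserving},\ \phi(x) \in B(x) \text{ for all } x\bigr\}, \quad |T_1| \,=\, \Omega(P, t) \cdot \prod_x b(x),
\]
\[
T_2 \,:=\, \bigl\{\, h \colon X \to \Zb \,:\, h(x) \in [t] \text{ if } x \text{ is maximal},\ h(x) \in [t+1] \text{ otherwise}\bigr\}, \quad |T_2| \,=\, t^r (t+1)^{n-r}.
\]
The goal is therefore to construct an injection $\Phi \colon T_2 \hookrightarrow T_1$.

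As a first attempt, given $h \in T_2$, I would set $\phi(x) \in B(x)$ to be the element minimizing $h$ on $B(x)$ (with ties broken by a fixed natural labeling of $P$), and define $g(x) := h(\phi(x))$. Then $g$ is order-preserving because $x \preccurlyeq y$ gives $B(y) \subseteq B(x)$, hence $g(y) \geq g(x)$; and $g(x) \leq t$ since $B(x)$ contains a maximal element whose $h$-value lies in $[t]$. Thus $(g, \phi) \in T_1$, giving a well-defined map $T_2 \to T_1$.

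This provisional map is generally not injective, however: the value $h(x)$ is forgotten for each $x$ outside the image of~$\phi$. The main obstacle is to refine the construction so that $\phi$ encodes this missing data. Observe that any such $x$ is automatically non-maximal (for maximal $x$, $B(x) = \{x\}$ forces $\phi(x) = x$), and so $h(x) \in [t+1]$ carries one ``extra'' value above $[t]$; the $b(x) - 1 \geq 1$ non-identity options for $\phi(x)$ in $B(x)$ provide the necessary room to encode it. I would then process the elements of $X$ in a reverse natural order and record the excess of $h(x)$ over $g(x)$ inductively, in the spirit of the Björner--Wachs argument for $t = 1$. Verifying that the refinement simultaneously preserves order-monotonicity of $g$ and the constraint $\phi(x) \in B(x)$ is the delicate combinatorial core of the proof, and the exact matching of ``extra $h$-values'' against ``non-identity $\phi$-choices'' is the structural reason the injection exists.
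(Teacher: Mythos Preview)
Your proposal has a genuine gap: the injection is never actually defined. You give a first attempt, correctly note it is not injective, and then gesture toward a refinement (``record the excess of $h(x)$ over $g(x)$ inductively, in the spirit of the Bj\"orner--Wachs argument''), but this refinement is left entirely unspecified --- and that step is the entire proof. Already for the two-element chain $x_1 \prec x_2$ one sees the obstacle: when $h(x_1) > h(x_2)$ your first attempt collapses all such $h$ to $g = (h(x_2), h(x_2))$, $\phi(x_1) = x_2$, and the ``excess'' $h(x_1) - h(x_2)$ ranges over $\{1, \ldots, t+1-h(x_2)\}$, whereas the single non-identity option for $\phi(x_1)$ has already been used up. A correct bijection for this poset must also modify $g(x_2)$ --- for instance, map such $h$ to $\bigl(g(x_1),g(x_2),\phi(x_1)\bigr) = \bigl(h(x_2),\, h(x_1)-1,\, x_2\bigr)$ --- so the demotion-style sorting of Section~\ref{s:injective} does not carry over without a genuinely new idea. (A side remark: that sorting is not the case $t=1$ of Theorem~\ref{thm:HP order poly}; the Bj\"orner--Wachs inequality~\eqref{eq:HP} is recovered from~\eqref{eq:OP-HP} only in the limit $t\to\infty$, via~\eqref{eq:OP-asy}.)

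The paper's proof is entirely different and contains no injection. It proceeds by induction on~$|X|$, removing a minimal element~$x$ at each step and establishing the ratio bound
\[
\frac{\Omega(P,t)}{\Omega(P \sm x,\, t)} \ \geq \ \frac{t+1}{b(x)}
\]
for non-maximal~$x$ (Lemma~\ref{l:OP-HP-ind}). When $x$ is the \emph{unique} minimal element this follows from a direct summation over chains of upper order ideals. The reduction to that case is the heart of the argument: it uses the FKG inequality on Shepp's lattice to prove the correlation inequality of Lemma~\ref{l:main order polynomial}, which shows that deleting other minimal elements can only decrease the ratio $\Omega(P,t)/\Omega(P\sm x,t)$. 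If an injective proof along your lines could be completed it would have the pleasant consequence of placing the defect of~\eqref{eq:OP-HP} in~$\SP$, in the spirit of Theorems~\ref{t:HP-SP} and~\ref{t:OP-SP}; but as written, that remains a hope rather than a proof.
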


\smallskip

Let us note that
\begin{equation}\label{eq:OP-asy}
\Omega(P,t) \ \sim \ \frac{e(P)\, t^n}{n!}   \quad \text{as} \ \ \ t \to \infty\ts.
\end{equation}
Thus, Theorem~\ref{thm:HP order poly} implies Theorem~\ref{t:HP}.
Note also that \. $\Omega(P,t) \ts = \ts \Omega(P^\ast,t)$, where \ts $P^\ast=(X, \prec^\ast)$ \ts
is the poset where relations are reversed:  \ts $x\prec y \ \Leftrightarrow \ y \prec^\ast x$.
Thus, the theorem holds when maximal elements are replaced with minimal elements.

The tools we use to establish Theorem~\ref{thm:HP order poly} are based on
\defng{Shepp's lattice}, and are extremely
far-reaching.  Notably, they allows us to establish 
the following \defng{strict log-concavity} \ts of the
order polynomial:

\smallskip

\begin{thm}[{\rm = Theorem~\ref{thm:log-concave-strict}}{}]
\label{thm:log-concave-intro}
	Let $P=(X,\prec)$ be a finite poset.
	Then, for every integer \ts $t \geq 2$, we have:
	\[ \Omega(P,t)^2  \  > \ \Omega(P,t+1) \. \cdot\.  \Omega(P,t-1).\]
\end{thm}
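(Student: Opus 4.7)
The plan is to first establish the non-strict inequality $\Omega(P,t)^2 \geq \Omega(P,t-1)\,\Omega(P,t+1)$ via the Ahlswede--Daykin $4$-functions inequality applied to a shifted configuration on the lattice of order-preserving maps, and then to upgrade to strictness for $t \geq 2$ by exhibiting explicit slack; the latter is the main obstacle.

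\textbf{Non-strict step.} I would work in the distributive lattice $L$ of all order-preserving maps $g : X \to \{1, 2, \ldots, t+1\}$, equipped with pointwise $\vee$ and $\wedge$ --- both of which preserve the order-preserving property. Inside $L$, define four subsets:
\[
A := \{g \in L : 2 \leq g(x) \leq t \text{ for all } x\}, \qquad B := L,
\]
\[
C := \{g \in L : g(x) \geq 2 \text{ for all } x\}, \qquad D := \{g \in L : g(x) \leq t \text{ for all } x\}.
\]
By shifting the target interval, $|A| = \Omega(P,t-1)$, $|B| = \Omega(P,t+1)$, and $|C| = |D| = \Omega(P,t)$. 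The crucial geometric observation is that for $h \in A$ and $h' \in B$, one has $h \vee h' \in C$ (since $h \vee h' \geq h \geq 2$) and $h \wedge h' \in D$ (since $h \wedge h' \leq h \leq t$). Applying the Ahlswede--Daykin $4$-functions theorem with the indicators of $A, B, C, D$ immediately gives
\[
\Omega(P,t-1) \cdot \Omega(P,t+1) \; = \; |A| \cdot |B| \; \leq \; |C| \cdot |D| \; = \; \Omega(P,t)^2.
\]

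\textbf{Strict step.} For strictness when $t \geq 2$, I would analyze the induced map $\Psi : A \times B \to C \times D$ defined by $\Psi(h, h') := (h \vee h', h \wedge h')$ and exhibit a pair in $C \times D$ that cannot arise in its image. The constant pair $u \equiv t+1$, $v \equiv 1$ is a witness: if $\Psi(h, h') = (u, v)$, then $h \vee h' \equiv t+1$ forces $h' \equiv t+1$ (since $h \leq t$ on $A$), whereupon $h \wedge h' = h$, so $h \equiv 1$, contradicting $h \geq 2$. Combining such an obstruction with a careful fiber analysis of $\Psi$ should then yield the strict gap.

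\textbf{Main obstacle.} The difficulty lies in strictness. The Ahlswede--Daykin inequality is not intrinsically strict, and the proof must refine it --- either to a direct injection with visible \emph{leftover} elements on the target, or via an analysis of the equality cases in the $4$-functions theorem. The hypothesis $t \geq 2$ enters precisely because the shifted range $\{2, \ldots, t\}$ defining $A$ must be nonempty for the slack to manifest; for $t = 1$, the claim degenerates to $\Omega(P, 1)^2 = 1 > 0 = \Omega(P, 0)\cdot \Omega(P, 2)$, trivially true.
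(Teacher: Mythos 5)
Your non-strict step is correct and is essentially the same mechanism as the paper's: the paper applies the FKG inequality on Shepp's lattice (all maps $X\to[t]$, with the log-supermodular indicator of order-preservation as the measure) to the increasing/decreasing indicators of $\{v_x\ge 2\ \forall x\}$ and $\{v_x\le t-1\ \forall x\}$, which is the same correlation statement you extract from Ahlswede--Daykin on the lattice of order-preserving maps with the shifted windows $A,B,C,D$. So up to $\Omega(P,t)^2\ \ge\ \Omega(P,t+1)\,\Omega(P,t-1)$ the proposal is sound.

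The genuine gap is the strict inequality, which is what the stated theorem actually asserts. Your witness argument does not work as an upgrade: the inequality $|A|\,|B|\le |C|\,|D|$ furnished by the four-functions theorem is \emph{not} proved by showing that $\Psi(h,h')=(h\vee h',h\wedge h')$ is an injection into $C\times D$ --- indeed $\Psi$ is neither injective nor surjective in general --- so exhibiting a pair of $C\times D$ outside the image of $\Psi$ gives no information about strictness. (For instance, on the two-element chain with $A=B=C=D=L$ one has $|A||B|=|C||D|$, yet $\Psi$ misses the pair $(\hat 0,\hat 1)$; non-surjectivity of $\Psi$ is compatible with equality.) Turning your obstruction into strictness would require either an analysis of the equality cases of the Ahlswede--Daykin/FKG inequality in this setting or a quantitative refinement, and the "careful fiber analysis" you defer is precisely the missing content. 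The paper handles this by proving a quantitative strengthening (Theorem~\ref{thm:log-concave-strict}): it factors the two indicators into coordinatewise log-modular pieces $g=g_1\cdots g_n$, $h=h_1\cdots h_n$, runs a chain of FKG inequalities with successively modified log-supermodular measures to peel off all but one factor, and then for the last coordinate makes a direct counting estimate (Lemma~\ref{lem:strict-FKG}) that produces an explicit multiplicative gap of the form $1+1/(t+1)^{n+1}$, from which strictness for all $t\ge 2$ follows. As written, your proposal establishes only the non-strict inequality.
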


\smallskip

We use this result to we obtain the asymptotic version of
\defng{Graham's conjecture} proved by Daykin--Daykin--Paterson in 
\cite{DDP} by a direct injective argument (Theorem~\ref{conj:Graham}).   
%
%
%
%
%
Our next result is a general lower bound on the order polynomial strengthening the
asymptotic formula~\eqref{eq:OP-asy}.

\smallskip

\begin{thm}\label{thm:order_le}
Let \ts $P=(X,\prec)$ \ts be a poset with \ts $|X|=n$ \ts elements.  Then,
for every \ts $t\in \nn$, we have:
\begin{equation}\label{eq:OP-gen}
\Omega(P,t) \ \geq \ \frac{e(P)\, t^n}{n!}\..
\end{equation}
\end{thm}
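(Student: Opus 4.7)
The plan is to prove the equivalent integer inequality
\[
e(P)\cdot t^{n} \ \le \ n!\cdot \Omega(P,t)
\]
by a direct many-to-one correspondence from pairs $(L,c)$, with $L\in \mathcal{E}(P)$ and $c\in[t]^{n}$, to order-preserving maps $g\colon X\to [t]$, and then bounding its fibers by $n!$.

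Concretely, for a pair $(L,c)$ with $c=(c_1,\ldots,c_n)$, let $c^{\uparrow}_1 \le c^{\uparrow}_2 \le \cdots \le c^{\uparrow}_n$ denote the weakly increasing rearrangement of $c$, and set
\[
g_{L,c}(x) \ := \ c^{\uparrow}_{L(x)}.
\]
Because $L$ is a linear extension, $x \prec y$ forces $L(x)<L(y)$ and hence $g_{L,c}(x) \le g_{L,c}(y)$, so $g_{L,c}$ is order-preserving. The domain of the correspondence has size $e(P)\cdot t^{n}$.

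To bound the fibers, fix an order-preserving $g\colon X\to [t]$ and set $n_i := |g^{-1}(i)|$. Any preimage $(L,c)$ of $g$ must satisfy $c^{\uparrow}=(1^{n_1}\, 2^{n_2}\,\cdots\, t^{n_t})$, so $c^{\uparrow}$ is forced by $g$; the number of $c\in [t]^n$ rearranging to this $c^{\uparrow}$ is the multinomial $\binom{n}{n_1,\ldots,n_t}$. For each such $c$, the positions at which $c^{\uparrow}$ equals $i$ form a consecutive block of $[n]$, and the only freedom in $L$ is to send $g^{-1}(i)$ bijectively onto that block as a linear extension of the induced subposet $P|_{g^{-1}(i)}$; the number of such $L$ is $\prod_{i}e(P|_{g^{-1}(i)})\le \prod_i n_i!$. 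Multiplying the two factors yields the fiber bound
\[
\binom{n}{n_1,\ldots,n_t}\cdot \prod_{i} n_i! \ = \ n!.
\]

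Summing this bound over the $\Omega(P,t)$ order-preserving maps $g$ gives $e(P)\cdot t^{n}\le n!\cdot \Omega(P,t)$, which rearranges to~\eqref{eq:OP-gen}. There is no real obstacle here: the only nontrivial ingredient is the tautological inequality $e(P|_{g^{-1}(i)}) \le n_i!$, and the rest is bookkeeping. A convenient byproduct is that the argument makes~\eqref{eq:OP-asy} transparent, and shows that equality in~\eqref{eq:OP-gen} for all $t$ occurs precisely when every level $g^{-1}(i)$ of every order-preserving $g$ is an antichain of $P$, i.e.\ when $P$ itself is an antichain.
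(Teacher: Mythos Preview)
Your argument is correct and rests on the same core construction as the paper: the map $(L,c)\mapsto g$ with $g(x)=c^{\uparrow}_{L(x)}$ is exactly the first component of the paper's injection $\Psi:\Ec(P)\times[t]^n\to\BOm(P,t)\times S_n$. The difference is in how the factor $n!$ is accounted for. You bound the fiber over each $g$ by $\binom{n}{n_1,\ldots,n_t}\prod_i e(P|_{g^{-1}(i)})\le n!$, which is slightly more elementary since it avoids defining the auxiliary permutation $\sigma$. The paper instead \emph{records} the fiber information explicitly: it packages the unused data (which $c$ rearranges to $c^{\uparrow}$, and which linear extension of each $P|_{g^{-1}(i)}$ was used) into a single $\sigma\in S_n$, yielding an explicit polynomial-time injection with a polynomial-time left inverse. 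That extra bookkeeping is precisely what is needed to conclude Theorem~\ref{t:OP-SP}, namely that $\zeta(P,t)=\Omega(P,t)\,n!-e(P)\,t^n\in\SP$; your fiber-counting version proves the inequality just as cleanly but does not directly yield that complexity statement.
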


\smallskip

Our proof of Theorem~\ref{thm:order_le} uses a direct injection.  Among other
applications of this approach, we prove that~\eqref{eq:OP-gen} is an equality
if and only if \ts $P$ \ts is an antichain (Corollary~\ref{c:OP2-equality}).

\smallskip

Since the Bj\"orner--Wachs inequality~\eqref{eq:HP} can be rather weak
in various special cases, neither of Theorems~\ref{thm:HP order poly} and~\ref{thm:order_le}
implies another (see Example~\ref{e:OP-Stanley}).
In a different direction, inequality~\eqref{eq:OP-gen}
strengthens the trivial inequality
\begin{equation}\label{eq:OP-gen-easy}
\Omega(P,t) \ \geq \ e(P) \cdot \binom{t}{n}
 \ = \ \frac{e(P)\,\. t\ts (t-1)\cdots (t-n+1)}{n!}\..
\end{equation}
Here the RHS counts the number of injections \. $f: X\to [t]$,
which are naturally mapped onto~$\Ec(P)$.  Note that~\eqref{eq:OP-gen}
agrees with~\eqref{eq:OP-gen-easy} in the leading term given also by~\eqref{eq:OP-asy},
but is sharper in the second term of the asymptotics.

Finally, we include an unpublished remarkably simple proof of the Bj\"orner--Wachs
inequality by Vic Reiner, via extension of the inequality to its $q$-analogue
(Theorem~\ref{t:HP-q}).  We then use our tools in~$\S$\ref{ss:q-analogue-our}
to obtain new inequalities for the \defng{$q$-order polynomial}.  Notably, we
obtain the following \defn{$q$-log-concavity}:

\smallskip

\begin{thm}[{\rm = Corollary~\ref{c:q-log-concavity}}{}]
	Let \ts $P=(X,\prec)$ \ts be a poset with \ts $|X|=n$ \ts elements.
Define
$$\Omega_q(P,t) \, : = \, \sum_{g} \ q^{|g(X)|-n}
$$
where the summation is over all \ts \defn{order preserving maps} \ts $g: X\to \{1,\ldots,t\}$,
i.e.\ maps which satisfy \. $g(x)\le g(y)$ \. for all \. $x \prec y$.
Then, for every integer \ts $t \geq 2$, we have:
\[ \Omega_q(P,t)^2  \  \geqslant_q \ \Omega_q(P,t+1) \. \cdot \. \Omega_q(P,t-1),\]
where the inequality holds coefficient-wise as a polynomial in~$q$.
\end{thm}

\smallskip

\subsection{Generalized Sidorenko inequality} \label{ss:intro-sid}
Below we give an equivalent but somewhat nonstandard reformulation
of the \defng{Sidorenko inequality} that makes it amenable for generalization.
A more traditional version is given in Section~\ref{s:sid}.

\smallskip

A \emph{chain} in a poset \. $P=(X,\prec)$ \. is a subset \. $\{x_1,\ldots,x_\ell\} \subseteq X$, such
that \. $x_1 \prec x_2 \prec \ldots \prec x_\ell\ts.$ Denote by \ts $\cC(P)$ \ts the
set of chains in~$P$.

\smallskip

\begin{thm}[{\rm Sidorenko~\cite{Sid}}]\label{t:sid}
Let \. $P=(X,\prec)$ \. and \. $Q=(X,\prec')$ \. be two posets on the same set
with \. $|X|=n$ \. elements.  Suppose
\begin{equation}\label{eq:sid-chain-condition}
\bigl|C \cap C'\bigr| \. \le \. 1 \quad \text{for all} \ \ C\in \cC(P), \ C' \in \cC(Q).
\end{equation}
Then:
\begin{equation}\label{eq:sid}
e(P) \. e(Q)\, \ge \, n!
\end{equation}
\end{thm}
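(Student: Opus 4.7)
The plan is to prove \eqref{eq:sid} by constructing an explicit injection \. $\Psi: S_n \hookrightarrow \Ec(P) \times \Ec(Q)$. Such an injection is strictly stronger than the bare inequality: it identifies $e(P)\ts e(Q) - n!$ with the number of pairs outside its image, and thereby places the difference in~$\SP$, matching the computational-complexity claim advertised in Section~\ref{ss:intro-sid}.

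First I would recast the chain condition \eqref{eq:sid-chain-condition} in pointwise form. Since every comparable pair $\{x,y\}$ is already a chain of length two, the hypothesis is equivalent to the statement that no two distinct elements of $X$ are comparable in both $P$ and $Q$. In particular, the strict relations $\prec$ and $\prec'$ are disjoint as subsets of $X \times X$, and their union is antisymmetric. This pointwise reformulation should power the main construction.

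For the construction of $\Psi$, my first instinct is a greedy sorting: given $\sigma \in S_n$, form $f_\sigma \in \Ec(P)$ by iteratively extracting the $P$-minimal element of the remainder that occurs earliest in~$\sigma$, and similarly $g_\sigma \in \Ec(Q)$. A hand-check on a three-element example (take $P$ with $x_1 \prec x_3$ and $Q$ with $x_2 \prec' x_3$, all other pairs incomparable) shows this naive map already collapses: when $\sigma$ begins with a non-minimal element, that information is forgotten in \emph{both} coordinates. The repair I would pursue is a canonical matching construction: associate to each $\sigma$ a bipartite graph whose left and right vertices represent $P$- and $Q$-positions, with edges encoding admissible placements of elements of $X$ under the disjointness established above, and then read off $(f_\sigma, g_\sigma)$ as the two coordinate projections of a canonical Hall/K\"onig perfect matching in this graph.

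The decisive step, which I expect to be the main obstacle, is injectivity of $\Psi$; this is where the chain condition must do essential work, since the inequality fails outright without it. The heart of the argument is to reconstruct $\sigma$ from $(f_\sigma, g_\sigma)$ via a local analysis at positions where the two extensions disagree, using the disjointness of $\prec$ and $\prec'$ to exclude ambiguous configurations. Once $\Psi$ is explicit and its defining matching is polynomial-time computable by standard augmenting-path methods, $\SP$-membership of $e(P)\ts e(Q) - n!$ follows at once.
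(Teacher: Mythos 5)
Your reformulation of the hypothesis is fine (condition~\eqref{eq:sid-chain-condition} is indeed equivalent to saying that no pair of distinct elements is comparable in both $P$ and $Q$), and aiming for an injection \. $S_n \hookrightarrow \Ec(P)\times \Ec(Q)$ \. is the right target. But the proposal has a genuine gap exactly where you say the difficulty lies: the injection is never actually constructed. After correctly observing that the greedy ``sort $\sigma$ in $P$ and in $Q$ separately'' map collapses, your repair is a ``canonical Hall/K\"onig matching'' in a bipartite graph of $P$-positions versus $Q$-positions --- but that graph is not defined, no Hall-type condition is stated or verified, it is not explained why a perfect matching in such a graph yields a \emph{pair of linear extensions} at all, and, most importantly, a matching chosen canonically from data that does not visibly depend on $\sigma$ cannot be injective in $\sigma$; the ``local analysis at positions where the two extensions disagree'' that is supposed to recover $\sigma$ is only asserted. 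Since you yourself identify injectivity as the heart of the matter, what remains is a plan, not a proof.

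For comparison, the paper does not attack the injection head-on but proceeds by induction on $n$, with the combinatorial work packaged in the promotion-chain map \. $\Phi:\Ec(P)\to\cC(P)$. The key facts are that \. $e(P-y)$ \. counts the linear extensions whose promotion chain passes through~$y$ (Lemma~\ref{lem:DFS}), and the Edelman--Hibi--Stanley inequality \. $\sum_{y\in A}e(P-y)\le e(P)$ \. for any antichain~$A$ (Corollary~\ref{cor:antichain}); since each chain of $P$ is an antichain of $Q$, these combine to give \. $\sum_{y\in X}e(P-y)\ts e(Q-y)\le e(P)\ts e(Q)$ \. (Lemma~\ref{lem:main}), and induction (each summand is at least $(n-1)!$, and there are $n$ summands) yields~\eqref{eq:sid}; the injection and the $\SP$ statement are then extracted by unwinding this recursion. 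If you want to salvage your approach, you would need to replace the unspecified matching step by an explicit rule of this kind --- e.g.\ peel off the element that $\sigma$ places first, record via $\Phi$ how it is inserted into each of the two extensions, and recurse --- and then prove reversibility of each peeling step.
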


\smallskip
Natural examples of posets \ts $(P,Q)$ \ts as in the theorem
are the \defn{permutation posets} \ts $\bigl(P_\si, P_{\ov \si}\bigr)$,
where \ts $P_\si=([n],\prec)$ \ts is defined as
$$
i\prec j \quad \Longleftrightarrow\quad  i<j \ \ \text{and} \ \ \si(i)<\si(j)\,, \quad \text{for all} \ \ i,j\in [n].
$$
and \.  $\ov{\si}:=\bigl(\si(n),\ldots,\si(1)\bigr)$.
In this case \ts $P_{\si}$ \ts is a \defng{$2$-dimensional poset}, and
\ts $P_{\ov \si}$ \ts is its \defng{plane dual}.

Sidorenko's original proof used combinatorial optimization and
proved also equality conditions for~\eqref{eq:sid}, see~$\S$\ref{ss:sid-proof}.
In~\cite{BBS}, the authors gave an easy reduction to a special
case of the (still open) \defng{Mahler conjecture} for \defng{convex corners}.
That special case was resolved earlier by Saint-Raymond~\cite{StR},
and was reproved and further extended in a series of papers, see \cite{AASS,BBS}
for the context and the references.

\smallskip

In this paper we give a direct injective proof of the Sidorenko inequality~\eqref{eq:sid},
which allows us to prove that the inequality is in~$\SP$ (Theorem~\ref{t:sid-SP}).
This completely resolves the open problem Morales and the last two authors in \cite{MPP}
of finding a combinatorial proof of~\eqref{eq:sid}.  Although presented differently, our
injection likely coincides with an injection of Gaetz and Gao~\cite{GG1}, see~$\S$\ref{ss:finrem-sid};
the latter was discovered independently and generalized to other Coxeter groups.

\smallskip

Our proof can also be extended to give the following generalization of Theorem~\ref{t:sid}.

\smallskip

\begin{thm}\label{t:sid-gen}
Let \. $P=(X,\prec)$ \. and \. $Q=(X,\prec')$ \. be two posets on the same set
with \. $|X|=n$ \. elements.  Suppose
$$
\bigl|C \cap C'\bigr| \. \le \. k \quad \text{for all} \ \ C\in \cC(P), \ C' \in \cC(Q).
$$
Then:
\begin{equation}\label{eq:sid-gen}
e(P) \. e(Q)\, \ge \,  \frac{n!}{k^{n-k}\. k!} \..
\end{equation}
\end{thm}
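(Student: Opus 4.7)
The plan is to extend the direct-injection proof of Theorem~\ref{t:sid} (the case $k=1$) by showing that the same map, rather than being injective, is at most $k^{n-k}\cdot k!$-to-one under the relaxed chain hypothesis. Recall that the $k=1$ proof constructs a map $\Phi: S_n \to \Ec(P)\times\Ec(Q)$ by identifying $\sigma\in S_n$ with a total order on $X$ and setting $\Phi(\sigma) = (f_P(\sigma), f_Q(\sigma))$, where $f_P(\sigma)$ is the greedy linear extension of $P$ that at each step~$i$ labels the $\sigma$-earliest $P$-minimal element not yet labeled, and $f_Q(\sigma)$ is defined symmetrically. The hypothesis $|C\cap C'|\le 1$ is used precisely to recover $\sigma$ uniquely from $(f_P(\sigma), f_Q(\sigma))$, which yields injectivity and hence $e(P)\ts e(Q)\ge n!$.

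For the generalization, I would use the same map $\Phi$ and verify that, under $|C\cap C'|\le k$, every pair $(f,g)\in\Ec(P)\times\Ec(Q)$ has at most $k^{n-k}\cdot k!$ preimages. Summing over $(f,g)$ then gives
\begin{equation*}
n! \ = \ \sum_{(f,g)} \bigl|\Phi^{-1}(f,g)\bigr| \ \le \ k^{n-k}\cdot k!\cdot e(P)\,e(Q),
\end{equation*}
which rearranges to~\eqref{eq:sid-gen}. The fiber $\Phi^{-1}(f,g)$ admits an explicit description as the set of total orders $\sigma$ such that, for every step~$i$ of the greedy procedure, $f^{-1}(i)$ is $\sigma$-earliest among the $P$-minimal elements then available and $g^{-1}(i)$ is $\sigma$-earliest among the $Q$-minimal elements then available. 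These ``first-in-$\sigma$'' constraints together define an auxiliary poset $R$ on $X$ whose linear extensions are exactly the fiber, so the task reduces to bounding $e(R)\le k^{n-k}\cdot k!$.

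To bound $e(R)$, I would build $\sigma$ one position at a time. At each step the candidates for the next element of $\sigma$ form the intersection of a chain in $P$ and a chain in $Q$ arising from the algorithmic histories of $f$ and $g$, so by the $k$-chain hypothesis there are at most $k$ choices. The crude per-step bound gives $k^n$; the improvement to $k^{n-k}\cdot k!$ comes from observing that the last $k$ elements of $\sigma$ must form a permutation of a fixed $k$-element residual block, contributing at most $k!$ rather than $k^k$. The main obstacle is making this top-block argument rigorous: neither the $P$-induced constraints alone nor the $Q$-induced constraints alone yield the sharp bound, so one must exploit how the $k$-chain hypothesis couples the two greedy reconstructions simultaneously, and correctly identify the terminal block on which the $k!$ factor properly replaces~$k^k$.
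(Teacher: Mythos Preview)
Your plan has a genuine gap, and the paper's argument is quite different.

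The gap: neither of your two load-bearing claims is actually established. First, the assertion that the candidates for the next position of $\sigma$ form an intersection of a $P$-chain and a $Q$-chain ``arising from the algorithmic histories'' is not argued; for the greedy map you describe, the constraints on $\sigma$ coming from $f_P(\sigma)=f$ are of the form ``$f^{-1}(i)$ is $\sigma$-earlier than every other element that was $P$-minimal at step $i$,'' and it is not clear these cut the candidate set for $\sigma^{-1}(j)$ down to a $P$-chain. Second, you yourself flag the $k^k\to k!$ improvement as the ``main obstacle'' and do not resolve it; your suggestion that the last $k$ positions of $\sigma$ form a permutation of a \emph{fixed} $k$-element block is not justified and is not obviously true. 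Without these two pieces the fiber bound, and hence the whole argument, is incomplete.

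The paper's proof avoids both difficulties by never analyzing global fibers at all. It observes, via Lemma~\ref{lem:DFS}, that
\[
\sum_{y\in X} e(P-y)\,e(Q-y) \;=\; \sum_{f\in\Ec(P)}\ \sum_{g\in\Ec(Q)} \bigl|\Phi(f)\cap\Phi(g)\bigr| \;\le\; k\,e(P)\,e(Q),
\]
since each $\Phi(f)$ is a chain in $P$ and each $\Phi(g)$ is a chain in $Q$. One then inducts on $n\ge k$: the base $n=k$ is the trivial bound $e(P)e(Q)\ge 1 = k!/(k^{0}k!)$, and for $n>k$ the inductive hypothesis gives $e(P-y)e(Q-y)\ge (n-1)!/(k^{n-1-k}k!)$ for every $y$, so summing over $y$ and dividing by $k$ yields \eqref{eq:sid-gen}. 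In this approach the factor $k^{n-k}$ is exactly the $k$ lost at each of the $n-k$ inductive steps, and the $k!$ comes for free from the base case --- no ``top-block'' analysis is needed.
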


\smallskip

The proof, examples and applications of this result are given in Section~\ref{s:sid}.

\smallskip

\subsection{Generalized Stanley inequality}\label{ss:intro-Sta}
We start with the following inspiring \defn{Stanley inequality}:

\smallskip

\begin{thm}[{\rm Stanley~\cite{Sta-AF}}{}]\label{t:Sta}
Let \. $P=(X,\prec)$ \. be a poset on \. $|X|=n$ \. elements.  For
an element \. $x\in X$ \. and integer \. $1\le a \le n$, let \.
$\Ec(P,x,a)$ \. be the set of linear extensions \. $f\in \Ec(P)$ \. such
that \. $f(x)=a$.  Denote by \. $\aNr(P, x,a):=\bigl|\Ec(P, x,a)\bigr|$ \.
the number of such linear extensions.   Then:
\begin{equation}\label{eq:Sta}
\aNr(P, x,a)^2 \, \ge \, \aNr(P, x,a+1) \.\cdot \.  \aNr(P, x,a-1).
\end{equation}
\end{thm}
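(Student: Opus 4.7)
The plan is to follow Stanley's original geometric argument from~\cite{Sta-AF}, based on the Alexandrov--Fenchel inequality for mixed volumes. The central object is the \emph{order polytope}
$$
\Oc(P) \ := \ \bigl\{f \. : \. X \to [0,1] \, \ts\big|\ts\, f(u) \le f(v) \ \text{whenever} \ u \prec v \bigr\} \ \subset \ \Rb^X.
$$
For the fixed element $x\in X$, I would slice this polytope at height $t$:
$$
\Oc(P)_t \ := \ \bigl\{f \in \Oc(P) \. : \. f(x) = t\bigr\}.
$$
Using the standard decomposition of $\Oc(P)$ into the $n!$ simplices $R_\si$ indexed by linear extensions $\si$, together with the fact that the $a$-th order statistic of $n$ iid uniforms on $[0,1]$ has the Beta$(a,n-a+1)$ density, I would show that the $(n-1)$-dimensional volume of the slice is
$$
V(t) \ = \ \sum_{a=1}^{n} \frac{\aNr(P,x,a)}{(a-1)!\,(n-a)!} \ts t^{a-1}(1-t)^{n-a}.
$$
Thus the Bernstein coefficients of the polynomial $V(t)$ encode the quantities $\aNr(P,x,a)$ exactly.

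Next, I would express the slice as a Minkowski combination. Let $K := \Oc(P)_1$ and $L := \Oc(P)_0$, viewed as polytopes in the common ambient space $\Rb^{X \smallsetminus \{x\}}$. The claim is that
$$
\Oc(P)_t \ = \ t\ts K \. + \. (1-t)\ts L \qquad \text{for all} \ \ t \in [0,1].
$$
The inclusion $\supseteq$ is immediate, since order preservation is preserved under convex combinations, while boundary conditions $f|_{B(x)\smallsetminus\{x\}} \le t$ and $f|_{A(x)\smallsetminus\{x\}} \ge t$ are inherited additively from the definitions of $L$ and $K$ respectively. The reverse inclusion $\subseteq$ is the geometric crux: for $f \in \Oc(P)_t$ one must split each coordinate into a piece from $K$ and a piece from $L$ compatibly with the order relations. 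The natural choice rescales the values on the principal ideal below $x$ by $1/t$ to produce $l$, rescales the values on the principal filter above $x$ by $1/(1-t)$ (after shifting by $t$) to produce $k$, and distributes the values on the elements incomparable to $x$ between $k$ and $l$ so that both remain order preserving. Given this decomposition, the classical polynomial identity for the volume of a Minkowski combination reads
$$
V(t) \ = \ \sum_{i=0}^{n-1} \binom{n-1}{i}\, V\bigl(K[i], L[n-1-i]\bigr)\, t^i (1-t)^{n-1-i},
$$
where $V\bigl(K[i], L[n-1-i]\bigr)$ denotes the mixed volume with $i$ copies of $K$ and $n-1-i$ copies of $L$. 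Matching coefficients in the Bernstein basis yields the key identification
$$
\aNr(P,x,a) \ = \ (n-1)! \cdot V\bigl(K[a-1], L[n-a]\bigr).
$$

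Finally, the Alexandrov--Fenchel inequality
$$
V\bigl(K[a-1], L[n-a]\bigr)^2 \ \ge \ V\bigl(K[a], L[n-a-1]\bigr) \cdot V\bigl(K[a-2], L[n-a+1]\bigr)
$$
translates directly into~\eqref{eq:Sta} after multiplying through by $\bigl((n-1)!\bigr)^2$. The main obstacle is the Minkowski decomposition $\Oc(P)_t = tK + (1-t)L$: while intuitively natural and readily verified in small examples, a rigorous proof requires a careful case analysis of the order constraints coupling elements below $x$, above $x$, and incomparable to $x$. A secondary technical point is that $K$ and $L$ can fail to be full-dimensional in $\Rb^{X\smallsetminus\{x\}}$, so the Alexandrov--Fenchel inequality must either be invoked in a form valid for arbitrary convex bodies, or applied after a standard limiting argument with full-dimensional perturbations.
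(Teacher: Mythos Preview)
Your approach is precisely Stanley's original argument from~\cite{Sta-AF}, which the paper cites but does not reprove; the theorem is stated here only as background for the discussion of vanishing and uniqueness conditions in Section~\ref{s:restricted}. So there is no ``paper's own proof'' to compare against beyond the one-line attribution ``Stanley's original proof of this result is via reduction to the classical and very deep \emph{Alexandrov--Fenchel} (AF-) \emph{inequality} in convex geometry,'' and your proposal faithfully reconstructs that reduction.

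Two small remarks. First, your inequalities on $B(x)\smallsetminus\{x\}$ and $A(x)\smallsetminus\{x\}$ are reversed: in the paper's notation $B(x)$ is the \emph{upper} order ideal, so the correct boundary condition on the slice is $f|_{B(x)\smallsetminus\{x\}} \ge t$, not $\le t$. Second, the Minkowski identity $\Oc(P)_t = tK + (1-t)L$ does go through without a painful case analysis once you observe that both sides are cut out by the same system of linear inequalities: the order relations not involving $x$ are convex and hence preserved under Minkowski sums of order-preserving maps, while the relations $y \prec x$ and $x \prec z$ become the box constraints $f(y)\le t$ and $f(z)\ge t$, which decompose additively because in $K$ every $f(z)=1$ and in $L$ every $f(y)=0$. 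The degenerate-dimension issue you flag is genuine but standard; Stanley handles it by the usual perturbation argument, and the form of Alexandrov--Fenchel for arbitrary convex bodies suffices.
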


\smallskip

Stanley's original proof of this result is via reduction to the classical
and very deep \emph{Alexandrov--Fenchel} (AF-) \emph{inequality} in convex geometry.
While the latter has several proofs, see references in~\cite[$\S$7.1]{CP2},
none are elementary and direct even in the case of convex polytopes.
With the aim to prove \eqref{eq:Sta} by an elementary argument, the (somewhat
technical) proof in~\cite{CP} uses nothing but linear algebra.  Finding
a direct injective proof is a major open problem (see~$\S$\ref{ss:finrem-GP-injection}).

In the absence of an injective proof, the \defng{equality conditions} can become
\emph{more difficult} than the original inequality, cf.~$\S$\ref{ss:finrem-vanishing}.
This is famously the
case for  the AF-inequality and many of its consequences.  For the Stanley
inequality, the equality conditions were discovered recently by
Shenfeld and van Handel~\cite{SvH}, by a deep geometric argument.

Fortunately, part of the equality conditions called the
\defng{vanishing conditions}, are completely combinatorial.
Denote by \ts $\ell(x) := \bigl|\{y \in X\,{}:\,{}y\preccurlyeq x\}\bigr|$ \ts and \ts
$b(x) := \bigl|\{y \in X\,{}:\,{}y\succcurlyeq x\}\bigr|$ \ts the sizes of lower
and upper ideals of \ts $x\in X$, respectively.

\smallskip

\begin{thm}[{\rm Shenfeld and van Handel~\cite[Lemma~15.2]{SvH}}] \label{t:SvH-vanish}
Let \. $P=(X,\prec)$ \. be a poset on \. $|X|=n$ \. elements,
let \ts $x\in X$ \ts and \ts $1\le a \le n$.  Then \.
$\aNr(P, x,a)>0$ \. \underline{if and only if} \. $\ell(x)\le a$ \. and \. $b(x)\le n-a+1$.
\end{thm}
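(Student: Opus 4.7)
The plan is to treat the two directions separately. The ``only if'' direction will follow from a direct counting argument; the ``if'' direction will require an explicit construction of a linear extension sending $x$ to $a$.

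For the ``only if'' direction, suppose $f\in \Ec(P,x,a)$. Since every $y\preccurlyeq x$ satisfies $f(y)\le f(x)=a$, the $\ell(x)$ such elements fill distinct values in $\{1,\ldots,a\}$, forcing $\ell(x)\le a$. By the symmetric argument applied to $\succcurlyeq x$, we also get $b(x)\le n-a+1$.

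For the ``if'' direction, assume $\ell(x)\le a$ and $b(x)\le n-a+1$, and decompose
\[
L := \{y\in X : y\preccurlyeq x\}, \qquad U := \{y\in X : y\succcurlyeq x\}, \qquad M := X\setminus(L\cup U),
\]
so $L\cap U=\{x\}$ and $|M| = n-\ell(x)-b(x)+1\ge 0$. In any candidate linear extension, $L\setminus\{x\}$ must fill positions in $[a-1]$ and $U\setminus\{x\}$ must fill positions in $\{a+1,\ldots,n\}$, leaving $a-\ell(x)\ge 0$ slots below and $n-a-b(x)+1\ge 0$ slots above to be filled from $M$; these counts sum to $|M|$, so a partition $M=M_1\sqcup M_2$ of the correct sizes exists purely by cardinality.

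The critical step is to choose $M_1$ to be an \emph{order ideal} of the subposet induced on $M$, of size $a-\ell(x)$; such a down-set exists at every cardinality between $0$ and $|M|$, by prefixing any linear extension of the induced subposet. Given such $M_1$, I will define $f$ by concatenating an arbitrary linear extension of $(L\setminus\{x\})\cup M_1$ on $[a-1]$, assigning $f(x)=a$, and an arbitrary linear extension of $M_2\cup(U\setminus\{x\})$ on $\{a+1,\ldots,n\}$. The verification that $f\in\Ec(P,x,a)$ runs through the cases of a relation $y\prec z$: when $y,z$ lie in the same concatenation block, the internal linear extension handles it; when they straddle position $a$, the values $f(y)<a<f(z)$ handle it; and the ``backwards'' cross-cases ($U\setminus\{x\}$ to $L\setminus\{x\}$, $U\setminus\{x\}$ to $M$, and $M$ to $L\setminus\{x\}$) are ruled out by transitivity, using that $L\cap U=\{x\}$ and that elements of $M$ are incomparable to $x$. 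The main obstacle, and essentially the entire content of the construction, is the subcase $y,z\in M$ with $y\prec z$: here the down-set property of $M_1$ is exactly what forbids $y\in M_2, z\in M_1$, so $y$ and $z$ land in the same block and the internal linear extension takes over.
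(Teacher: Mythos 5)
Your proposal is correct, and it is essentially the argument the paper gives for the $k=1$ base case of Theorem~\ref{t:vanish}: necessity by counting the values forced into $[a]$ and $[a,n]$ by the lower and upper ideals of $x$, and sufficiency by listing the strict down-set of $x$ first, the strict up-set last, and distributing the incomparable elements around position~$a$. Your explicit choice of $M_1$ as an order ideal of the induced subposet on $M$ (a prefix of a linear extension of $M$) is exactly what the paper's construction $\al\ts\ga\ts\be$ with $x$ inserted at position $a$ does implicitly.
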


\smallskip

Note that by the Stanley inequality, if \. $\aN(P, x,a)=0$, then \. $\aN(P, x,a+ 1)=0$ or \.$\aN(P, x,a- 1)=0$ ,
so whenever the conditions in the theorem are not satisfied the equation~\eqref{eq:Sta} is
an equality.  We can now define the \ts \defn{generalized Stanley inequality}.

\smallskip

\begin{thm}[{\rm Stanley~\cite{Sta-AF}}{}]\label{t:Sta-gen}
Let \. $P=(X,\prec)$ \. be a poset on \. $|X|=n$ \. elements.
Fix elements \. $x,z_1,\ldots,z_k\in X$ \. and integers \. $a,c_1,\ldots,c_k\in [n]$;
we write \. $\bz =(z_1,\ldots,z_k)$ \. and \. $\bc =(c_1,\ldots,c_k)$.
Let \.
$\Ec_{\bz\ts\bct}(P,x,a)$ \. be the set of linear extensions \. $f\in \Ec(P)$ \. such
that \. $f(x)=a$ \. and \. $f(z_i)=c_i$, for all \. $1\le i \le k$.
Denote by \. $\aNr_{\bz\ts\bc}(P, x,a):=\bigl|\Ec_{\bz\ts\bc}(P,x,a)\bigr|$ \.
the number of such linear extensions. Then:
\begin{equation}\label{eq:Sta-gen}
\aNr_{\bz\ts\bc}(P, x,a)^2 \, \ge \, \aNr_{\bz\ts\bc}(P, x,a+1) \.\cdot \.  \aNr_{\bz\ts\bc}(P, x,a-1).
\end{equation}
\end{thm}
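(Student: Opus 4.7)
I plan to prove Theorem~\ref{t:Sta-gen} by extending Stanley's original Alexandrov--Fenchel-based argument for Theorem~\ref{t:Sta} to accommodate the additional positional constraints \ts $f(z_i)=c_i$. The basic obstacle is that these constraints are global and numerical in nature, so they cannot be absorbed into the poset structure via new order relations; the natural framework is instead geometric, through the order polytope.

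First, I would reformulate the constrained count geometrically. Recall the order polytope \ts $\mathcal{O}(P) = \{y \in [0,1]^X : y_u \le y_v \text{ for } u \prec v\}$, which has volume $e(P)/n!$ and decomposes into $e(P)$ unit simplices (one per linear extension). Slicing at $y_x = t$ yields a polytope whose volume is a polynomial in $t$ whose coefficients encode the $\aNr(P, x, a)$. For the constrained count, I would sort the $c_i$'s in increasing order $c_{i_1} < \ldots < c_{i_k}$ and additionally intersect with the hyperplanes $\{y_{z_{i_j}} = c_{i_j}/n\}$, obtaining a lower-dimensional face whose sliced volumes at $y_x = a/n$ recover $\aNr_{\bz\ts\bc}(P, x, a)$, up to the appropriate combinatorial normalizations.

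Second, I would express these sliced volumes as a mixed volume of a family of convex bodies associated with $P$, $x$, $\bz$, and $\bc$. Stanley's mixed-volume formula for $\aNr(P, x, a)$ uses essentially two families of polytopes (capturing the down-set and up-set of $x$), and the log-concavity in $a$ arises from Alexandrov--Fenchel applied to Minkowski combinations in those two families. The constrained version would use $k+2$ families of polytopes, with the $k$ extra families corresponding to the constraints $y_{z_i} = c_i/n$; applying Alexandrov--Fenchel in the $x$-direction to this expanded mixed volume yields the log-concavity~\eqref{eq:Sta-gen}.

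The main obstacle will be the careful setup of the generalized mixed-volume identity: one must verify that the additional constraint polytopes interact correctly with Stanley's two-family construction and that the AF hypotheses (convexity, correct dimensions, and the absence of degenerate directions) remain satisfied after the dimensional reduction imposed by the hyperplane slicing. The degenerate cases, where the right-hand side of~\eqref{eq:Sta-gen} vanishes, are handled by iterated applications of Theorem~\ref{t:SvH-vanish} to the elements $z_1,\ldots,z_k,x$, so the inequality holds trivially whenever $\aNr_{\bz\ts\bc}(P, x, a\pm 1)=0$.
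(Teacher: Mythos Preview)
The paper does not prove Theorem~\ref{t:Sta-gen}; it is stated with attribution to Stanley~\cite{Sta-AF}, and the paper's own contributions in this direction concern the vanishing and uniqueness conditions (Theorems~\ref{t:vanish} and~\ref{t:gen-Stanley-unique}) rather than the inequality itself. Your plan via the Alexandrov--Fenchel inequality is exactly Stanley's original method, so there is nothing in the present paper to compare against beyond noting that the authors explicitly describe Stanley's proof as ``via reduction to the classical and very deep Alexandrov--Fenchel inequality'' and regard finding a direct injective proof as a major open problem (see~\S\ref{ss:finrem-GP-injection}).

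A word of caution on the sketch itself: the passage from the sliced order polytope to a mixed-volume identity is the entire content of Stanley's argument, and your description of it is loose in ways that matter. Stanley does not intersect the order polytope with hyperplanes at fixed rational heights $c_i/n$ and then read off a mixed volume; he constructs explicit convex bodies (one for each interval between consecutive fixed positions $c_i$) and proves a combinatorial identity expressing $\aN_{\bz\bc}(P,x,a)$ as their mixed volume, with the multiplicities of exactly two of the bodies varying linearly in~$a$. Log-concavity in~$a$ then follows from a single application of AF. Your ``$k+2$ families'' formulation is not obviously the same object, and the slicing-at-heights heuristic does not by itself deliver the required identity. Finally, the appeal to Theorem~\ref{t:SvH-vanish} for degenerate cases is both unnecessary (the AF inequality already holds trivially when either factor on the right vanishes) and misdirected (the constrained vanishing result is Theorem~\ref{t:vanish}, not Theorem~\ref{t:SvH-vanish}).
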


\medskip

We can now state the \defng{vanishing conditions} for the generalized Stanley inequality.
Without loss of generality, we can assume that numbers~$\bc$ are in increasing order,
in which case we can assume that elements~$\bz$ form a chain (because $\aN_{\bz\ts\bc}$ only counts linear extensions for which $z_1 \prec \cdots \prec z_k$).
%
%
Let \ts $x,y\in X$ \ts be two poset elements such that \. $x\prec y$.
Define \. $h(x,y) \ts := \ts \#\{ z\in P, \text{ s.t. } x\prec z\prec y\}$.

\smallskip

\begin{thm}\label{t:vanish}
Let \. $P=(X,\prec)$ \. be a poset on \. $|X|=n$ \. elements.
Fix elements \. $u_1 \prec\ldots \prec u_k\in X$ \. and integers \. $1\le a_1 < \ldots < a_k \le n$;
we write \. $\uu =(u_1,\ldots,u_k)$ \. and \. $\ba =(a_1,\ldots,a_k)$.
Let \.
$\Ec(P,\uu, \aa)$ \. be the set of linear extensions \. $f\in \Ec(P)$ \. such
that \. $f(u_i)=a_i$, for all \. $1\le i \le k$.
Then \. $\bigl|\Ec(P, \uu, \aa)\bigr| > 0$ \, \underline{if and only if}
\begin{equation}\label{eq:Sta-gen-vanish}
\aligned
& \ell(u_i)\. \le \. a_i \.,  \quad b(u_i)\le n-a_i+1\,, \quad \  \text{for all} \quad 1\.\le \. i \. \le \. k\ts, \ \ \, \text{and} \\
& a_j \. - \. a_i \, > \, h(u_i,u_j) \quad \  \text{for all} \quad 1\.\le \. i \. < \. j \. \le \. k\ts.
\endaligned
\end{equation}
\end{thm}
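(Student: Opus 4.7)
The forward direction is by direct counting: if $f \in \Ec(P, \uu, \aa)$ is nonempty, then for each $i$ the $\ell(u_i)-1$ elements strictly below $u_i$ must occupy distinct positions in $[1, a_i-1]$, giving $\ell(u_i) \le a_i$; symmetrically, $b(u_i) \le n - a_i + 1$. For $i < j$, the $h(u_i, u_j)$ elements strictly between $u_i$ and $u_j$ in $P$ must occupy distinct positions in $(a_i, a_j)$, so $a_j - a_i > h(u_i, u_j)$.

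For the converse, the plan is to reformulate the problem as an interval assignment solvable by Hall's theorem, and then upgrade the resulting assignment to one respecting $\prec$ by a lex-minimality argument. Set $a_0 := 0$, $a_{k+1} := n+1$, $Y := X \setminus \{u_1, \ldots, u_k\}$, and for $0 \le i \le k$ introduce slots $S_i := (a_i, a_{i+1})$ of capacity $s_i := a_{i+1} - a_i - 1$; note $\sum_i s_i = n-k = |Y|$. For $y \in Y$, define the allowed-slot range by
\[
\lambda(y) := \max\bigl(\{0\} \cup \{j : u_j \prec y\}\bigr), \qquad \mu(y) := \min\bigl(\{k+1\} \cup \{j : y \prec u_j\}\bigr) - 1,
\]
and observe that both $\lambda$ and $\mu$ are weakly monotone along $\prec$. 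A linear extension $f \in \Ec(P, \uu, \aa)$ is equivalent to a slot assignment $\phi : Y \to \{0, \ldots, k\}$ satisfying (i)~$\phi(y) \in [\lambda(y), \mu(y)]$, (ii)~$|\phi^{-1}(i)| = s_i$ for each $i$, and (iii)~$y \prec y' \Rightarrow \phi(y) \le \phi(y')$; given such $\phi$, I recover $f$ by placing $u_j$ at $a_j$ and filling each $S_i$ with any linear extension of the induced subposet on $\phi^{-1}(i)$.

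Ignoring (iii) for the moment, Hall's theorem applied to the bipartite interval graph on $Y \sqcup \{0, \ldots, k\}$ yields that (i)+(ii) is feasible if and only if
\[
\bigl|\{y \in Y : [\lambda(y), \mu(y)] \subseteq [i, j]\}\bigr| \,\le\, s_i + \cdots + s_j \quad\text{for every subinterval } [i, j] \subseteq \{0, \ldots, k\},
\]
the restriction to subintervals being justified because each $[\lambda(y), \mu(y)]$ is itself an interval. A short case split on the endpoints---boundary intervals $[0, j]$ and $[i, k]$ versus interior intervals---identifies this system precisely with the three families of conditions in~\eqref{eq:Sta-gen-vanish}.

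The main obstacle will be the last step, upgrading a feasible $\phi$ to one respecting~$\prec$, since naive local swaps can introduce new inversions with third elements. I plan to circumvent this by lex-minimality: fix a linear extension $\sigma$ of the induced subposet on $Y$, and choose $\phi$ to minimize the tuple $(\phi(\sigma^{-1}(1)), \phi(\sigma^{-1}(2)), \ldots)$ lexicographically among feasible assignments. If $y_1 \prec y_2$ with $\phi(y_1) > \phi(y_2)$, the monotonicity of $\lambda, \mu$ yields
\[
\lambda(y_1) \le \lambda(y_2) \le \phi(y_2) < \phi(y_1) \le \mu(y_1) \le \mu(y_2),
\]
so the swap $\phi(y_1) \leftrightarrow \phi(y_2)$ produces another feasible $\phi'$. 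Since $\sigma(y_1) < \sigma(y_2)$ and $\phi'(y_1) = \phi(y_2) < \phi(y_1)$, this $\phi'$ is strictly smaller than $\phi$ in the lex order, contradicting minimality. Hence the lex-minimum $\phi$ satisfies (iii), and the associated linear extension establishes $\bigl|\Ec(P, \uu, \aa)\bigr| > 0$.
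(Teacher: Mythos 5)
Your proof is correct, but it takes a genuinely different route from the paper's. The paper proves the ``if'' direction in \S\ref{ss:restricted-vanish} by induction on $k$, using the promotion/demotion operators $\delta_{i\,j}$ and the group action developed in Section~\ref{s:restricted}: starting from a linear extension fixing $u_1,\ldots,u_k$, the next element $u_{k+1}$ is shuffled one position at a time toward $a_{k+1}$ without disturbing the already-fixed elements. You instead reduce the question to a transversal problem: assign the elements of $Y=X\setminus\{u_1,\ldots,u_k\}$ to the gaps between consecutive values $a_i$, note that each element's admissible gaps form an interval $[\lambda(y),\mu(y)]$ with $\lambda,\mu$ weakly monotone along $\prec$, apply the capacitated Hall theorem (for which, since all constraint sets are intervals, it suffices to test intervals of slots), and then repair order violations by the lex-minimal exchange argument, which is sound as stated. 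The endpoint case split you leave implicit does check out; the one bookkeeping point worth writing down is that $h(u_i,u_{j+1})$, $\ell(u_{j+1})$ and $b(u_i)$ also count the intermediate fixed elements $u_r$, and this exactly offsets the $j-i$ units of capacity occupied by those $u_r$, so the interval-Hall conditions coincide verbatim with \eqref{eq:Sta-gen-vanish}. As for what each approach buys: the paper's argument is wired into the group-action framework that it reuses for transitivity (Theorem~\ref{t:LE-gen-transitive}) and for the uniqueness conditions (Theorem~\ref{t:gen-Stanley-unique}), and it immediately yields the constructive, polynomial-time content of Corollary~\ref{cor:Sta-gen-vanish-poly}; your argument is self-contained and more elementary, makes transparent that \eqref{eq:Sta-gen-vanish} is precisely an interval feasibility (Hall/flow) condition, and is still effectively constructive, since the matching is computable in polynomial time and each exchange strictly increases the integer potential $\sum_{y}\sigma(y)\,\phi(y)$, so only polynomially many swaps are needed.
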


\smallskip

In Theorem~\ref{t:gen-Stanley-unique}, we also prove the \defng{uniqueness conditions}
for the problem, i.e.\ necessary and sufficient conditions for \.
$\bigl|\Ec(P, \uu, \aa)\bigr| = 1$.  We postpone the statement until~$\S$\ref{ss:restricted-unique}.

\smallskip

\subsection{Complexity implications}\label{ss:intro-CS}
We assume the reader is familiar with basic \defna{Computational Complexity},
and refer to standard textbooks \cite{AB,MM,Pap} for definitions and notation.
Here we follow the approach to inequalities proposed by the second author 
\cite{Pak,Pak-OPAC}. 

\smallskip

Recall the \defng{counting complexity class} \. $\SP$ \. of functions
which count the number of objects whose membership is decided in
polynomial time.  Let \. $\GapP=\SP-\SP$ \. be the closure of~$\SP$
under subtraction, see e.g.~\cite{For}.  Finally,
let \ts $\GapP_{\ge 0} := \GapP \cap \{u\ge 0\}$.

Clearly, \. $\SP\subseteq \GapP_{\ge 0}$\., but it remains open whether this
inclusion is proper.  For example, the \emph{Kronecker coefficients} \ts
$g(\la,\mu,\nu) \in \GapP_{\ge 0}$. It is not known whether $g(\cdot)\in \SP$,
and this remains a major open problem in Algebraic Combinatorics,
see e.g.~\cite{PP}.

\smallskip

As before, let \. $P = (X,\prec)$ \. be a poset on \ts $n$ \ts elements.
Clearly, the function \ts $e: P\to e(P)$ \ts is in $\SP$, and is famously
\ts $\SP$-complete~\cite{BW}. In fact, the function \ts $e(\cdot)$ \ts is \ts $\SP$-complete even
when restricted to permutation posets  \ts $P_\si$, $\si \in S_n$ and posets
of height two, see~\cite{DP}.  Define
\begin{equation}\label{eq:HP-function}
\xi(P) \, := \, e(P) \.\cdot\. \prod_{x \in X} b(x) \, -  \,  \. n!
\end{equation}
Observe that \. $\xi\in \GapP_{\ge 0}$ \. by the definition and
the  Bj\"orner--Wachs inequality~\eqref{eq:HP}.  In fact, the original 
injective proof of~\eqref{eq:HP} easily implies the following
effective version of the inequality:

\begin{thm} \label{t:HP-SP}
The function \. $\xi: P \to \nn$ \. defined by~\eqref{eq:HP-function} \ts is in~$\ts\SP$.
\end{thm}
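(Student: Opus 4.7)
The plan is to promote the Björner--Wachs injective proof of~\eqref{eq:HP} into a polynomial-time predicate. Set
\[
T(P) \, := \, \bigl\{\, \mathbf{y} = (y_x)_{x\in X} \, : \, y_x \in B(x)\text{ for all }x\in X\,\bigr\},
\]
so that $|T(P)| = \prod_x b(x)$ and membership in $T(P)$ is clearly decidable in polynomial time. The Björner--Wachs argument, to be recalled at the start of this section, exhibits an explicit injection
\[
\Phi\colon S_n \,\hookrightarrow\, \Ec(P)\times T(P),
\]
whose image has exactly $n!$ elements. Consequently $\xi(P) = \bigl|(\Ec(P)\times T(P))\setminus \mathrm{Im}(\Phi)\bigr|$, and placing $\xi$ in $\SP$ reduces to exhibiting a polynomial-time test for membership in $\mathrm{Im}(\Phi)$.

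To obtain such a test I would first verify that $\Phi$ is itself polynomial-time computable: the Björner--Wachs construction processes $\sigma \in S_n$ in $n$ greedy rounds, each performing only $O(n^2)$ poset comparisons to update a partial linear extension and a pointer tuple, so this is immediate from the definition. More substantively, I would then run the Björner--Wachs construction in reverse on a candidate pair $(f,\mathbf{y})$: since $y_x$ records the element paired with $x$ at the stage when $x$ was inserted into the linear extension, reading the pointers in the order prescribed by $f$ determines an alleged preimage $\sigma(1),\ldots,\sigma(n)$ uniquely, subject to a consistency check at each step. Either all checks succeed and we recover the unique $\sigma\in\Phi^{-1}(f,\mathbf{y})$, or some check fails, certifying $(f,\mathbf{y})\notin \mathrm{Im}(\Phi)$. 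Combining the two directions, $\xi(P)$ counts the pairs $(f,\mathbf{y})\in \Ec(P)\times T(P)$ satisfying a polynomial-time predicate, hence $\xi\in\SP$.

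The main obstacle is purely expository: one must extract enough invariants from the greedy construction to guarantee that the reverse procedure either outputs a valid $\sigma$ or unambiguously rejects, with no branching or search. I do not expect any new combinatorial ingredient beyond a careful re-reading of~\cite{BW89}; the content of the theorem is essentially that the Björner--Wachs injection is algorithmic in both directions, and therefore its image is a polynomial-time-decidable subset of $\Ec(P)\times T(P)$.
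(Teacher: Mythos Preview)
Your proposal is correct and follows essentially the same route as the paper: the paper defines the inverse map $\Psi$ via promotion, notes that both $\Phi$ and $\Psi$ run in polynomial time, and then characterizes $\xi(P)$ as the number of pairs $(f,g)\in\Ec(P)\times\cB(P)$ with $\Phi(\Psi(f,g))\ne(f,g)$. Your ``reverse procedure with consistency checks'' is exactly this $\Psi$-then-$\Phi$ test, so nothing beyond the Bj\"orner--Wachs construction is needed, as you anticipated.
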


\smallskip

Similarly, define \. $\ze: \ts P\times \nn \ts \to \nn$
\begin{equation}\label{eq:OP-function}
\ze(P,t) \, := \, \Omega(P,t) \. n! \, -  \,  e(P) \. t^n\..
\end{equation}
We can now give an effective version of~\eqref{eq:OP-gen}:

\smallskip

\begin{thm} \label{t:OP-SP}
The function \. $\ze: P \to \nn$ \. defined by~\eqref{eq:OP-function} \ts is in~$\ts\SP$.
\end{thm}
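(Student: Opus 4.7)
The plan is to make the direct injection used in the proof of Theorem~\ref{thm:order_le} algorithmically effective; its polynomial-time invertibility will realize $\ze(P,t)$ as the cardinality of a polynomial-time decidable certificate set.

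Specifically, the proof of Theorem~\ref{thm:order_le} constructs an explicit injection
\[
\phi \. : \. \Ec(P) \times [t]^X \, \hookrightarrow \, \bigl\{(g,\sigma) \,:\, g \colon X \to [t] \text{ order-preserving},\, \sigma \in S_n \bigr\}\ts,
\]
whose domain has cardinality $e(P) \ts t^n$ and codomain has cardinality $\Omega(P,t) \ts n!$. The construction of $\phi$ is a transparent step-by-step combinatorial procedure, essentially a canonical sort of the sequence $h(f^{-1}(1)),\ldots,h(f^{-1}(n))$ along a linear extension $f$, together with a permutation in $S_n$ that bookkeeps the sorting data. Both $\phi$ and its partial inverse $\phi^{-1}: \operatorname{Im}(\phi) \to \Ec(P) \times [t]^X$ are therefore computable in polynomial time in the bit-length of the input.

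Define the certificate set
\[
\mathcal{Z}(P,t) \, := \, \bigl\{ (g,\sigma) \,:\, g \colon X \to [t] \text{ order-preserving},\, \sigma \in S_n,\, (g,\sigma) \notin \operatorname{Im}(\phi) \bigr\}\ts.
\]
Every element has a polynomial-size encoding, and membership in $\mathcal{Z}(P,t)$ is decidable in polynomial time: verify that $g$ is order-preserving by checking the $O(n^2)$ cover relations of $P$; verify that $\sigma \in S_n$; and verify that $(g,\sigma) \notin \operatorname{Im}(\phi)$ by running $\phi^{-1}$ on $(g,\sigma)$ and checking whether its output is a valid pair $(f,h) \in \Ec(P) \times [t]^X$ with $\phi(f,h)=(g,\sigma)$. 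By the injectivity of $\phi$, we obtain
\[
|\mathcal{Z}(P,t)| \, = \, \Omega(P,t) \ts n! \, - \, e(P) \ts t^n \, = \, \ze(P,t)\ts,
\]
which places $\ze \in \SP$.

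The main technical point to verify is the polynomial-time invertibility of $\phi$. This is not a substantive obstacle: the forward procedure defining $\phi$ consists of a polynomial number of elementary operations (sorts, lookups, comparisons) whose effects are all recorded in the output permutation $\sigma$, so reversing these operations yields a polynomial-time algorithm for $\phi^{-1}$ on its domain of definition. This completes the plan for Theorem~\ref{t:OP-SP}.
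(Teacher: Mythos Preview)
Your proposal is correct and follows essentially the same approach as the paper: both use the explicit injection $\Psi$ from the proof of Theorem~\ref{thm:order_le}, take the certificate set to be the complement of its image in $\BOm(P,t)\times S_n$, and verify membership by applying the polynomial-time inverse map $\Psi^{-1}$ and checking whether the resulting $h$ is a linear extension (equivalently, whether $\Psi$ sends the output back to $(g,\sigma)$). The paper's only simplification is the observation that $\Psi^{-1}$ is globally defined and that the test reduces to checking $h\in\Ec(P)$.
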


For every $\si\in S_n$, let \ts $\eta: S_n \to \zz$ \ts be defined
as follows:
\begin{equation}\label{eq:Sid-function}
\eta(\si)\, := \, e(P_\si) \. e\bigl(P_{\ov{\si}}\bigr)  \, -  \,  \. n!
\end{equation}
Observe that \. $\eta \in \GapP_{\ge 0}$ \. by the definition and the
Sidorenko inequality~\eqref{eq:sid}. In fact, our injective
proof of~\eqref{eq:sid} can be used to obtain the following result:

\smallskip

\begin{thm} \label{t:sid-SP}
The function \. $\eta: S_n \to \nn$ \. defined by~\eqref{eq:Sid-function} \ts is in~$\ts\SP$.
\end{thm}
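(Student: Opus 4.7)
The plan is to follow the same template used to establish Theorem~\ref{t:HP-SP}: realize $\eta(\si)$ as the cardinality of a polynomial-time decidable set, obtained as the complement of the image of an explicit injection certifying the Sidorenko inequality. First I would observe that the product $\Ec(P_\si) \times \Ec(P_{\ov\si})$ lies in $\SP$, since given a pair of bijections $(f,g)\colon [n]\to [n]$ one can verify in time $O(n^2)$ that $f$ is a linear extension of $P_\si$ and $g$ is one of $P_{\ov\si}$ by checking all defining relations; its cardinality is $e(P_\si)\cdot e(P_{\ov\si})$.

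Next, I would invoke the explicit injection
\[
\Phi_\si \colon \ S_n \ \hookrightarrow \ \Ec(P_\si) \times \Ec(P_{\ov\si})
\]
constructed in our combinatorial proof of Theorem~\ref{t:sid} in Section~\ref{s:sid}. The two properties I need from this construction are: (a) given $\tau$, the pair $\Phi_\si(\tau) = (f_\tau, g_\tau)$ is computable in $\mathrm{poly}(n)$ time; and (b) given an arbitrary pair $(f,g)\in \Ec(P_\si) \times \Ec(P_{\ov\si})$ one can decide in $\mathrm{poly}(n)$ time whether $(f,g)\in \mathrm{Im}(\Phi_\si)$, and recover $\tau$ when it is. Granting both, the set
\[
S(\si) \ := \ \bigl( \Ec(P_\si) \times \Ec(P_{\ov\si}) \bigr) \. \setminus \. \mathrm{Im}(\Phi_\si)
\]
has polynomial-time decidable membership, hence belongs to $\SP$, and since $\Phi_\si$ is an injection, $|S(\si)| \ts = \ts e(P_\si)\cdot e(P_{\ov\si}) - n! \ts = \ts \eta(\si)$, yielding $\eta \in \SP$.

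The main obstacle will be property (b): producing a local, polynomial-time inverse test so that recognition of $\mathrm{Im}(\Phi_\si)$ does not amount to searching over all of $S_n$. I expect this to follow automatically from the inductive, step-by-step nature of the construction of $\Phi_\si$ (and from the parallel construction of Gaetz--Gao~\cite{GG1} to which ours is comparable): one runs the algorithm defining $\Phi_\si$ in reverse, attempting at each stage to reconstruct the next symbol of $\tau$ from the evolving pair $(f,g)$, and accepting exactly when the reconstruction terminates with a permutation whose forward image under $\Phi_\si$ is $(f,g)$. All the work therefore reduces to packaging the combinatorial proof of Sidorenko's inequality algorithmically; no new inequality is needed.
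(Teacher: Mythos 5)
You are following the paper's route exactly: realize $\eta(\si)$ as the number of pairs $(f,g)\in \Ec(P_\si)\times \Ec\bigl(P_{\ov{\si}}\bigr)$ lying outside the image of the injection implicit in the injective proof of Theorem~\ref{t:sid}, note the forward map is computable in polynomial time, and conclude $\SP$-membership once membership in the image is also decidable in polynomial time. The difficulty is that the one step you defer --- your property (b) --- is the entire content of the theorem, and ``I expect this to follow automatically'' is not an argument; the paper explicitly stresses that this statement is surprisingly nontrivial given the many other proofs of the inequality, which is exactly why Section~\ref{s:sid} builds an injection whose image admits a local recognition test (and why, e.g., the Gaetz--Gao surjection is not usable off the shelf: its correctness proof cannot be easily inverted). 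Moreover, your reverse-run-plus-final-forward-check only gives soundness as stated; for completeness you must show that whenever $(f,g)$ does lie in the image, a deterministic backward pass cannot go astray at any stage and wrongly reject.

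What has to be supplied (and is, in the paper's proof) is the following. At each stage of the recursion the branch element is recoverable from the current pair because the promotion chain of one coordinate is a chain in one poset, hence an antichain in the other, so it meets the promotion chain of the other coordinate in at most one element; this is the same \ $\bigl|C \cap C'\bigr|\le 1$ \ condition that yields injectivity, and it is what forces each reverse step to be unique. The stage inverse itself is computable in polynomial time because the inverse of the construction of Lemma~\ref{lem:DFS} on a poset is the same construction on the dual poset $P^\ast$. Finally, deciding whether the inverse exists at a given stage is in $\poly$: the paper casts this as testing whether a given antichain in $Q_i = P_{\ov{\si_i}}$ is a cut, i.e.\ meets every chain, a directed-connectivity question. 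Once you make these three points explicit your outline closes and coincides with the paper's proof of Theorem~\ref{t:sid-SP}; without them the proposal identifies the correct architecture (the analogue of Theorem~\ref{t:HP-SP}) but does not yet prove the theorem.
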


\smallskip

For the vanishing conditions of the generalized Stanley inequality, the implications
are completely straightforward:

\smallskip

\begin{cor}  \label{cor:Sta-gen-vanish-poly}
In the conditions of Theorem~\ref{t:vanish}, deciding
whether \. $\bigl|\Ec(P,\uu,\aa)\bigr| > 0$ \.\ts is in $\ts\poly$.
Moreover, when \. $\bigl|\Ec(P,\uu,\aa)\bigr| > 0$, a linear
extension \. $f\in \Ec(P,\uu, \aa)$ \. can be found in polynomial time.
\end{cor}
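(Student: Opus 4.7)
The first assertion is immediate from Theorem~\ref{t:vanish}: compute the transitive closure of the Hasse diagram of~$P$ in $O(n^3)$ time by Floyd--Warshall, read off $\ell(u_i)$, $b(u_i)$ and $h(u_i,u_j)$ as counts of comparable pairs in $O(n^2)$ further steps, and verify the $O(k^2)$ inequalities of~\eqref{eq:Sta-gen-vanish} in constant time each.

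For the second assertion, assume~\eqref{eq:Sta-gen-vanish} holds. My plan is to construct $f\in \Ec(P,\uu,\aa)$ by a slot-filling procedure. Partition $[n]$ into \emph{slots} $I_j := [a_j+1,\.a_{j+1}-1]$ for $0\le j\le k$, where $a_0:=0$ and $a_{k+1}:=n+1$, so $|I_j|=a_{j+1}-a_j-1$. For each $x\in X\setminus\{u_1,\ldots,u_k\}$, define the \emph{valid slot interval}
\[
\alpha(x) \. := \. \max\bigl(\{0\}\cup\{i \ts : \ts u_i\prec x\}\bigr)\ts, \qquad \beta(x) \. := \. \min\bigl(\{k+1\}\cup\{i \ts : \ts x\prec u_i\}\bigr)-1,
\]
which is non-empty because $u_1\prec\cdots\prec u_k$ is a chain. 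A linear extension $f\in \Ec(P,\uu,\aa)$ amounts to an assignment $j(\cdot):X\setminus\{u_1,\ldots,u_k\}\to\{0,\ldots,k\}$ with $j(x)\in[\alpha(x),\beta(x)]$ that meets the capacities $|I_j|$ and respects the partial order; given any such assignment, ordering each slot by a linear extension of its induced subposet and inserting $u_i$ at position~$a_i$ yields~$f$.

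The transitivity $y\prec x\Rightarrow \alpha(y)\le\alpha(x)$ and $\beta(y)\le\beta(x)$ makes the valid intervals monotone along precedence chains, so any feasible assignment (respecting only intervals and capacities) can be rearranged into one respecting precedence by processing in topological order and placing each element in the earliest valid slot with residual capacity. Such a feasible assignment, if it exists, can be constructed in polynomial time by standard bipartite matching / flow techniques.

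The main obstacle is therefore to verify Hall's marriage condition: for every $0\le j_1\le j_2\le k$, the number of non-chain elements $x$ with $[\alpha(x),\beta(x)]\subseteq [j_1,j_2]$ does not exceed $\sum_{j=j_1}^{j_2}|I_j|$. That count equals $h(u_{j_1},u_{j_2+1})-(j_2-j_1)$ after adjoining virtual elements $u_0$ below and $u_{k+1}$ above everything (with the conventions $h(u_0,u_j):=\ell(u_j)-1$ and $h(u_i,u_{k+1}):=b(u_i)-1$), while the capacity equals $a_{j_2+1}-a_{j_1}-(j_2-j_1+1)$. The resulting inequality simplifies to $a_{j_2+1}-a_{j_1}>h(u_{j_1},u_{j_2+1})$, which is exactly the family~\eqref{eq:Sta-gen-vanish}: the boundary cases $j_1=0$ and $j_2=k$ recover $\ell(u_{j_2+1})\le a_{j_2+1}$ and $b(u_{j_1})\le n-a_{j_1}+1$ respectively, while the interior cases reproduce the chain-gap inequalities $a_j-a_i>h(u_i,u_j)$. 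Thus Hall's condition holds, a feasible assignment is found in polynomial time, and the desired $f\in\Ec(P,\uu,\aa)$ is output.
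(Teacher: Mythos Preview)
Your argument is correct in substance and takes a genuinely different route from the paper. The paper simply observes that the proof of Theorem~\ref{t:vanish} is already constructive: starting from an arbitrary linear extension of~$P$, it applies the promotion and demotion operators~$\delta_{ij}$ to slide the elements $u_i$ into their target positions $a_i$ one at a time, and this procedure runs in polynomial time. Your slot-filling approach avoids the group-action machinery of Section~\ref{s:restricted} entirely and instead reduces the construction to an interval-constrained assignment problem whose Hall condition is \emph{exactly} the system~\eqref{eq:Sta-gen-vanish}. This is an elegant observation: it makes the equivalence in Theorem~\ref{t:vanish} transparent (both directions at once), and yields a self-contained algorithm independent of the inductive argument in~\S\ref{ss:restricted-vanish}.

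One minor gap: the phrase ``processing in topological order and placing each element in the earliest valid slot with residual capacity'' does not quite work as written. If $a,b\in X$ are incomparable with intervals $[0,0]$ and $[0,1]$ respectively, and both slots have unit capacity, then the topological order $(b,a)$ makes greedy fail even though a feasible assignment exists. The standard fix is to process elements by increasing~$\beta(\cdot)$ (earliest-deadline-first), breaking ties by any topological order; then feasibility follows from Hall, and the monotonicity $y\prec x\Rightarrow\alpha(y)\le\alpha(x)$ forces $j(y)\le j(x)$ because all slots in $[\alpha(y),j(y)-1]\supseteq[\alpha(x),j(y)-1]$ are already full when $x$ is processed. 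Alternatively, first find any feasible assignment by flow and then repeatedly swap violating pairs $y\prec x$ with $j(y)>j(x)$; the same monotonicity shows each swap stays feasible, and the procedure terminates. Either way the running time is polynomial.
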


\smallskip

We conclude with a corollary of Theorem~\ref{t:gen-Stanley-unique}.

\smallskip

\begin{cor}  \label{cor:Sta-gen-uniqueness-poly}
In the conditions of Theorem~\ref{t:vanish}, deciding
whether \. $\bigl|\Ec(P,\uu, \aa)\bigr| = 1$ \.\ts is in $\ts\poly$.
\end{cor}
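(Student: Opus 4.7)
The plan is to reduce to the explicit characterization of uniqueness provided by Theorem~\ref{t:gen-Stanley-unique}, and then to show that every condition in that characterization is decidable in polynomial time in the input size.

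First, I would apply Corollary~\ref{cor:Sta-gen-vanish-poly} to decide whether $\bigl|\Ec(P,\uu,\aa)\bigr| \geq 1$. If the answer is ``no'', then $\bigl|\Ec(P,\uu,\aa)\bigr| = 0 \neq 1$ and we return ``no''. Otherwise the algorithm of Corollary~\ref{cor:Sta-gen-vanish-poly} has already produced an explicit witness $f_0 \in \Ec(P,\uu,\aa)$, so it remains to decide whether $f_0$ is the only such extension.

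Second, I would unpack the uniqueness criterion of Theorem~\ref{t:gen-Stanley-unique}. On structural grounds I expect it to take the following local form: adjoining formal sentinels $u_0$ and $u_{k+1}$ with positions $a_0 := 0$ and $a_{k+1} := n+1$, uniqueness is equivalent to the statement that (a) every non-constrained element $y \in X \setminus \{u_1,\ldots,u_k\}$ is forced into a single label interval $(a_i, a_{i+1})$, and (b) the sub-poset induced on the set of elements assigned to each such interval is a chain of length exactly $a_{i+1} - a_i - 1$. Both conditions are decidable in polynomial time: (b) is a standard test that a sub-poset of $P$ is totally ordered, and (a) reduces to reachability in the Hasse diagram of $P$ combined with the cardinality counts already supplied by the vanishing condition $a_j - a_i > h(u_i,u_j)$ from Theorem~\ref{t:vanish}.

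The main obstacle is aligning this algorithmic decomposition with the precise statement of Theorem~\ref{t:gen-Stanley-unique}: one must verify that the characterization proved there is genuinely of such ``local'' combinatorial nature and does not conceal implicit counts that would themselves be hard to evaluate, such as demanding equality between linear-extension counts of unrelated sub-posets. Assuming the characterization is local in this sense, the polynomial running time is immediate from the remarks above, and Corollary~\ref{cor:Sta-gen-uniqueness-poly} follows.
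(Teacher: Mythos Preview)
Your approach is the same as the paper's: apply Corollary~\ref{cor:Sta-gen-vanish-poly} to decide nonemptiness and produce a witness $f$, then verify the conditions of Theorem~\ref{t:gen-Stanley-unique} in polynomial time. Your speculated form of that theorem is not quite right --- condition~(1) there is indeed your~(b), but condition~(2) is not your~(a); it asserts the nonexistence of indices $1\le i\le j\le k$ with both $f^{-1}(a_i-1)$ and $f^{-1}(a_j+1)$ incomparable to every element of $f^{-1}[a_i,a_j]$ --- yet this too is an $O(k^2 n)$ check, so your hedged conclusion goes through once you read the actual statement.
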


\smallskip

\subsection{Structure of the paper}
The paper is written in a straightforward manner, as we devote different sections to
proofs of different results.  These proofs are completely independent and
largely self-contained.  We are hoping they will appeal to a diverse
readership.

\smallskip

We start with a short Section~\ref{s:notation}, where give some basic
notation used throughout the paper. 
In Section~\ref{s:injective}, we recall the original direct injective proof of the
Bj\"orner--Wachs inequality~\eqref{eq:HP} via direct injection 
(cf.\ Theorem~\ref{t:HP-SP}).  Here we introduce promotions of linear extensions,
a tool which will also be used later in the paper (Sections~\ref{s:restricted} and~\ref{s:sid}).

In a lengthy Section~\ref{s:OP-induction}, we use Shepp's lattice
to prove Theorem~\ref{thm:log-concave-intro} and other inequalities
for the order polynomial.  The second half of this section is
motivated by connection and applications to the \defng{Kahn--Saks Conjecture}
(Conjecture~\ref{conj:KS-mon}) and the \defng{Graham Conjecture}
(Theorem~\ref{conj:Graham}), as we prove special cases of both of them.

In the next Section~\ref{s:q-analogue}, we present
an elegant proof by Reiner of the Bj\"orner--Wachs inequality.  We then
prove a $q$-analogue of \defng{Shepp's inequality} for the $q$-analogue
of the order polynomial, by using the remarkable \defng{$q$-FKG inequality}
by Bj\"orner.  We continue with the general lower bound on the order
polynomial (Section~\ref{s:OP-inj}), and prove Theorem~\ref{thm:order_le}
by a direct injection.

In Section~\ref{s:restricted}, we prove the vanishing conditions
(Theorem~\ref{t:vanish}) and uniqueness conditions
(Theorem~\ref{t:gen-Stanley-unique}), from which
Corollaries~\ref{cor:Sta-gen-vanish-poly}
and~\ref{cor:Sta-gen-uniqueness-poly} easily follow.
Our proof is based on an algebraic approach of Coxeter group
action on linear extensions,
see~$\S$\ref{ss:finrem-algebraic} for some history of the subject.

In Section~\ref{s:sid}, we give an injective proof of the
Sidorenko inequality, and prove its extension Theorem~\ref{t:sid-gen}.
We then derive Theorem~\ref{t:sid-SP} which is surprisingly
nontrivial given the many other proofs of the inequality
(see~$\S$\ref{ss:finrem-sid}).  We conclude with Section~\ref{s:finrem}
containing lengthy historical remarks and open problems.

\medskip

\section{Basic definitions and notation}  \label{s:notation}
In a poset \ts $P=(X,\prec)$, elements \ts $x,y\in X$ \ts are called
\defn{parallel} or \defn{incomparable} if \ts $x\not\prec y$ \ts
and \ts $y \not \prec x$.  We write \. $x\parallel y$ \. in this case.
Element \ts $x\in X$ \ts is said to \defn{cover}
\ts $y\in X$, if \ts $y\prec x$ \ts and there are no elements \ts $z\in X$ \ts
such that \. $y\prec z \prec x$.

A \defn{chain} is a subset \ts $C\ssu X$ \ts of pairwise comparable elements.
The \defn{height} of poset \ts $P=(X,\prec)$ \ts is the maximum size of a chain.
An \defn{antichain} is a subset \ts $A\ssu X$ \ts of pairwise incomparable elements.
The \defn{width} of poset  \ts $P=(X,\prec)$ \ts is the size of the maximal antichain.

A \defn{dual poset} \ts is a poset \ts $P^\ast=(X,\prec^\ast)$, where
\ts $x\prec^\ast y$ \ts if and only if \ts $y \prec x$.

A \defn{disjoint sum} \ts $P+Q$ \ts of posets \ts $P=(X,\prec)$ \ts
and \ts $Q=(Y,\prec')$ \. is a poset on \ts $(X\cup Y,\prec^\di)$,
where the relation $\prec^\di$ coincides with $\prec$ and $\prec'$ on
$X$~and~$Y$, and \. $x\.\|\. y$ \. for all \ts $x\in X$, $y\in Y$.

A \defn{linear sum} \ts $P\oplus Q$ \ts of posets \ts $P=(X,\prec)$ \ts
and \ts $Q=(Y,\prec')$ \. is a poset on \ts $(X\cup Y,\prec^\di)$,
where the relation $\prec^\di$ coincides with $\prec$ and $\prec'$ on
$X$~and~$Y$, and \. $x\prec^\di y$ \. for all \ts $x\in X$, $y\in Y$.

A \defn{product} \ts $P\times Q$ \ts of posets \ts $P=(X,\prec)$ \ts
and \ts $Q=(Y,\prec^\ast)$ \. is a poset on \ts $(X\times Y,\prec^\di)$,
where the relation \. $(x,y) \preccurlyeq^\di (x',y')$  \. if and only if
\. $x \preccurlyeq x'$ \. and \. $y \preccurlyeq^\ast y'$,
for all \ts $x,x'\in X$ \ts and \ts $y,y'\in Y$.

Posets constructed from one-element posets by recursively taking
disjoint and linear sums are called \defn{series-parallel}.
Both \defn{$n$-chain} \ts $C_n$ \ts and \defn{$n$-antichain} \ts $A_n$
 \ts are examples of series-parallel posets.

For a subset \ts $Y\ssu X$, a \ts \defn{restriction} \ts of the
poset \ts $=(X,\prec)$ \ts to \ts $X\sm Y$ \ts is a subposet
\ts $(X\sm Y,\prec)$ \ts of~$P$, which we denote
by \ts $P\sm Y$ \ts and \ts $P|_{X\sm Y}$.

For a poset \ts $P=(X,\prec)$, a function \ts $f: X\to \rr$ \ts
is called \defn{$\prec$-increasing} \ts if \ts $f(x)\le f(y)$ \ts
for all \ts $x\preccurlyeq y\ts;$ such functions are also called
\defn{weakly order-preserving} in a different context.
The \ts \defn{$\prec$-decreasing} \ts functions
are defined analogously.

Throughout the paper we use \, $\nn=\{0,1,2,\ldots\}$, \.
$\pp=\nn_{\ge 1} =\{1,2,\ldots\}$ \. and \. $[n]=\{1,\ldots,n\}$.

\medskip

\section{Injective proof of the Bj\"orner--Wachs inequality}\label{s:injective}
%
%
In this short section we recap the original proof by Bj\"orner and Wachs.  We do
this both as a warmup and as a way to introduce some definitions and ideas
that will prove useful throughout the paper.  As a quick application, we
obtain the proof of Theorem~\ref{t:HP-SP}.  The reader well familiar
with~\cite{BW89} can skip this section.

\smallskip

Denote by \ts $\cS=\cS(P)$ \ts the set of all bijections
\. $\si: X\to [n]$, so that \. $\Ec(P) \subseteq \cS(P)$. Denote by \ts $\cB=\cB(P)$ \ts
the set of maps \. $g: X\to X$ \. such that \. $g(x)\succcurlyeq x$ \. for all \. $x\in X$.
The inequality~\eqref{eq:HP} can then be written as:
$$(\ast) \qquad
\bigl|\cS(P)\bigr| \, \le \, \bigl|\Ec(P)\bigr| \. \cdot \.  \bigl|\cB(P)\bigr| \..
$$
We prove \ts $(\ast)$ \ts by a direct injection \. $\Phi: \cS \to \Ec \times \cB$ \.
defined as follows.

We say that a bijection \ts $f: X\to [n]$ \ts is \defn{sorted} on a subset \ts $Y \subseteq X$,
if \. $f(x)< f(y)$ \. for all \. $x, \ts y\in Y$ \. such that \. $x\prec y$.
Fix a linear extension \ts $\al\in \Ec(P)$ and label the elements of $X$ naturally according to $\al$, so that $x_i = \al^{-1}(i)$.  For every \. $\si\in \cS$,
proceed with the following \defng{sorting algorithm} for the elements \.
$x_n,\ldots,x_1$ \. in this order.  At the $k$-th step,
take the element \. 
$x=x_{n-k+1}$ \. and let \. $f(x):=\si(x)$.
If \ts $\si(x)$ is the smallest of \ts $\{f(y), y\in B(x)\}$, do nothing.
Otherwise, start the \defn{demotion} of \ts $x$ \ts by  swapping
its value $f(x)$ with the smallest $f(x')$, where \. $x'\in B(x)$.
Repeat this with $x'$, etc., until all elements in \ts $B(x)$ \ts are sorted.
Let \ts $g(x):=y$ \ts be the element in $B(x)$ where $x$ is demoted to (i.e. $y$ is the largest element in $B(x)$ affected by the demotion).

At end of the sorting algorithm, we obtain a bijection
\ts $f: X\to [n]$ \ts that is sorted on the whole~$X$, i.e. $f \in \Ec(P)$.
We also obtain a map \ts $g\in \cB(P)$.  Define  \. $\Phi(\si):=(f,g)$.

\smallskip

\begin{prop}[{\cite{BW89}}]
\label{p:HP-injection}
The map \. $\Phi: \ts \cS(P) \. \to \. \Ec(P)\ts\times \ts\cB(P)$ \. defined above is an injection.
\end{prop}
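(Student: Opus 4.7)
Plan: Construct an explicit inverse of $\Phi$ by running the sorting algorithm in reverse. Given $(f,g)$ in the image of $\Phi$, process the elements in the order $x_1, x_2, \ldots, x_n$ (reversing the forward execution order $x_n, x_{n-1}, \ldots, x_1$), and at each stage undo the forward demotion for $x_i$ using $g(x_i)$ as the anchor. If the demotion chain can always be uniquely reconstructed from the current state of $f$ and the single datum $g(x_i)$, then $\si$ is determined by $(f,g)$ and $\Phi$ is injective.

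First I would record three structural invariants of the forward algorithm, all proved by induction on the step index. (a)~After the first $n-i$ forward steps, the state $f^{(n-i)}$ is a linear extension of the induced subposet on $\{x_{i+1}, \ldots, x_n\}$; in particular it is sorted on $B(x_i) \sm \{x_i\}$. (b)~The demotion chain $x_i = y_0 \prec y_1 \prec \ldots \prec y_m = g(x_i)$ is \emph{saturated}: each $y_j$ covers $y_{j-1}$ in $P$, because otherwise the sortedness in~(a) would produce a strictly smaller $f^{(n-i)}$-value at an intermediate element, contradicting the argmin choice that defined $y_j$. (c)~The demotion acts as a cyclic shift of values along this chain: $f^{(n-i+1)}(y_j) = f^{(n-i)}(y_{j+1})$ for $j < m$ and $f^{(n-i+1)}(y_m) = \si(x_i)$.

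The reversal reduces to a chain-reconstruction lemma: in the state $f^{(n-i+1)}$, the element $y_{j-1}$ is the \emph{unique} direct predecessor of $y_j$ in $B(x_i)$ of maximum current $f$-value. With this in hand, the reverse step for $x_i$ becomes purely local: start at $y_m = g(x_i)$, walk down to $y_0 = x_i$ by repeatedly moving to the max-value direct predecessor inside $B(x_i)$, and then undo the cyclic shift by pulling each value one step back along the reconstructed chain, declaring $\si(x_i)$ to be the value that returns to $x_i$. Iterating from $i=1$ to $n$ rebuilds $\si$ uniquely from $(f,g)$, proving injectivity.

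The main obstacle is the chain-reconstruction lemma, which splits into two cases for a direct predecessor $w \ne y_{j-1}$ of $y_j$ in $B(x_i)$. If $w = y_l$ lies on the chain, saturatedness~(b) forces $l = j-1$, contradicting $w \ne y_{j-1}$; hence every such $w$ is off-chain. If $w$ is off-chain, the $x_i$-demotion does not touch it, so $f^{(n-i+1)}(w) = f^{(n-i)}(w)$; invariant~(a) together with $w \prec y_j$ and injectivity of $f^{(n-i)}$ yield $f^{(n-i)}(w) < f^{(n-i)}(y_j) = f^{(n-i+1)}(y_{j-1})$, so $y_{j-1}$ is strictly the maximum. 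The technical heart is the inductive maintenance of (a), i.e.\ verifying that each demotion --- a cyclic shift along a saturated chain in $B(x_i)$ --- preserves sortedness on the growing upper set of processed elements, since one must compare the shifted values on the chain with the unchanged values on all off-chain elements of $B(x_i)$ and check the poset inequalities in every direction.
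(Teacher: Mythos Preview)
Your proposal is correct and follows essentially the same approach as the paper: both construct the inverse $\Psi$ by processing $x_1,\ldots,x_n$ and, at the step for $x_i$, promoting from $g(x_i)$ back down to $x_i$ by repeatedly passing to the element of maximal current value below. The paper states this tersely (``by the properties of promotion and demotion''), whereas you make explicit the invariants (sortedness of the processed upper set, saturatedness of the demotion chain, the cyclic-shift description) and the chain-reconstruction lemma that actually justify why the promotion path retraces the demotion path; this is exactly the content the paper leaves implicit.
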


\begin{proof}
Define the \emph{inverse construction} as follows. Proceed through the reverse order of the
elements \. 
$x_1,\ldots,x_n$.  At \ts $(n-k)$-th step,
take the element \. $x=x_{n-k}$
\. and let \. $y=g(x)$. At this step, the
bijection \. $f: X\to [n]$ \. is sorted on \. $B(x)$.

Start the \defn{promotion} of $y$ by swapping $f(y)$ with the maximal $f(y')$,
over all \. $x\preccurlyeq y' \prec y$, until eventually $f(y)$ is promoted to
the element~$x$.  Denote by \. $\si\in \cS(P)$ \. the result of this iterated promotion and
define a map \. $\Psi(f,g):=\si$.

Now observe that for all \. $\si\in \cS(P)$ \. we have \. $\Psi(\Phi(\si))=\si$, since
the map \. $\Psi$ \. retraces each step of $\Phi$ by the properties of promotion and demotion.
This implies that $\Phi$ is an injection and completes the proof of the claim.
\end{proof}

\smallskip

%

\begin{proof}[Proof of Theorem~\ref{t:HP-SP}]
Let \. $\cH(P)\subseteq \Ec(P) \times \cB(P)$ \. be the set of pairs \.
$(f,g)\in \Ec(P) \times \cB(P)$, such that \. $\Phi(\Psi(f,g))\ne (f,g)$.
By definition, \. $\bigl|\cH(P)\bigr| = \xi(P)$.
Since both $\Phi$ and $\Psi$ are computable in polynomial time, then
so is the membership in~$\cH(P)$.  This proves the result.
\end{proof}

\smallskip

A series-parallel poset \ts $P=(X,\prec)$ \ts is called an \defn{ordered forest}
if it is a disjoint union of rooted trees, where each tree is rooted it its unique minimal element.

\smallskip

\begin{prop}[{\cite{BW89}}]\label{p:HP-forest}
The Bj\"orner--Wachs inequality~\eqref{eq:HP} is an equality
\ts \underline{if and only if} \ts $P$ \ts is an ordered forest.
\end{prop}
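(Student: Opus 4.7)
The plan is to prove both directions by strong induction on $n=|X|$, splitting into cases based on connectivity of the Hasse diagram and the number of minimal elements of $P$. Throughout, write $\mathrm{HL}(P) := n!/\prod_{x\in X} b(x)$ for the RHS of~\eqref{eq:HP}.

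For the sufficiency (forest $\Rightarrow$ equality), I would handle two subcases. If $P = P_1 + P_2$ is a nontrivial disjoint sum, then $e(P) = \binom{n}{|P_1|}\, e(P_1)\, e(P_2)$ and $\mathrm{HL}(P) = \binom{n}{|P_1|}\, \mathrm{HL}(P_1)\, \mathrm{HL}(P_2)$ (using $b_P(x) = b_{P_i}(x)$ for $x\in P_i$), so induction on each factor yields equality. Otherwise $P$ is a single rooted tree with unique minimum $r$; every linear extension satisfies $f(r)=1$, giving $e(P)=e(P\setminus r)$, and $b(r)=n$ together with $b_P(x)=b_{P\setminus r}(x)$ for $x\neq r$ gives $\mathrm{HL}(P)=\mathrm{HL}(P\setminus r)$. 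Since $P\setminus r$ is again an ordered forest of smaller size, induction finishes this direction.

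For the necessity, I would prove the contrapositive by the same case split. The disconnected case reduces via the disjoint-sum formulas above: if some component of $P$ is not a forest, the inductive hypothesis gives strict inequality on that component, which propagates to $P$. If $P$ is connected with a unique minimum $r$, the reductions $e(P) = e(P\setminus r)$ and $\mathrm{HL}(P) = \mathrm{HL}(P\setminus r)$ combined with the equivalence ``$P$ is a tree iff $P\setminus r$ is a forest'' allow induction on $P\setminus r$ to close the argument.

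The main obstacle is the case where $P$ is connected with multiple minima $m_1,\dots,m_k$, $k\geq 2$ (so $P$ is not a forest). Here I would use the decomposition $e(P) = \sum_{i=1}^k e(P\setminus m_i)$ (each linear extension is uniquely recorded by which minimum receives the label~$1$), the inductive hypothesis $e(P\setminus m_i) \geq \mathrm{HL}(P\setminus m_i)$, and the identity $\mathrm{HL}(P\setminus m_i) = \mathrm{HL}(P)\cdot b(m_i)/n$ (which holds because removing a minimum alters $b(\cdot)$ only at that minimum itself). Combining these yields
\[
e(P) \,\geq\, \mathrm{HL}(P) \cdot \frac{\sum_{i=1}^k b(m_i)}{n}\,.
\]
To finish, I would verify $\sum_i b(m_i) > n$. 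Writing $\sum_i b(m_i) = \sum_{x\in X} \bigl|\{i : m_i \preccurlyeq x\}\bigr|$, each term is at least $1$; if every term were exactly $1$, then assigning each element its unique minimum-below would partition $X$ into blocks $\{x : m_i \preccurlyeq x\}$ such that every Hasse edge lies inside one block, contradicting Hasse-connectedness with $k\geq 2$ nonempty blocks. Hence $e(P) > \mathrm{HL}(P)$, completing the induction.
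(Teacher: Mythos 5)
Your proof is correct, but it takes a genuinely different route from the paper's. The paper proves the ``only if'' direction by exploiting its injective proof of~\eqref{eq:HP}: if $P$ is not an ordered forest, some element $z$ covers two incomparable elements $x,y$, and one exhibits an explicit pair $(f,g)\in\Ec(P)\times\cB(P)$ with $g(x)=g(y)=z$ on which the inverse procedure $\Psi$ fails, so the injection $\Phi$ of Proposition~\ref{p:HP-injection} is not surjective and the inequality is strict; the ``if'' direction is cited as the hook-length formula for trees. You instead run an induction on $n=|X|$ with a case split: multiplicativity of both sides under disjoint sums, root removal when there is a unique minimum $r$ (together with the easy but worth-stating fact that, $r$ being the unique minimum, deleting $r$ neither creates nor destroys covering relations among the remaining elements, so $P$ is an ordered tree iff $P\setminus r$ is an ordered forest), and, in the key case of a connected poset with minima $m_1,\dots,m_k$, $k\ge 2$, the recurrence $e(P)=\sum_i e(P\setminus m_i)$ (this is exactly the equality part of Corollary~\ref{cor:antichain}) combined with Theorem~\ref{t:HP} applied to each $P\setminus m_i$, the identity relating the hook products of $P$ and $P\setminus m_i$ via $b(m_i)/n$, and the estimate $\sum_i b(m_i)\ge n+1$ for a connected poset with at least two minima; all these steps check out (in the disconnected case you are also implicitly invoking Theorem~\ref{t:HP} for the untouched components, which is fine since it is already established). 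What each approach buys: yours is self-contained, avoids the injection machinery entirely, and in the several-minima case even yields the quantitative strengthening $e(P)\ge \frac{n+1}{n}\cdot n!\prod_{x\in X}b(x)^{-1}$; the paper's argument produces an explicit witness pair missed by $\Phi$, which fits its effectivity theme, as such pairs are precisely the objects counted by $\xi$ in Theorem~\ref{t:HP-SP}.
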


\begin{proof}
As mentioned in the introduction, for the ``if'' direction, the equality
can be easily proved by induction, see e.g.~\cite{Bea,SY}.
For the ``only if'' direction, let \. $x,y,z\in X$ \. be a poset elements such that \.
$x\prec z$, \. $y\prec z$, \. $x \.\|\. y$, and such that
both elements \ts $x,y$ \ts are covered by~$z$.
We claim that \eqref{eq:HP} is a strict inequality in this case.

In the notation above,
choose \. $g\in \cB(P)$ \. such that \. $g(x)=g(y)=z$ \. and \. $g(s)=s$  \. for all $s \in X \sm \{x,y\}$.  It is easy
to see that there exists \. $f\in \Ec(P)$, such that \. $f(x)= k-1$, \.
$f(y)=k$ \. and \. $f(z)=k+1$, for some \. $1< k< n$. Assume that $x$ precedes $y$ in the natural labeling.
Applying $\Psi$ we see that after promoting $f(x)$ to $z$, the result is no longer a linear extension on $B(y)$ and thus \. $\Psi \. $ is not defined there. Thus \.$\Phi$\. is not a bijection, which proves the claim.

Finally, observe that $P$ is an ordered forest if and only if every poset
element covers at most one element. This proves the result.
\end{proof}

\medskip

\section{Bounding order polynomial by the FKG inequality} \label{s:OP-induction}

In this section we prove the bound on the order polynomial from
Theorem~\ref{thm:HP order poly} using an inductive approach and an application
of the FKG inequality on the Shepp's lattice.

\smallskip

\subsection{Shepp's lattice and the FKG inequality}\label{ss:Shepp lattice}
Recall that a \defnb{lattice} \. $\ll:=(L,\prec^\di)$ \. is a partially ordered set on~$L$,
such that every \ts $a,b\in L$ \ts has a unique least upper bound called \defnb{join} \ts $a \vee b$,
and a unique greatest lower bound called \defnb{meet} \ts $a \wedge b$.
A lattice is called \defnb{distributive} \ts if
\[ a \wedge (b \vee c) \ = \ (a \wedge b) \vee (a \wedge c) \quad \text{ for all } \ a, \ts b, \ts c\ts \in L\ts.
\]
A function \. $\mu: L\to \Rb_{\geq 0}$ \. is called \defnb{log-supermodular} \ts
if
\[ \mu(a) \.  \mu(b) \ \leq \  \mu(a \wedge b) \. \mu(a \vee b) \quad \text{ for all } \ a,\ts b \in L\ts.
\]

Fix a positive integer \ts $t>0$.
Let \. $P=(X,\prec)$ \. be a poset on \ts $|X|=n$ \ts elements, let \. $X=Y \sqcup Z$ \.
be a partition of $X$ into two disjoint subsets.
\defnb{Shepp's lattice} \. $\ll= \ll_{Y,Z,t} \ := \ (L,\prec^\di)$ \. is defined as
	\[
 L \ := \ \big\{ \bv = (v_x)_{x \in X}  \, :  \,  1 \. \leq \. v_x \. \leq \.  t \big\},
 \]
and let
\begin{equation*}
\bv \. \preccurlyeq^\di \. \bw  \quad \Longleftrightarrow
\quad \left\{\aligned  & \, v_y \. \leq \. w_y   & \text{for all} \ \ y \in Y \\
& \, v_z \. \geq \. w_z & \text{for all} \ \ z \in Z \endaligned
\right.
\end{equation*}
Let  \. $\mu=\mu_{Y,Z} \ts : \ts L \to \{0,1\}$ \. be a function defined as
\begin{equation*}
\mu(\bv)=1 \quad \Longleftrightarrow
\quad  v_{x} \ \leq \ v_{x'}  \quad  \text{for all} \ \  x \preccurlyeq x' \ \ \
\text{such that} \ \ \ \left[ \. \aligned & x,x' \in Y\ts, \ \, \text{or} \\
& x,x'\in Z\ts. \endaligned\right.
\end{equation*}

\smallskip

\begin{thm}[\cite{She}]
	Let \ts $P=(X,\prec)$ \ts be a finite poset, and let \ts $X=Y \sqcup Z$ \ts be a partition of
the ground set $X$ into two disjoint subsets. Then
	\. $\ll_{Y,Z,t}$ \. is a distributive lattice, and \. $\mu_{Y,Z}$ \. is a log-supermodular function.
\end{thm}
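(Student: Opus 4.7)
The plan is to recognize $\ll_{Y,Z,t}$ as a product of chains (after reversing the order on the $Z$-coordinates) and then verify log-supermodularity of $\mu_{Y,Z}$ by a coordinate-wise case analysis. Since the product of finite chains is a distributive lattice, and order-reversal of some factors preserves this property (it is an isomorphism of distributive lattices onto their dual factors), the first claim will fall out once we write the join and meet explicitly.

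Concretely, I would begin by observing that for $\bv,\bw\in L$ the join and meet are given coordinate-wise by
\begin{equation*}
(\bv \vee \bw)_x \, = \, \begin{cases} \max(v_x,w_x) & x\in Y, \\ \min(v_x,w_x) & x\in Z, \end{cases}
\qquad
(\bv \wedge \bw)_x \, = \, \begin{cases} \min(v_x,w_x) & x\in Y, \\ \max(v_x,w_x) & x\in Z. \end{cases}
\end{equation*}
These formulas are forced by the definition of $\preccurlyeq^\di$, and distributivity then reduces to the identities $\min(a,\max(b,c)) = \max(\min(a,b),\min(a,c))$ and $\max(a,\min(b,c)) = \min(\max(a,b),\max(a,c))$ applied separately on $Y$ and on $Z$. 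This part is routine and I would dispatch it in a single line.

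For log-supermodularity, since $\mu$ is $\{0,1\}$-valued, the only nontrivial case is $\mu(\bv) = \mu(\bw) = 1$, and it suffices to show $\mu(\bv\wedge\bw) = \mu(\bv\vee\bw) = 1$. Take any comparable $x \preccurlyeq x'$ that are either both in $Y$ or both in $Z$. From $\mu(\bv)=\mu(\bw)=1$ we get $v_x \le v_{x'}$ and $w_x \le w_{x'}$, hence
\begin{equation*}
\min(v_x,w_x) \, \le \, \min(v_{x'},w_{x'}) \quad \text{and} \quad \max(v_x,w_x) \, \le \, \max(v_{x'},w_{x'}).
\end{equation*}
Applying whichever of these two inequalities corresponds to the join/meet formula above (depending on whether $x,x'\in Y$ or $x,x'\in Z$), we conclude $\mu(\bv\wedge\bw) = \mu(\bv\vee\bw) = 1$, as required.

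The main conceptual point, and the only place where something could go wrong, is to make sure the constraints defining $\mu$ really split along the partition $X = Y\sqcup Z$: there is no condition imposed on pairs $x\in Y$, $x'\in Z$. This is exactly what allows the $Y$- and $Z$-coordinates to be handled independently and exactly why the coordinate-reversal trick works. I do not expect any genuine obstacle; the result is essentially a direct unpacking of the definitions.
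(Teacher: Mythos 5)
Your argument is correct and follows essentially the same route as the paper: the coordinate-wise meet/join formulas (with $\min$ and $\max$ swapped on the $Z$-coordinates), distributivity via the $\min$/$\max$ identities, and log-supermodularity by reducing to the case $\mu(\bv)=\mu(\bw)=1$ and checking comparable pairs within $Y$ or within $Z$ separately. The ``product of reversed chains'' framing is just a slightly more structural phrasing of the same verification, so there is nothing to add.
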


\smallskip

This beautiful result is relatively little known;
we include a short proof for completeness.

\smallskip

\begin{proof}
	It follows from the definition of \. $\prec^\di$\ts,  that for all \. $\bv, \bw \in L$, we have:
	 \begin{equation}\label{eq:She1}
	 \begin{split}
	 	& (\bv \wedge \bw)_{y} \ = \  \min\{v_y, w_y\}, \qquad  (\bv \wedge \bw)_{z} \ = \  \max\{v_z, w_z\},\\
	 	& (\bv \vee \bw)_{y} \ = \  \max\{v_y, w_y\}, \qquad  (\bv \vee \bw)_{z} \ = \  \min\{v_z, w_z\},
	 \end{split}
	 \end{equation}
 where \ts $y \in Y$ \ts and \ts $z \in Z$.
 Now note that, for all real numbers \. $\alpha,\beta, \gamma \in \Rb$\.,
 \begin{equation}\label{eq:She2}
 	\min\{\alpha, \max\{\beta,\gamma\} \} \ = \ \max \{ \min \{\alpha,\beta\}, \. \min \{\alpha,\gamma\} \}.
 \end{equation}
It then follows from \eqref{eq:She1} and \eqref{eq:She2} that \ts $\ll$ \ts is a distributive lattice.

To show that $\mu$ is a log-supermodular function,
it suffices to verify the cases when \. $\mu(\bv)= \mu(\bw)=1$\ts.
Let \. $y,y' \in Y$ \. be such that $y \prec y'$.
Note that \. $v_y \leq v_{y'}$ \. and \. $w_y \leq w_{y'}$.
Then we have:
\[
(\bv \wedge \bw)_y \ = \ \min\{v_y, w_y\}  \ \leq \  \min\{v_{y'}, w_{y'}\} \ = \  (\bv \wedge \bw)_{y'}\..
\]
Similarly, we have \. $(\bv \wedge \bw)_{z} \. \leq \. (\bv \wedge \bw)_{z'}$ \. for all  \. $z \prec z'$, \. $z, z' \in Z$.
Therefore, \. $\mu(\bv \wedge \bw)=1$.
Analogously, we also have \. $\mu(\bv \vee \bw)=1$\., and the proof is complete.
\end{proof}

\smallskip

\begin{rem}
Shepp's lattice \ts $\ll$ \ts used in this section should not be confused with another lattice
defined in~\cite{She-XYZ} by Shepp.  Both lattices share the same ground set but have different
partial orders, and the partial order of the lattice in \cite{She-XYZ} was specifically chosen
to prove the \defng{$XYZ$ \ts inequality}.
\end{rem}

\smallskip

Now recall the classical \defn{FKG inequality}, see e.g.~\cite[$\S$6.2]{ASE}.

\smallskip

\begin{thm}[{\rm {\em{\defn{FKG inequality}}}, \cite{FKG}}]\label{thm:FKG}
	Let \. $\ll=(L,\prec)$ \. be a finite distributive lattice, and let
\. $\mu: L \to \Rb_{\geq 0}$ \. be a log-supermodular function.
Then, for every \ts $\prec^\di$-decreasing functions \.
$g,h: \ts L \to \Rb_{\geq 0}$\ts, we have:
\begin{equation}\label{eq:FKG-ineq}
E(\textbf{\em 1}) \. E(gh) \ \geq  \  E(g) \. E(h),
\end{equation}
	where \.
	\[ E(g) \, = \, E_\mu(g) \ := \  \sum_{x \in L} \, g(x) \. \mu(x)\., \]
and function \. $\textbf{\em 1}:L \to \Rb$ \. is given by \. $\textbf{\em 1}(x)=1$ \. for all \ts $x \in L$.

Furthermore, the inequality~\eqref{eq:FKG-ineq} also holds when both \ts $g,h$ \ts are $\prec^\di$-increasing.
On the other hand, when \ts $g$ is $\prec^\di$-decreasing and $h$ is $\prec^\di$-increasing,
 the inequality~\eqref{eq:FKG-ineq} is reversed.
\end{thm}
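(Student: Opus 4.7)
The plan is to prove the FKG inequality by the classical induction argument of Fortuin, Kasteleyn, and Ginibre. By Birkhoff's representation theorem, every finite distributive lattice embeds, as a sublattice preserving meets and joins, into a product of chains $[t_1]\times\cdots\times[t_m]$. Extending $\mu$ by zero off the embedded image preserves log-supermodularity, and $g,h$ can be extended arbitrarily without affecting the conclusion. So we may assume $\ll=[t_1]\times\cdots\times[t_m]$ with coordinatewise order, and induct on $m$.

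The base case $m=1$ is a weighted Chebyshev-type inequality on a single chain $[t]$, where log-supermodularity of $\mu$ is vacuous. The desired
\[
\Bigl(\sum_i \mu(i)\Bigr)\Bigl(\sum_i g(i) h(i) \mu(i)\Bigr) \ \ge \ \Bigl(\sum_i g(i)\mu(i)\Bigr)\Bigl(\sum_i h(i)\mu(i)\Bigr)
\]
follows by rewriting the difference as $\tfrac12\sum_{i,j}\mu(i)\mu(j)\bigl(g(i)-g(j)\bigr)\bigl(h(i)-h(j)\bigr)$: since $g, h$ are both nonincreasing on the chain, each summand is nonnegative.

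For the inductive step $m\ge 2$, write $\ll=\ll'\times[t_m]$ with points $(s,a)$, $s\in\ll'$, $a\in[t_m]$, and define the marginal $\nu(s):=\sum_a\mu(s,a)$ on $\ll'$ along with the conditional averages
\[
G(s)\,:=\,\frac{1}{\nu(s)}\sum_a g(s,a)\mu(s,a), \qquad H(s)\,:=\,\frac{1}{\nu(s)}\sum_a h(s,a)\mu(s,a).
\]
Applying the base case on each vertical fiber $\{s\}\times[t_m]$ with weight $\mu(s,\cdot)$ yields $\sum_a g(s,a)h(s,a)\mu(s,a)\ge G(s)H(s)\nu(s)$. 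Summing in $s$ reduces the target to an FKG instance on $\ll'$ with measure $\nu$ and functions $G, H$, which is closed by the inductive hypothesis on~$m$.

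The main obstacle is to verify the hypotheses for that outer application: namely, (i) $\nu$ is log-supermodular on $\ll'$, and (ii) $G, H$ are $\prec$-monotone on $\ll'$ in the same direction as $g, h$ on $\ll$. Both follow from another invocation of the chain case on $[t_m]$: log-supermodularity of $\mu$ on $\ll$ says $\mu(s,b)\mu(s',a)\le\mu(s,a)\mu(s',b)$ for $s\prec s'$ in $\ll'$ and $a\le b$ in $[t_m]$, which is exactly a likelihood-ratio domination of the slice measure $\mu(s',\cdot)$ over $\mu(s,\cdot)$, and translates via Chebyshev-style rearrangement into both (i) and (ii). This careful propagation of monotonicity through conditional expectations is the technical heart of the argument. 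Finally, the both-increasing case reduces to the both-decreasing one by replacing $g, h$ with $g(\hat x)-g$ and $h(\hat x)-h$ for a top element $\hat x\in\ll$; the mixed case (one increasing, one decreasing) flips the sign of each summand in the Chebyshev expansion and hence reverses the inequality.
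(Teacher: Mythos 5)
The paper does not prove Theorem~\ref{thm:FKG}: it is quoted as the classical FKG inequality with a citation to \cite{FKG}, so your proposal can only be judged against the standard argument from the literature --- which is in fact the route you chose (Birkhoff embedding of $L$ into a product of chains, extension of $\mu$ by zero, induction on the number of factors, and a Chebyshev sum inequality on each fiber). The skeleton is the right one, but the step that you yourself call ``the technical heart'' is not justified by what you wrote, and as stated it would fail.

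You claim that both (i) log-supermodularity of the marginal $\nu(s)=\sum_a\mu(s,a)$ and (ii) monotonicity of the conditional averages $G,H$ follow from the comparable-slice condition $\mu(s,b)\ts\mu(s',a)\le \mu(s,a)\ts\mu(s',b)$ for $s\preccurlyeq s'$ and $a\le b$. That condition does give (ii) (likelihood-ratio, hence stochastic, domination of $\mu(s',\cdot)$ over $\mu(s,\cdot)$, combined with $g(s,\cdot)\ge g(s',\cdot)$ pointwise), but it says nothing about \emph{incomparable} $s,s'$ in $L'$, which is exactly where (i) must be verified: one needs $\bigl(\sum_a\mu(s,a)\bigr)\bigl(\sum_b\mu(s',b)\bigr)\le\bigl(\sum_a\mu(s\wedge s',a)\bigr)\bigl(\sum_b\mu(s\vee s',b)\bigr)$, and this requires the full hypothesis $\mu(s,a)\ts\mu(s',b)\le\mu(s\wedge s',a\wedge b)\ts\mu(s\vee s',a\vee b)$ together with a genuine rearrangement pairing the cross terms with $a<b$ against those with $a>b$ --- the one-variable Ahlswede--Daykin-type lemma that marginalization preserves log-supermodularity. (If $\mu$ is constant in the fiber, the comparable-slice conditions are all equalities while $\nu=t_m\cdot\mu(\cdot,1)$ can violate log-supermodularity at an incomparable pair, so the condition you invoke cannot suffice.) Two smaller points: the extensions of $g,h$ off the embedded copy of $L$ cannot be ``arbitrary,'' since the product-of-chains statement is applied to them and needs them monotone; monotone nonnegative extensions do exist, e.g.\ $\tilde g(x):=\max\{g(a)\.:\.a\in L,\ a\succcurlyeq x\}$, but this must be said. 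Also, when $\nu(s)=0$ the averages $G(s),H(s)$ are undefined; one should either first restrict to the support of $\mu$ (which is a sublattice precisely because $\mu$ is log-supermodular) or note that $\nu$-null fibers can be discarded without affecting any of the four sums.
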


\smallskip

Below we apply the FKG inequality to Shepp's lattice to prove several
inequalities for the order polynomial.

\medskip

\subsection{Correlation inequalities} \label{ss:OP-induction-prelim}
%
Let \. $P=(X,\prec)$ \. be a poset on $n$ elements.  As above, denote by \ts $\cS=\cS(P)$ \ts the
set of bijections \ts $f: X\to [n]$.
%
By an abuse of notation, for every (not necessarily distinct) elements \. $u,  v\in X$,
we write
\[
\{\ts u \precc v  \ts \}  \qquad \text{as a shorthand for the collection} \qquad
\{\ts f \in \cS \. : \. f(u) \le  f(v) \ts \}\..
\]
One can write the set of linear extensions \ts $\Ec(P)$ \ts as the intersection of
collections \. $\{u\precc v\}$, for all pairs \ts $u\prec v$ \ts in~$P$.  Conversely,
every such intersection is a set of linear extensions of the corresponding poset.
The language of \defn{collections} is technically useful for our purposes.

Let \. $X= Y \. \sqcup \. Z$ \. be a \defn{partition} of~$X$ into two disjoint subsets.
%
A collection \ts $C$ \ts is called \ts \defn{$Y$-minimizing} \ts w.r.t.\
the partition \ts $Y\sqcup Z$ \ts
if $C$ is an intersection of collections of the form \. $\{y\precc z\}$,
for \. $y\in Y$ \. and \. $z\in Z$.
%
Similarly,
a collection \ts $C$ \ts is called \defn{$Y$-maximizing} w.r.t.\  the partition \ts $Y\sqcup Z$ \ts
if $C$ is an intersection of collections of the form \. $\{z\precc y\}$,
for \. $y\in Y$ \. and \. $z\in Z$.
By a slight abuse of notation, we write \ts $\Omega(C,t)$ \ts
to denote the order polynomial of a poset given by the collection~$C$.

For the rest of this section, let \ts $A$ \ts be the collection given by
\begin{equation*}\label{eq:colA}
	A \ := \  \bigcap_{y\prec y', \ y, y'\in Y} \. \{ y \precc y' \}\ts \ \cap \ \bigcap_{z\prec z', \ z, z'\in Z} \. \{ z \precc z' \}\ts,
\end{equation*}
the collection of events involving only elements of $Y$ or only elements of $Z$.
\smallskip

\begin{lemma}[{\cite[Eq.~(2.12)]{She}}]\label{t:FKG order polynomial}

In the notation above,  let \ts $C,C'$ \ts
be $Y$-minimizing collections w.r.t.\ partition \ $X=Y\sqcup Z$. Then, for every integer \ts $t>0$, we have:
\[
\Omega \big(C  \cap   C' \cap A,\ts t\big)  \. \cdot \.  \Omega\big( A,\ts t\big)
\quad \geq \quad \Omega \big(C \cap A,\ts t\big)  \. \cdot \.  \Omega\big( C' \cap A,\ts t\big)\..	
\]
If $C$ is $Y$-minimizing and $C'$ is $Y$-maximizing, then the above inequality is reversed.
\end{lemma}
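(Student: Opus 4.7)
The plan is to reinterpret $\Omega$ as an expectation on Shepp's lattice $\ll_{Y,Z,t}$ and then apply the FKG inequality (Theorem~\ref{thm:FKG}) to the indicators of the collections $C$ and~$C'$.

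First, I identify each map $f: X \to [t]$ with the vector $\bv = (v_x)_{x\in X}\in L$ defined by $v_x:=f(x)$. Under this correspondence, $\mu_{Y,Z}(\bv)=1$ expresses precisely the requirement that $f$ be order-preserving on $Y$ and on $Z$ separately, which is exactly the content of the collection~$A$. Consequently, writing $g_C(\bv):=\mathbf{1}[\bv\in C]$ (and similarly for~$g_{C'}$), we obtain
$$\Omega(A,t) \. = \. E_\mu(\mathbf{1}), \quad \Omega(C\cap A,t) \. = \. E_\mu(g_C), \quad \Omega(C'\cap A,t) \. = \. E_\mu(g_{C'}),$$
$$\Omega(C\cap C'\cap A,t) \. = \. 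E_\mu(g_C\ts g_{C'}).$$

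Second, I verify the monotonicity of these indicators. Suppose $C$ is $Y$-minimizing, so that $g_C(\bv)=1$ iff $v_y\le v_z$ for each specified pair $(y,z)\in Y\times Z$. If $\bv\preccurlyeq^\di \bw$, then $v_y\le w_y$ and $v_z\ge w_z$, so $g_C(\bw)=1$ implies $v_y\le w_y\le w_z\le v_z$, i.e.\ $g_C(\bv)=1$. Hence $g_C$ is $\preccurlyeq^\di$-decreasing. An entirely analogous check, applied to constraints of the form $v_z\le v_y$, shows that if $C'$ is $Y$-maximizing then $g_{C'}$ is $\preccurlyeq^\di$-increasing.

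Third, I invoke Theorem~\ref{thm:FKG}. When both $C$ and $C'$ are $Y$-minimizing, the indicators $g_C,g_{C'}$ are both $\preccurlyeq^\di$-decreasing, and the FKG inequality gives
$$E_\mu(\mathbf{1})\.E_\mu(g_C\ts g_{C'}) \ \ge \ E_\mu(g_C)\.E_\mu(g_{C'}),$$
which translates back to the claimed inequality on order polynomials. When $C$ is $Y$-minimizing and $C'$ is $Y$-maximizing, one indicator is $\preccurlyeq^\di$-decreasing and the other is $\preccurlyeq^\di$-increasing, so the last clause of Theorem~\ref{thm:FKG} yields the reversed inequality. The main obstacle is purely bookkeeping: the order $\preccurlyeq^\di$ is \emph{reversed} on the $Z$-coordinates, so a constraint of the form ``$v_y\le v_z$'' becomes harder to satisfy as one moves up in $\preccurlyeq^\di$, while ``$v_z\le v_y$'' becomes easier. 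Once this dictionary is in place, the result is immediate from the FKG framework built up in Section~\ref{ss:Shepp lattice}.
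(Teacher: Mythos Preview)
Your proof is correct and follows essentially the same approach as the paper: interpret the order polynomials as expectations on Shepp's lattice with $\mu_{Y,Z}$ encoding the collection~$A$, verify that the indicator of a $Y$-minimizing collection is $\preccurlyeq^\di$-decreasing via the chain $v_y\le w_y\le w_z\le v_z$, and apply the FKG inequality. You also spell out the $Y$-maximizing case explicitly, which the paper leaves implicit.
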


\smallskip

In the probabilistic language, it says that the order polynomial
satisfies \defna{positive correlation} for intersections of $Y$-minimizing
collections (viewed as events).  We should note that in~\cite{She} this
result was not singled out and appears as an equation in the middle of
the proof of the main result.  We again include the proof for completeness.

\smallskip

\begin{proof}[Proof of Lemma~\ref{t:FKG order polynomial}]
Let \ts $\ll=(L,\prec^\di)$ \ts be Shepp's lattice defined in~$\S$\ref{ss:Shepp lattice}.
Let \. $g,h : L \to \{0,1\}$ \. be given by
	\[  g(\bv)  \ := \
\begin{cases}
	\. 1 & \text{if} \ \  v_y  \ts \leq \ts v_z \ \ \, \forall \. \{ y \precc z\} \in C\\
	\. 0 & \text{otherwise}
\end{cases} \quad \text{and}
\quad  h(\bv)  \ := \
\begin{cases}
	\. 1 & \text{if} \ \  v_y  \ts \leq \ts v_z  \ \ \, \forall \. \{ y \precc z\} \in C'\\
	\. 0 & \text{otherwise.}
\end{cases}
\]
Let us prove that \ts $g$ \ts is $\prec^\di$-decreasing.
It suffices to show that
\begin{equation*}
g(\bw)\ts =\ts 1 \quad \text{and} \quad \bv \prec^\di \bw \quad \Longrightarrow \quad  g(\bv)\ts = \ts 1.
\end{equation*}
Note that for every \ts $y \in Y$ \ts and \ts $z \in Z$ \ts such that \ts $y \preccurlyeq z$, we have:
 \[  v_y \ \leq \  w_y \ \leq \ w_z \ \leq \ v_z, \]
where the first and the third inequality is because \ts $\bv \prec^\di \bw$\ts, and the second inequality is because $g(\bw)=1$.
This implies that $g(\bv)=1$, and thus $g$ is a $\prec^\di$-decreasing function.
By the same reasoning, we also have that function \ts $h$ \ts is $\prec^\di$-decreasing.

Finally, note that
\begin{align*}
	\Omega(C \cap C' \cap A, t) \. = \. E(gh), \ \  \Omega( A, t) \. = \. E(\textbf{1}),
	\ \  \Omega(C  \cap A, t) \. = \. E(g),
	\ \  \Omega(C' \cap A, t) \. = \. E(h),
\end{align*}
The lemma now follows from the FKG inequality (Theorem~\ref{thm:FKG}).
\end{proof}

\smallskip

We can now apply this result in the more traditional notation of order polynomials
of posets.

\smallskip

\begin{lemma}\label{l:main order polynomial}
	Let \. $P=(X,\prec)$ \. be a poset, and let \. $x,y\in X$ \. be minimal elements.
	Then, for every integer $t>0$, we have:
\[ {\Omega\big(P,\ts t\big)} \. \cdot \. {\Omega\big(P \sm \{x,y\},\ts t\big)}  \ \geq \  {\Omega\big(P \sm x, \ts t\big)}  \. \cdot \.  {\Omega\big(P\sm y,\ts t\big)},
\]
where by \ts $P\sm x$, \ts $P\sm y$ \ts and \ts $P\sm \{x,y\}$ \ts denote the
subposets of~$\ts P$ \ts restricted to \ts $X-x$, \ts \ts $X-y$ \ts and \ts $X-x-y$, respectively.
\end{lemma}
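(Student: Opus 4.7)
The plan is to deduce Lemma~\ref{l:main order polynomial} as a direct consequence of the FKG-type correlation inequality in Lemma~\ref{t:FKG order polynomial}, applied with the partition $Y=\{x,y\}$ and $Z=X\setminus\{x,y\}$. Since $x$ and $y$ are distinct minimal elements of $P$, they are incomparable, so there are no relations $\{x\prec y\}$ or $\{y\prec x\}$ inside $Y$. Hence the collection $A$ of~$\S\ref{ss:OP-induction-prelim}$ only enforces the order-preservation constraints within $Z$, and coordinates $v_x,v_y$ on Shepp's lattice remain entirely free in $[t]$.

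I would then introduce two $Y$-minimizing collections recording the constraints in $P$ that tie $x$ and $y$ to the rest:
\[
C_x \ := \ \bigcap_{\substack{u\in Z\\ u\succ x}} \{x\precc u\}\ts,
\qquad
C_y \ := \ \bigcap_{\substack{v\in Z\\ v\succ y}} \{y\precc v\}\ts.
\]
Because $x,y$ are minimal, the union of these relations with the internal relations on $Z$ recovers exactly the cover relations of $P$. Thus the four quantities in Lemma~\ref{t:FKG order polynomial} translate into order polynomials of subposets of $P$, with straightforward factors of $t$ coming from the unconstrained coordinates:
\[
\Omega(C_x\cap C_y\cap A,t)=\Omega(P,t),\ \ \Omega(C_x\cap A,t)=t\.\Omega(P\sm y,t),
\]
\[
\Omega(C_y\cap A,t)=t\.\Omega(P\sm x,t),\ \ \Omega(A,t)=t^2\.\Omega(P\sm\{x,y\},t).
\]

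Applying Lemma~\ref{t:FKG order polynomial} to $C_x$ and $C_y$ (both $Y$-minimizing, so the inequality is in the positive-correlation direction) yields
\[
\Omega(P,t)\.\cdot\.t^2\.\Omega(P\sm\{x,y\},t)\ \ge\ t\.\Omega(P\sm y,t)\.\cdot\.t\.\Omega(P\sm x,t),
\]
and the factors of $t^2$ cancel to give the claimed inequality. The main thing to verify carefully is the bookkeeping in the third step: that the free coordinates $v_x$ and $v_y$ on Shepp's lattice really contribute only multiplicative factors of $t$, which in turn relies crucially on $x$ and $y$ being minimal (so that no constraint of the form $\{w\precc x\}$ or $\{w\precc y\}$ with $w\in Z$ appears in $A$). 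Once this is checked, the rest of the argument is a direct substitution into Lemma~\ref{t:FKG order polynomial}.
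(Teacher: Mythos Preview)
Your proposal is correct and is essentially identical to the paper's own proof: the paper also takes $Y=\{x,y\}$, $Z=X\setminus\{x,y\}$, defines the same two $Y$-minimizing collections (called $C$ and $C'$ there), records the same four identities with factors of $t$ and $t^2$, and applies Lemma~\ref{t:FKG order polynomial}. The only cosmetic difference is that the paper writes the collections over $B(x)-x$ and $B(y)-y$ rather than $\{u\in Z:u\succ x\}$, which coincide since $x,y$ are minimal and incomparable.
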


\smallskip

\begin{proof}
Note that \. $x$ \ts and \ts $y$ \. are incomparable elements.
Let \. $Y:= \{x,y \}$ \. and \. $Z:= X\sm Y$ \. be the partition of~$X$.
Consider the $Y$-minimizing collections \. $C$ \ts and \ts $C'$ \. given by
\begin{equation}\label{eq:colC}
C  \ := \ \bigcap_{z\in B(x)-x} \. \{ x \precc z \} \ \, \quad \text{and} \quad \  C'  \ := \  \bigcap_{z'\in B(y)-y} \. \{ y \precc z' \},
\end{equation}
where \ts $B(x)$ \ts and \ts $B(y)$ \ts are upper order ideals of elements~$x$ and~$y$, respectively.
Observe that
	\begin{alignat*}{2}
	 & \Omega(P,t) \ = \  \Omega(C \cap C' \cap A,t)\., \qquad  && \Omega(P\sm x,t) \ = \  \frac{1}{t} \,\. \Omega(C' \cap A,t)\.,\\
	    	& \Omega(P \sm y,t) \ = \  \frac{1}{t} \, \Omega(C \cap A,t)\., \qquad  && \Omega(P \sm \{x,y\},t) \ = \  \frac{1}{t^2} \, \Omega(A,t)\..
	\end{alignat*}
	The lemma now follows from  Lemma~\ref{t:FKG order polynomial}
	and the equations above.
\end{proof}

\smallskip

\subsection{Lower bounds}\label{ss:OP-induction-lower}
We are now ready to prove Theorem~\ref{thm:HP order poly} by induction.
The following lemma established the induction step from which the
theorem follows.

\smallskip

\begin{lemma}\label{l:OP-HP-ind}
Let \. $P=(X,\prec)$ \. be a finite poset, and let \ts $x\in X$ \ts
be a minimal element.  Assume that \ts $b(x)>1$.
Then we have:
\begin{equation}\label{eq:HP-ind-min}
\frac{\Omega(P,t)}{t+1} \ \geq \ \frac{\Omega(P\sm x,t)}{b(x)}\..
\end{equation}
\end{lemma}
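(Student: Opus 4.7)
The plan is to first use Lemma~\ref{l:main order polynomial} to reduce to the case where $x$ is the global minimum of $P$, and then prove the reduced inequality via Stanley's $P$-partition formula.

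\smallskip

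\emph{Step 1 (Reduction to the case of $x$ being the global minimum).}
Write $R(P) := \Omega(P,t)/\Omega(P\sm x, t)$, so that the target becomes $R(P) \geq (t+1)/b(x)$.
If $y \in X$ is a minimal element of $P$ with $y \neq x$, then $y$ and $x$ are incomparable, hence $y \notin B(x)$ and $b_{P\sm y}(x) = b_P(x)$. Applying Lemma~\ref{l:main order polynomial} to the pair $x,y$ and rearranging yields $R(P) \geq R(P\sm y)$.
I would iterate this removal; since $B(x)$ is an order filter of $P$, at every stage either the current ground set equals $B(x)$ or it contains a minimal element lying outside $B(x)$, so the procedure terminates at the subposet $Q := P|_{B(x)}$ with $R(P) \geq R(Q)$. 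In $Q$ the element $x$ is the global minimum, so it suffices to prove~\eqref{eq:HP-ind-min} for $Q$.

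\smallskip

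\emph{Step 2 (Base case via $P$-partitions).}
When $x$ is the global minimum of $Q$, every order-preserving $g\colon Q \to [t]$ satisfies $g(x) = \min_{y\in Q\sm x} g(y)$; summing over the admissible choices of $g(x)$ yields the elementary identity
\[
\Omega(Q,t) \ = \ \sum_{j=1}^t \Omega(Q\sm x, j).
\]
Setting $P' := Q\sm x$ and $m := |P'| = b(x) - 1$, the target inequality becomes the polynomial identity
\[
(m+1)\sum_{j=1}^t \Omega(P', j) \ \geq \ (t+1)\,\Omega(P', t).
\]
I would prove this using Stanley's $P$-partition expansion (with respect to any natural labeling)
\[
\Omega(P', j) \ = \ \sum_{\pi \in \Ec(P')} \binom{j + m - 1 - \mathrm{des}(\pi)}{m},
\]
together with the hockey-stick identity $\sum_{j=1}^t \binom{j+m-1-d}{m} = \binom{t+m-d}{m+1}$ and the binomial relation $(m+1)\binom{n}{m+1} = n\binom{n-1}{m}$. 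A short manipulation then reduces the difference of the two sides to
\[
\sum_{\pi \in \Ec(P')} \bigl(m - 1 - \mathrm{des}(\pi)\bigr) \binom{t + m - 1 - \mathrm{des}(\pi)}{m},
\]
each summand of which is non-negative since $\mathrm{des}(\pi) \leq m - 1$ for every linear extension of $P'$.

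\smallskip

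\emph{Main obstacle.}
The central difficulty is that naive approaches collapse into circularity: a direct induction on $|P|$, or the trick of ``appending a new global minimum to $P'$'', simply reproduces an instance of Lemma~\ref{l:OP-HP-ind} for a smaller poset, so the argument never reaches a bona-fide base case. Step~1 above circumvents this by invoking Lemma~\ref{l:main order polynomial} only to delete minimal elements \emph{outside} $B(x)$, which keeps $b(x)$ invariant and preserves the whole structure of $B(x)$; the circularity is then broken by the descent-statistic identity in Step~2, which owes nothing to the FKG machinery underlying Lemma~\ref{l:main order polynomial}.
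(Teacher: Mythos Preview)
Your proof is correct and follows essentially the same route as the paper: Step~1 is identical, and Step~2 uses the same summation identity $\Omega(Q,t)=\sum_{j=1}^t\Omega(Q\sm x,j)$ followed by a hockey-stick reduction, the only cosmetic difference being that you expand $\Omega(P',\cdot)$ via Stanley's descent formula while the paper uses the more elementary chain-of-ideals expansion $\Omega(P',t)=\sum_m|\cI_m(P')|\binom{t}{m}$ and the termwise bound $\binom{t+1}{m+1}\ge\frac{t+1}{b(x)}\binom{t}{m}$. One slip to fix: the claim ``$g(x)=\min_{y\in Q\sm x}g(y)$'' is false (only $\le$ holds), though the identity you state is still correct after the reindexing $j\leftrightarrow t-j+1$.
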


\begin{proof}  Let \ts $Q=(Y,\prec)$ \ts be a finite poset.
By labeling the elements in~$Y$ with $m$ distinct integers from~$\ts [t]$,
we can write
\begin{align}\label{eq:sum_ideals}
\Omega(Q,t) \ = \ \sum_{m=1}^n \, \bigl|\cI_m(Q)\bigr| \, \binom{t}{m}\.,
\end{align}
where \. $|\cI_m(Q)|$ \. is the number of ascending chains
$$\emp \. = \. I_0 \. \subset \. I_1 \. \subset \. I_2 \. \subset \. \ldots \. \subset \. I_m \. = \.Q
$$
\. of  upper order ideals in~$P$, s.t.\ \. $I_i \sm I_{i-1} \neq \emp$ \. for all \ts $1\le i \le m$.

Let \. $n:=|X|$ \. be the number of elements in~$X$.
Denote by \. $P' = P \sm \{x\}$ \. the induced poset on \ts $X \sm x$.
Suppose that $x$ is a unique minimal element of $P$, and let \. $P'=P\sm x$.
Note that \. $b(x)=n$ \. in this case.
Summing over all possible values of~$x$, we obtain~\eqref{eq:HP-ind-min}:
\begin{align*}
\Omega(P,t) \ &=  \ \sum_{k=1}^t \. \Omega(P',k) \ =_{\eqref{eq:sum_ideals}} \
\sum_{m=1}^{n-1} \, \bigl|\cI_m(P')\bigr| \, \sum_{k=m}^t \. \binom{k}{m} \ = \ \sum_{m=1}^{n-1} \, \bigl|\cI_m(P')\bigr| \,  \binom{t+1}{m+1} \\
& \geq \ \frac{t+1}{n} \, \sum_{m=1}^{n-1} \, \bigl|\cI_m(P')\bigr| \,   \binom{t}{m} \ =_{\eqref{eq:sum_ideals}} \ \frac{t+1}{n} \ \Omega(P',t)\..
\end{align*}
Here the inequality follows from
$$\binom{t+1}{m+1} \ = \ \frac{t+1}{m+1} \. \binom{t}{m} \ \geq \ \frac{t+1}{n} \. \binom{t}{m} \quad \ \.\text{for all \ $m\leq n-1$.}
$$

Suppose now that \ts $x\in X$ \ts is not a unique minimal element.  Let \ts $y\in X$, \ts $y\ne x$ \ts
be another minimal element in~$P$.  By Lemma~\ref{l:main order polynomial}, we have:
\begin{equation}\label{eq:FKG2}
\frac{\Omega(P,t)}{\Omega(P',t)} \ \geq \ \frac{\Omega(P \sm y, t)}{\Omega(P' \sm y, t)} \,.
\end{equation}
Now proceed by induction to remove all minimal elements in~$P$ incomparable to~$x$,
until element~$x$ becomes
the unique minimal element.
Applying the inequality~\eqref{eq:FKG2} repeatedly, we obtain:
\begin{equation*}
\frac{\Omega(P,t)}{\Omega(P',t)} \ \geq \
\ldots \ \geq \ \frac{t+1}{b(x)}\,.
\end{equation*}
This proves~\eqref{eq:HP-ind-min} in full generality.
\end{proof}

\smallskip

\begin{proof}[Proof of Theorem~\ref{thm:HP order poly}]
We prove the inequality~\eqref{eq:OP-HP} by induction.
First, suppose that \ts $b(x)=1$, so \ts $x$ \ts is the maximal element in~$P$.
Then \. $\Omega(P,t) \ts = \ts t \. \Omega(P',t)$, since we can choose the value \ts
$f(x)\in [t]$ \ts independently of other values.  The inequality~\eqref{eq:OP-HP}
follows then.  For \. $b(x)>1$,  Lemma~\ref{l:OP-HP-ind} gives the step of
induction and complete the proof.
\end{proof}

\medskip

\subsection{Log-concavity}\label{ss:OP-log-concavity}
The main result of this subsection is the log-concavity of the evaluation of the order polynomial.

\smallskip

\begin{thm}\label{thm:log-concave}
	Let $P=(X,\prec)$ be a finite poset.
	Then, for every integer \ts $t \geq 2$, we have:
	\[ \Omega(P,t)^2  \  \geq \ \Omega(P,t+1) \. \cdot\.  \Omega(P,t-1).\]
\end{thm}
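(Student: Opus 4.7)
My plan is to apply the FKG inequality (Theorem~\ref{thm:FKG}) to Shepp's lattice from~\S\ref{ss:Shepp lattice}, specialized to the partition \. $Y=X$, $Z=\emp$ \. with lattice parameter \ts $t+1$. In this case \. $\ll=\ll_{X,\emp,t+1}$ \. reduces to the product distributive lattice \. $L=[t+1]^X$ \. with coordinate-wise order, and the log-supermodular function \. $\mu$ \. becomes simply the indicator that \. $\bv\in L$ \. is order-preserving on \ts $P$, so that \. $E(\textbf{1}) \ts = \ts \Omega(P,t+1)$.

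Next I would introduce two natural truncation indicators \. $g,h: L \to \{0,1\}$: set \. $g(\bv)=1$ \. iff \. $v_x \leq t$ \. for all \ts $x\in X$, and \. $h(\bv)=1$ \. iff \. $v_x \geq 2$ \. for all \ts $x \in X$. Then \. $g$ \. is \. $\prec^\di$-decreasing, \. $h$ \. is \. $\prec^\di$-increasing, and a direct count (for $h$ one shifts values down by one) gives
\[ E(g) \, = \, \Omega(P,t), \qquad E(h) \, = \, \Omega(P,t), \qquad E(gh) \, = \, \Omega(P,t-1). \]

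At this point the reversed form of FKG (the final case in Theorem~\ref{thm:FKG}, applicable precisely because \. $g$ \. is decreasing and \. $h$ \. is increasing) delivers
\[ E(\textbf{1}) \. E(gh) \, \leq \, E(g) \. E(h), \]
and substituting the four counts above yields \. $\Omega(P,t+1) \. \Omega(P,t-1) \, \leq \, \Omega(P,t)^2$, which is the claim.

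I do not anticipate any serious obstacle: the argument is a one-shot application of FKG once the correct lattice and indicator pair are in place. The only design step is to realize \. $\Omega(P,t-1)$ \. as the joint event of ``cap from above'' and ``floor from below'' on order-preserving maps into \ts $[t+1]$, so that \ts $E(gh)$ \ts picks out exactly \ts $\Omega(P,t-1)$. The strict version in Theorem~\ref{thm:log-concave-strict} would require a separate analysis of the equality cases in FKG, but the non-strict case treated here is immediate.
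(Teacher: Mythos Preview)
Your proposal is correct and is essentially the paper's own proof: both apply the reversed FKG inequality on Shepp's lattice with \. $Y=X$, \. $Z=\emp$, using the ``floor'' and ``cap'' indicators to realize \. $\Omega(P,t-1)$, \. $\Omega(P,t)$, \. $\Omega(P,t+1)$ \. as the four expectations. The only cosmetic difference is that the paper takes lattice parameter~$t$ (and substitutes $t\to t+1$ at the end) while you take parameter~$t+1$ directly, and the roles of $g$ and $h$ are swapped accordingly.
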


\smallskip

As for other poset inequalities, one can ask about equality conditions
in Theorem~\ref{thm:log-concave}.  Turns out, the log-concavity in the theorem
is always strict, see Theorem~\ref{thm:log-concave-strict} below.
The proof of both results use the same approach, but the strict
log-concavity is built on top of the non-strict version and is
a bit more involved.  Thus, we start with the easier result for clarity.

\smallskip

\begin{proof}[Proof of Theorem~\ref{thm:log-concave}]
Let \ts $Y=X$ \ts and \ts $Z=\varnothing$.
Let \ts $\ll=(L,\prec^\di)$ \ts be Shepp's lattice defined in~$\S$\ref{ss:Shepp lattice}, and let \ts $t\geq 3$.
		 Let \. $g,h : L \to \{0,1\}$ \. be two functions given by
				\begin{align*}
			g(\bv) \ &:= \
			\begin{cases}
				\. 1 & \text{if} \ \ v_x \geq 2 \ \ \text{ for all \ $x \in X$}\\
				\. 0 & \text{otherwise},
			\end{cases} \quad
			h(\bv) \ := \
			\begin{cases}
				\. 1 & \text{if} \ \ v_x \leq t-1 \ \ \text{ for all \ $x \in X$}\\
				\. 0 & \text{otherwise}.  	
			\end{cases}
		\end{align*}
To prove that $g$ is $\prec^\di$-increasing, it suffices to show that
\begin{equation*}
g(\bv)\ts =\ts 1 \quad \text{and} \quad \bv \prec^\di \bw \quad \Longrightarrow \quad  g(\bw)\ts = \ts 1.
\end{equation*}
Note that, for every \ts $x \in Y=X$, we have:
\[
w_x \, \geq \,  v_x \, \geq \, 2,
\]
where the first  inequality is because \ts $\bv \prec^\di \bw$\ts, and the second inequality is because $g(\bv)=1$.
This implies that $g(\bw)=1$, and thus $g$ is a $\prec^\di$-increasing function.
By an analogous reasoning we also have that $h$ is $\prec^\di$-decreasing.	
	
		Now note that
		\begin{equation}\label{eq:expectation-as-OP}
		\begin{split}
			E(gh) \ &= \  \big|\big\{ \bv \in L \, : \,  2 \. \leq \. v_x \. \leq \. t-1  \, \ \text{ for all } \,\  x \in X\big\} \big| \ = \  \Omega(P,t-2),\\
			E(g)  \ &= \   \big|\big\{ \bv \in L \. : \,  2 \. \leq \. v_x \, \ \text{ for all } \, \ x \in X\big\} \big| \ = \   \Omega(P,t-1),\\
			E(h) \ &= \  \big|\big\{ \bv \in L \, : \,  v_x \. \leq \. t-1  \,\ \text{ for all } \, \ x \in X\big\} \big| \ = \  \Omega(P,t-1),\\
			E(\textbf{1})  \ &= \  |L| \ = \ \Omega(P,t).
		\end{split}
		\end{equation}
		It then follows from the FKG inequality~(Theorem~\ref{thm:FKG}), that
		\[ \Omega(P,t-2) \.\cdot\. \Omega(P,t) \ \leq  \ \Omega(P,t-1) \.\cdot\. \Omega(P,t-1), \]
		and the theorem now follows by substituting \. $t\to t+1$.
\end{proof}

\medskip

\subsection{Strict log-concavity}
We can now prove that the inequality in Theorem~\ref{thm:log-concave} is always strict,
by applying the FKG inequality in a more careful manner.  This theorem will also
proved useful in~$\S$\ref{ss:OP-Graham} to establish the strict asymptotic version of
Graham's Conjecture~\ref{conj:Graham}.

\smallskip

\begin{thm}\label{thm:log-concave-strict}
	Let \ts $P=(X,\prec)$ \ts be a poset with \ts $|X|=n$ \ts elements.
	Then, for every integer \ts $t \geq 2$, we have:
	\begin{equation}\label{eq:log-concave-strict}
	 \Omega(P,t)^2  \  \geq \ \bigg(1 \. + \. \frac{1}{(t+1)^{n+1}} \bigg) \    \Omega(P,t+1) \   \Omega(P,t-1).
	\end{equation}
\end{thm}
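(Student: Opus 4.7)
The plan is to quantify the slack in the FKG inequality used in the proof of Theorem~\ref{thm:log-concave}. Setting $R(s):=\Omega(P,s+1)-\Omega(P,s)$, a direct expansion of $\Omega(P,t)^2 - \Omega(P,t+1)\Omega(P,t-1)$ via $\Omega(P,t+1)=\Omega(P,t)+R(t)$ and $\Omega(P,t-1)=\Omega(P,t)-R(t-1)$ yields the identity
$$
\Omega(P,t)^2 \. - \. \Omega(P,t+1)\ts\Omega(P,t-1) \ = \ R(t-1)\ts\Omega(P,t) \. - \. \Omega(P,t-1)\ts R(t).
$$
Decomposing further via $R(s) = \sum_{I \neq \varnothing} \Omega(P\setminus I,s)$ over non-empty upper ideals $I\subseteq X$ (obtained by setting $I = f^{-1}(\{s+1\})$ for each order-preserving map $f:P\to [s+1]$ hitting~$s+1$), the right-hand side becomes
$$
\sum_{I \neq \varnothing}\bigl[\Omega(P\setminus I,\.t-1)\ts\Omega(P,t) \. - \. \Omega(P\setminus I,\.t)\ts\Omega(P,t-1)\bigr].
$$

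The crux of the argument is the \emph{cross log-concavity} asserting that each summand above is non-negative:  for every upper ideal $I\subseteq X$,
$$
\Omega(P,t)\ts\Omega(P\setminus I,t-1) \ \ge \ \Omega(P,t-1)\ts\Omega(P\setminus I,t),
$$
equivalently, the growth ratio $\Omega(Q,t)/\Omega(Q,t-1)$ is non-decreasing as $Q$ grows by attaching an upper ideal on top.  I expect this to be the main obstacle.  It should follow either from a further application of the Shepp-lattice FKG (Lemma~\ref{t:FKG order polynomial}) with partition $Y=P\setminus I$, $Z=I$ and monotone functions encoding the cross constraints, or by induction on~$|I|$ reducing to the case where $I$ is a single maximal element (which is a straightforward computation).

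Granting the cross log-concavity, the $I=X$ summand alone gives
$$
\Omega(P,t)^2 - \Omega(P,t+1)\ts\Omega(P,t-1) \ \ge \ \Omega(P,t) - \Omega(P,t-1) \ = \ R(t-1).
$$
To lower bound $R(t-1)$, I would double-count every order-preserving map $f:P\to[t-1]$ by grouping it according to its top level $I=f^{-1}(\max f)$. This yields
$$
\Omega(P,t-1) \ = \ \sum_{I\neq\varnothing}\sum_{j=0}^{t-2}\Omega(P\setminus I,\.j) \ \le \ (t-1)\sum_{I\neq\varnothing}\Omega(P\setminus I,\.t-1) \ = \ (t-1)\.R(t-1),
$$
using monotonicity of $\Omega(Q,\cdot)$ in the second step. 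Hence $R(t-1)\ge \Omega(P,t-1)/(t-1)\ge \Omega(P,t-1)/(t+1)$.

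Finally, combining with the trivial estimate $\Omega(P,t+1)\le (t+1)^n$, I obtain
$$
\Omega(P,t)^2 - \Omega(P,t+1)\ts\Omega(P,t-1) \ \ge \ \frac{\Omega(P,t-1)}{t+1} \ \ge \ \frac{\Omega(P,t+1)\ts\Omega(P,t-1)}{(t+1)^{n+1}},
$$
which is the desired strict log-concavity~\eqref{eq:log-concave-strict}.
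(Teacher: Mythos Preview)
Your approach is genuinely different from the paper's and, modulo one point, it works. The paper refines the FKG argument of Theorem~\ref{thm:log-concave} directly: it factors the indicator functions as $g=g_1\cdots g_n$ and $h=h_1\cdots h_n$, telescopes a chain of FKG inequalities, and extracts a quantitative gain $(1-1/t^{n+1})$ at a single step (Lemma~\ref{lem:strict-FKG}) coming from the trivial observation that no $\bv$ can have $v_n=1$ and $v_n=t$ simultaneously. Your route instead rewrites the defect $\Omega(P,t)^2-\Omega(P,t+1)\ts\Omega(P,t-1)$ as a sum over nonempty upper ideals, shows each summand is nonnegative, and harvests the $I=X$ summand for the quantitative gain. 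Your identities $R(s)=\sum_{I\neq\varnothing}\Omega(P\setminus I,s)$ and $\Omega(P,t-1)=\sum_{I\neq\varnothing}\sum_{j=0}^{t-2}\Omega(P\setminus I,j)$ are both correct, the bound $R(t-1)\ge \Omega(P,t-1)/(t-1)$ follows, and the final chain of inequalities using $\Omega(P,t+1)\le (t+1)^n$ is fine.

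The gap is the cross log-concavity $\Omega(P,t)\ts\Omega(P\setminus I,t-1)\ge \Omega(P,t-1)\ts\Omega(P\setminus I,t)$, which you flag but do not prove. Your second suggested fix does not go through as stated: the induction on $|I|$ does reduce to the case of a single maximal element $x$, but that base case is \emph{not} a straightforward computation. It is exactly Lemma~\ref{lem:KS-mon} (applied to the dual poset, with $k=t-1$), and the paper proves it via FKG on Shepp's lattice --- which is your first suggested approach. So your argument is complete once you invoke Lemma~\ref{lem:KS-mon}, but you should not expect to bypass FKG here. In summary: your decomposition makes the source of strictness (the $I=X$ term) more transparent and the endgame more elementary, but it is not globally more elementary, since the cross log-concavity hides an FKG application of the same strength as the paper's.
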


	
\smallskip

Let \ts $Y=X$ \ts and \ts $Z=\varnothing$.
Let \ts $\ll=(L,\prec^\di)$ \ts be Shepp's lattice, let \ts $\mu = \mu_{Y,Z} \ts : \ts L \to \{0,1\}$ \ts
be the log-supermodular function defined in~$\S$\ref{ss:Shepp lattice}, and let \ts $t\geq 3$.
Without loss of generality, assume that \ts $X=[n]$ \ts and that this is a \defn{natural labeling}
of~$X$, i.e.\  \. $i<j$ \. for all \. $i \prec j$.
	
	For all \ts $1\le i\le n$,
	let \. $g_i,h_i : L \to \{0,1\}$ \. be two functions given by
	\begin{align*}
		g_i(\bv) \ &:= \
		\begin{cases}
			\. 1 & \text{if} \ \ v_i \geq 2 \\
			\. 0 & \text{otherwise}
		\end{cases} \qquad \text{and} \qquad
		h_i(\bv) \ := \
		\begin{cases}
			\. 1 & \text{if} \ \ v_i \leq t-1 \\
			\. 0 & \text{otherwise}.	
		\end{cases}
	\end{align*}
In notation of the proof of Theorem~\ref{thm:log-concave},
we have \. $g=g_1\cdots g_n$ \. and  \. $h=h_1\cdots h_n$.

It follows from the same argument above, that \ts $g_i$ \ts are $\prec^\di$-increasing,
while \ts $h_i$ \ts are $\prec^\di$-increasing, for all \ts $1\le i\le n$.
We now show that $g_i$ and $h_i$ are log-supermodular functions.
Indeed, note that
\[   v_i \geq 2 \ \text{ and } \ w_i \geq 2 \qquad \Longleftrightarrow \qquad \max\{v_i, w_i \} \geq 2  \ \text{ and } \ \min\{v_i, w_i \} \geq 2.  \]
This implies that 	\. $g_i(\bv) \. g_i(\bw) \ = \ g_i(\bv \wedge \bw) \.
g_i(\bv \vee \bw)$, as desired.
The same argument implies that $h_i$ is also a log-modular function.

\smallskip

\begin{lemma}
	\begin{equation}\label{eq:remove-g}
		 \frac{E_\mu(g_1\ts \cdots \ts g_n \cdot h)}{E_\mu(g_1\ts\cdots \ts g_n)}  \ \leq \
	\frac{E_\mu(g_n \ts h )}{E_\mu(g_n)}.
	\end{equation}
\end{lemma}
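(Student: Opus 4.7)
The plan is to prove the inequality \eqref{eq:remove-g} by peeling off one increasing factor $g_i$ at a time, via a repeated application of the FKG inequality with an adjusted log-supermodular measure. The target form is a ratio inequality, so the cleanest strategy is to establish a telescoping chain of single-step ratio inequalities.

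First, for each $1 \le k \le n-1$, introduce the modified weight
\[ \mu^{(k)} \ := \ \mu \cdot g_{k+1}\. g_{k+2} \cdots g_n \ts : \ts L \to \Rb_{\ge 0}. \]
I would verify that $\mu^{(k)}$ is still log-supermodular on Shepp's lattice $\ll$. This follows from the fact that $\mu$ is log-supermodular (as shown in $\S$\ref{ss:Shepp lattice}), each $g_i$ is log-modular (as just observed above the lemma), and the product of a log-supermodular function with a log-modular function is log-supermodular. Note also that for $t \ge 3$ the constant configuration $v_x \equiv 2$ lies in the support of $\mu^{(k)}$, so the denominators $E_\mu(g_{k+1}\cdots g_n)$ are strictly positive.

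Next, I would apply the FKG inequality (Theorem~\ref{thm:FKG}) with base measure $\mu^{(k)}$, to the $\prec^\di$-increasing function $g_k$ and the $\prec^\di$-decreasing function $h$ (recall $h = h_1 \cdots h_n$ is $\prec^\di$-decreasing, as noted in the proof of Theorem~\ref{thm:log-concave}). The reversed form of FKG gives
\[ E_{\mu^{(k)}}(\textbf{1}) \. \cdot \. E_{\mu^{(k)}}(g_k \. h) \ \le \ E_{\mu^{(k)}}(g_k) \. \cdot \. E_{\mu^{(k)}}(h), \]
which, after translating back to the original measure~$\mu$ and rearranging, becomes
\[ \frac{E_\mu(g_k\. g_{k+1} \cdots g_n \cdot h)}{E_\mu(g_k\. g_{k+1} \cdots g_n)} \ \le \ \frac{E_\mu(g_{k+1} \cdots g_n \cdot h)}{E_\mu(g_{k+1} \cdots g_n)}. \]

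Finally, I would chain these inequalities for $k = 1, 2, \ldots, n-1$ to obtain the telescoping bound
\[ \frac{E_\mu(g_1 \cdots g_n \cdot h)}{E_\mu(g_1 \cdots g_n)} \ \le \ \frac{E_\mu(g_2 \cdots g_n \cdot h)}{E_\mu(g_2 \cdots g_n)} \ \le \ \ldots \ \le \ \frac{E_\mu(g_n \cdot h)}{E_\mu(g_n)}, \]
which is precisely \eqref{eq:remove-g}. The main point I expect to require some care is verifying log-supermodularity of the modified measure $\mu^{(k)}$ and confirming that FKG is legitimately applicable on the same distributive lattice $\ll$ after re-weighting; once this is established, the remaining steps are purely formal rearrangements of the FKG inequality and a telescoping chain, and the monotonicity type of each $g_i$ and of $h$ ensures the inequalities all point in the correct direction.
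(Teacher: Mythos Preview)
Your proposal is correct and is essentially the same argument as the paper's: both absorb the tail $g_{k+1}\cdots g_n$ into a modified log-supermodular weight (your $\mu^{(k)}$ is the paper's $\mu_{k+1}$), apply the reversed FKG inequality to the increasing $g_k$ and decreasing $h$, and telescope. Your explicit remark that the denominators are nonzero is a nice addition that the paper leaves implicit.
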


\begin{proof}
Let \ts $\mu_i: L \to \Rb$ \ts be given by \. $\mu_i :=  (g_i  \ts \cdots \ts g_n) \ts \mu$,
for all \. $1\le i \le n$, and let \ts $\mu_{n+1}:=\mu$.
Note that function \ts $\mu_i$ \ts is a log-supermodular since it is a product of
log-modular and log-supermodular functions.
Therefore, for all \ts $2\le i \le n$, we have:
	\begin{equation}\label{eq:remove-g-induction}
		 \frac{E_{\mu_{i}}(g_{i-1} \ts h)}{E_{\mu_{i}}(g_{i-1} \ts )} \ \leq \ \frac{E_{\mu_{i}}( h)}{E_{\mu_{i}}(\textbf{1})} \ = \ \frac{E_{\mu_{i+1}}(g_i \ts h)}{E_{\mu_{i+1}}(g_i)}\,,
	\end{equation}
where the inequality is by  the FKG inequality (Theorem~\ref{thm:FKG}) applied to the  $\prec^\di$-increasing function~$g_{i-1}\ts$,
to the $\prec^\di$-decreasing function~$\ts h$, and to the log-supermodular function~$\ts\mu_i\ts$.
We conclude:
\[ 		 \frac{E_\mu(g_1\ts\cdots\ts g_n \cdot h)}{E_\mu(g_1\ts\cdots\ts g_n)}  \ = \  \frac{E_{\mu_2}(g_1 \ts h)}{E_{\mu_2}(g_1 )}   \leq \
\frac{E_{\mu_{n+1}}(g_n \ts h )}{E_{\mu_{n+1}}(g_n)}  \ = \ \frac{E_{\mu}(g_n \ts h )}{E_{\mu}(g_n)}\,,  \]
where the inequality is by consecutive applications of \eqref{eq:remove-g-induction}.
\end{proof}

\smallskip

Now, let \. $\eta_i:= (h_i\ts \cdots \ts h_n) \ts  \mu$ \. for all \. $1\le i\le n$, and let \. $\eta_{n+1}:=\mu$.
	Again, note that \ts $\eta_i$ \ts is a log-supermodular function.
	Observe  that \. $E_{\eta_i}[g_n] \. = \. E_{\eta_{i+1}}[g_n \ts h_i]$ \. for all \. $2\le i \le n$.
	This implies that
\begin{equation}\label{eq:remove-h}
	\frac{E_{\mu}(g_n \ts h )}{E_{\mu}(g_n)} \ = \  \prod_{i=2}^{n+1} \. \frac{E_{\eta_i}(g_n \ts h_{i-1})}{E_{\eta_i}(g_n)}\..
\end{equation}

We apply two different inequalities to the RHS of~\eqref{eq:remove-h}.
First, for all \ts $2\le i\le n$, we have:
\begin{equation}\label{eq:easy-FKG}
	\frac{E_{\eta_i}(g_n \ts h_{i-1})}{E_{\eta_i}(g_n)} \ \leq \ \frac{E_{\eta_i}(h_{i-1})}{E_{\eta_i}(\textbf{1})} \ = \   \frac{E_{\mu}(h_{i-1} h_i\ldots h_n)}{E_{\mu}(h_{i} \ldots h_n)}\,,
\end{equation}
where the inequality is due to the FKG inequality~(Theorem~\ref{thm:FKG}) applied to the  $\prec^\di$-increasing function~$\ts g_n$,
to the $\prec^\di$-decreasing function~$\ts h_{i-1}\ts$, and to the log-supermodular
function~$\ts\eta_i$.
Although \eqref{eq:easy-FKG} holds for \ts $i=n+1$ \ts by the same argument,
we will use the following stronger inequality instead.

\smallskip

\begin{lemma}\label{lem:strict-FKG}
	\begin{equation}\label{eq:strict-FKG}
	 \frac{E_{\eta_{n+1}}(g_n \ts h_{n})}{E_{\eta_{n+1}}(g_n)}  \ \leq \ \bigg(1-\frac{1}{t^{n+1}} \bigg) \frac{E_\mu(h_n)}{E_{\mu}(\textbf{\em 1})}.
	\end{equation}
\end{lemma}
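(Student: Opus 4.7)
The plan is to reduce inequality~\eqref{eq:strict-FKG} to an elementary estimate on $\Omega(P,t)$, using the fact that the natural labeling forces~$n$ to be a maximal element of~$P$. Since $\eta_{n+1} = \mu$ and $g_n, h_n$ are the indicators of $\{v_n \geq 2\}$ and $\{v_n \leq t-1\}$ respectively, the four expectations appearing in~\eqref{eq:strict-FKG} are encoded by the counts
\[
N(a) \, := \, \#\bigl\{ g : X \to [t] \text{ order-preserving},\ g(n) = a\bigr\}, \qquad a \in [t],
\]
together with $\Omega := \Omega(P,t) = \sum_{a=1}^t N(a)$. A short algebraic manipulation (cross-multiplying, expanding, and cancelling) reduces~\eqref{eq:strict-FKG} to the single inequality
\[
N(1)\cdot N(t) \ \geq\ \frac{1}{t^{n+1}}\,\bigl(\Omega - N(1)\bigr)\bigl(\Omega - N(t)\bigr).
\]

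The main step is a monotonicity property $N(a) \leq N(a+1)$ for $1 \leq a < t$. Because $n$ is maximal in $P$, for each order-preserving $g$ with $g(n)=a$ one can define $g'$ by $g'(x) := g(x)$ for $x \neq n$ and $g'(n) := a+1$; the only constraints involving $n$ take the form $g'(x) \leq g'(n)$ for $x \prec n$, and these are weaker than the original $g(x) \leq a$, so $g'$ remains order-preserving. The assignment $g \mapsto g'$ is clearly injective, so $N(a) \leq N(a+1)$. Summing, $\Omega \leq t \cdot N(t)$, and hence $N(t) \geq \Omega/t$.

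Combining this with the trivial bounds $N(1) \geq 1$ (witnessed by the constant map $g \equiv 1$) and $\Omega \leq t^n$ (from $|X|=n$ and the $t$ possible values at each element), I obtain
\[
N(1)\cdot N(t) \ \geq\ \frac{\Omega}{t} \ \geq\ \frac{\Omega^{2}}{t^{n+1}} \ \geq\ \frac{\bigl(\Omega - N(1)\bigr)\bigl(\Omega - N(t)\bigr)}{t^{n+1}},
\]
where the middle inequality uses $\Omega \leq t^n$ and the final one uses $\Omega - N(\cdot) \leq \Omega$. This is precisely the reduced form of~\eqref{eq:strict-FKG}, completing the proof. The only mildly nontrivial step is the monotonicity injection $g \mapsto g'$; everything else is routine, so I do not anticipate any real obstacle.
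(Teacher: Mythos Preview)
Your proof is correct and follows essentially the same approach as the paper. Both arguments reduce the claim to showing \. $N(1)\ts N(t) \geq t^{-(n+1)}\bigl(\Omega - N(1)\bigr)\bigl(\Omega - N(t)\bigr)$ \. and then exploit the maximality of~$n$ together with the trivial bounds \ts $N(1)\geq 1$ \ts and \ts $\Omega\leq t^n$; the paper organizes the algebra via the complements \ts $g_n'=1-g_n$, \ts $h_n'=1-h_n$ \ts and uses the identification \ts $N(t)=\Omega(P\sm\{n\},t)$ \ts to cancel, while you use the monotonicity injection to obtain \ts $N(t)\geq \Omega/t$, but these are the same estimate in different clothing.
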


\begin{proof}
	By a direct calculation, the claim  is equivalent to showing that
	\[     \frac{E_{\mu}(g_n) \. E_{\mu}(h_n) \, - \, E_\mu(g_n h_n) \. E_{\mu}(\textbf{1})}{E_{\mu}(g_n) \. E_{\mu}(h_n)} \ \geq \ \frac{1}{t^{n+1}}\.. \]
	Let \. $g_n',h_n':L \to \{0,1\}$ \. be given by \. $g_n'(\bv) :=  1- g_n(\bv)$ \. and \. $h_n'(\bv) := 1- h_n(\bv)$.
Then we have:
	\begin{align*}
			 g_n'(\bv) \, = \,
		\begin{cases}
			1 & \text{ if } \ v_n \. = \. 1\\
			0 & \text{ otherwise}
		\end{cases} \qquad \text{and}	\qquad		
h_n'(\bv)  \, = \,
	\begin{cases}
	1 & \text{ if } \ v_n \. = \. t\\
	0 & \text{ otherwise.}
\end{cases} 	\end{align*}
By the linearity of expectations, the claim is then equivalent to showing that
\begin{equation}\label{eq:strict-correlation}
   \frac{E_{\mu}(g_n') \. E_{\mu}(h_n') \ - \ E_\mu(g_n' h_n') \. E_{\mu}(\textbf{1})}{E_{\mu}(g_n) \. E_{\mu}(h_n)} \ \geq \ \frac{1}{t^{n+1}}.
\end{equation}
Now note that, since $n$ is a maximal element of $P$, we have:
\begin{align*}
	E_\mu(g_n') \ &= \  \big|\big\{ \bv \in L \, : \,   v_n \. = \. 1\big\}  \big| \ \geq \, 1,\\
	E_\mu(h_n') \ &= \  \big|\big\{ \bv \in L \, : \,   v_n \. = \. t\big\}  \big| \ = \,  \Omega(P\sm\{n\},t),\\
	E_{\mu}(g_n'h_n') \ &= \  \big|\big\{ \bv \in L \, : \,   v_n \. = \. 1 \. = \. t\big\}  \big| \ = \, 0,\\
	E_{\mu}(g_n)   \ &= \  \big|\big\{ \bv \in L \, : \,   v_n \. \geq  \. 2 \big\}  \big|  \  \leq \, t \. \Omega(P\sm\{n\},t),\\
	E_{\mu}(h_n)   \ &= \  \big|\big\{ \bv \in L \, : \,   v_n \. \leq  \. t-1 \big\}  \big|  \  \leq \,  \Omega(P,t) \, \leq \, t^n.
\end{align*}
The inequalities above directly imply \eqref{eq:strict-correlation}.
\end{proof}

\smallskip

\begin{proof}[Proof of Theorem~\ref{thm:log-concave-strict}]
Combining \eqref{eq:remove-g}, \eqref{eq:remove-h}, \eqref{eq:easy-FKG}, and \eqref{eq:strict-FKG},
we get:
\[  \frac{E_{\mu}(gh)}{E_{\mu}(g)}  \ \leq \  \bigg(1\.- \.\frac{1}{t^{n+1}} \bigg) \. \frac{E_\mu(h)}{E_\mu(\textbf{1})}\..\]
Using the values from \eqref{eq:expectation-as-OP}, we obtain:
\[  \frac{\Omega(P,t-2)}{\Omega(P,t-1)} \ \leq \  \bigg(1\.- \. \frac{1}{t^{n+1}} \bigg) \. \frac{\Omega(P,t-1)}{\Omega(P,t)}\..\]
The theorem now follows by substituting \. $t\gets t+1$.
\end{proof}

\smallskip

\begin{rem}
The term \. $\big(1+1/t^{n+1}\big)$ \. in~\eqref{eq:strict-FKG} is far from optimal
and can be improved in many cases.  In particular, note that in the proof of
Lemma~\ref{lem:strict-FKG} we used a separate calculation for an element~$n$
in \ts $X=[n]$.  Making this calculation for a general element \ts $x \in [n]$,
gives a lower bound with the term \. $\big(1+C/t^{\ell(x)+b(x)}\big)$, for some \ts
$C>0$. Thus \ts $x=n$ \ts  is the \emph{least optimal choice} for the lower bound,
and is made for clarity. \end{rem}

\medskip

\subsection{Kahn--Saks conjecture}\label{ss:OP-KS-mono}

The following interesting conjecture can be found in the solution to
Exc.~3.163(b) in~\cite{Sta-EC}.

\smallskip

\begin{conj}[{\rm \defn{\em Kahn--Saks monotonicity conjecture}}{}]\label{conj:KS-mon}
For a poset \ts $P=(X,\prec)$ \ts with \ts $|X|=n$ \ts elements,
the \ts \defn{scaled order polynomial} \. $\Omega(P,t)/t^n$ \. is weakly decreasing
on~$\ts\nn_{\ge 1}$.
\end{conj}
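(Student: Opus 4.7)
The inequality to prove rearranges to \. $(t+1)^n\ts\Omega(P,t) \ge t^n\ts\Omega(P,t+1)$, with the probabilistic interpretation that the quantity \ts $p_t := \Omega(P,t)/t^n$ \ts --- the probability that a uniformly random map \ts $X \to [t]$ \ts is order-preserving --- is weakly decreasing in~$t$. The most natural plan is a direct injection, in the spirit of the Bj\"orner--Wachs sorting of Section~\ref{s:injective}: construct an injective map from pairs \. $(v,a)$ \. with \. $v:X\to[t+1]$ \. order-preserving and \. $a:X\to[t]$ \. arbitrary, into pairs \. $(w,b)$ \. with \. $w:X\to[t]$ \. order-preserving and \. $b:X\to[t+1]$ \. arbitrary. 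Both sides sit inside the common universe of maps \. $X \to [t+1]\times[t]$, so the injection swaps which coordinate must be order-preserving. Starting from \ts $(v,a)$, I would apply a sorting algorithm to \ts $a$ \ts in a canonical order on \ts $X$ \ts (for example, along a linear extension induced by \ts $v$) to make it order-preserving within \ts $[t]$, while simultaneously recording the inverse moves as modifications of \ts $v$ \ts within \ts $[t+1]$. The extra room in \ts $[t+1]$ \ts relative to \ts $[t]$ \ts should provide precisely the degrees of freedom needed to recover \ts $(v,a)$ \ts from the output.

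A secondary approach would exploit the Shepp--lattice/FKG machinery from this section. On \. $L = [t+1]^X$ \. with \. $Y=X$, \. $Z=\nothing$ \. and the log-supermodular indicator \ts $\mu$ \ts of order-preserving maps, each function \. $g_x(\bv) := \mathbf{1}[v_x \le t]$ \. is $\prec^\di$-decreasing. Iterated FKG yields
\[
\Omega(P,t)\ts\Omega(P,t+1)^{n-1} \ \ge \ \prod_{x \in X}\. E_\mu(g_x),
\]
which would imply the conjecture provided \. $E_\mu(g_x)/E_\mu(\mathbf{1}) \ge t/(t+1)$ \. for every \ts $x \in X$. This reduces to the estimate \. $\Pr[v_x = t+1 \mid v \text{ order-pres.}] \le 1/(t+1)$, which unfortunately fails already for the maximum \ts $x$ \ts of a chain \ts $C_n$, where the probability equals \. $n/(t+n) > 1/(t+1)$ \. for \ts $n \ge 2$. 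Hence the naive FKG argument is insufficient, and a more refined combinatorial input is required.

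The main obstacle is the mismatch of ``scales'' between \ts $(t+1)^n$ \ts and \ts $t^n$: no single expectation-comparison on the fixed lattice \ts $L$ \ts naturally produces this factor, since any term-by-term FKG bound behaves asymptotically like a ratio of order polynomials rather than a ratio of cube sizes. The injection approach seems most promising precisely because the arbitrary coordinate \ts $a \in [t]^X$ \ts supplies exactly the slack needed to absorb surplus values \. $v_x = t+1$. As a fall-back, one may first pursue special cases: for a disjoint sum \. $P = Q + R$ \. the identity \. $\Omega(P,t) = \Omega(Q,t)\ts\Omega(R,t)$ \. makes the claim multiplicatively inductive, and a convolution identity handles the linear sum, which together would settle the series-parallel case. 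Combined with log-concavity (Theorem~\ref{thm:log-concave}) --- which already gives \ts $\Omega(P,t+1)/\Omega(P,t)$ \ts weakly decreasing in \ts $t$, though not at the required rate \ts $\bigl((t+1)/t\bigr)^n$ --- this strategy would resolve important subclasses while leaving the general conjecture open.
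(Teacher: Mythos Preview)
This statement is Conjecture~\ref{conj:KS-mon} in the paper --- an \emph{open conjecture}, not a theorem --- so there is no proof in the paper to compare against. Your proposal is, appropriately, a research plan rather than a proof, and you yourself concede that the FKG route fails and that the injection route is only ``most promising'' without being carried out.

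For context, the closest thing the paper proves is Proposition~\ref{prop:scaled}: for every integer \ts $k\ge 1$, one has \. $\Omega(P,t)/t^n \ge \Omega(P,kt)/(kt)^n$. The injection there is immediate --- given \ts $f:X\to[kt]$ \ts order-preserving, set \. $g(x):=\lfloor (f(x)-1)/k\rfloor+1$ \. and record the residue \ts $\beta(x)\in\{0,\ldots,k-1\}$; then \ts $(g,\beta)$ \ts determines \ts $f$, giving an injection \. $\BOm(P,kt)\hookrightarrow \BOm(P,t)\times[k]^X$. This is exactly the ``swap which coordinate is order-preserving'' idea you describe, but it works cleanly only because \ts $[kt]$ \ts factors as \ts $[t]\times[k]$ \ts in an order-compatible way. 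For consecutive integers \ts $t$ \ts and \ts $t+1$ \ts there is no such factorization, and the paper offers no injection; your sorting/recording scheme would need to produce from \ts $v:X\to[t+1]$ \ts an order-preserving \ts $w:X\to[t]$ \ts together with a residue in \ts $[t+1]^X$, which is precisely the hard step you left unspecified.

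Your FKG analysis is correct: the pointwise bound \. $\Pr[v_x=t{+}1]\le 1/(t{+}1)$ \. is false for maximal elements of long chains, so the naive product bound cannot close. The series-parallel reduction you sketch is fine and genuinely settles that subclass, but as you note, log-concavity alone (Theorem~\ref{thm:log-concave}) only gives that \ts $\Omega(P,t{+}1)/\Omega(P,t)$ \ts is decreasing, not that it is bounded by \ts $\bigl((t{+}1)/t\bigr)^n$. In short: nothing here is wrong, but nothing here proves the conjecture either, and the paper does not claim otherwise.
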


\smallskip

As Stanley points out in~\cite[Exc.~3.163(a)]{Sta-EC}, the conjecture
holds for \ts $t$ \ts large enough, since the coefficient \ts $[t^{n-1}] \ts\Omega(P,t)>0$.
Curiously, the proof is based on an elegant direct injection.
Now, to fully appreciate the power of this conjecture, let us derive from it the following
unusual extension of Theorem~\ref{thm:HP order poly}.

\smallskip
\begin{thm} \label{t:OP-via-KS-mon}
	Let \. $P=(X,\prec)$, let \. $\max(P)\subseteq X$ \. be the subset of maximal elements,
	and let \. $r:=|\max(P)|$ \. be the number of maximal elements. If Conjecture~\ref{conj:KS-mon}
	holds, then we have:
	\begin{equation}\label{eq:OP-analytic}
		\Omega(P,t) \ \geq \
		t^r \prod_{x \ts\in\ts X\sm \max(P)} \. \bigg(\frac{t}{b(x)}\,+\,\frac{1}{2}\bigg).
	\end{equation}
\end{thm}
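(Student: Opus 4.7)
The plan is to proceed by induction on $n = |X|$, closely following the pattern of the proof of Theorem~\ref{thm:HP order poly}. For the base case $n = 1$, both sides of \eqref{eq:OP-analytic} equal $t$. For the inductive step, fix a minimal element $x$ of $P$; if $x$ is also maximal (isolated) then $\Omega(P, t) = t \ts \Omega(P \sm x, t)$ and the factor of $t$ matches the change in $t^r$, so induction closes the case. Otherwise $b(x) \geq 2$, and since removing a minimal element preserves $\max(P)$ as well as all values $b(y)$ for $y \neq x$, the ratio of right-hand sides of \eqref{eq:OP-analytic} for $P$ and $P \sm x$ equals exactly $t/b(x) + 1/2$. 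Thus the inductive step reduces to proving
\[
\frac{\Omega(P, t)}{\Omega(P \sm x, t)} \ \geq \ \frac{t}{b(x)} \. + \. \frac{1}{2}\ts.
\]

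I would first reduce to the case where $x$ is the unique minimal element. Exactly as in the proof of Lemma~\ref{l:OP-HP-ind}, iteratively apply Lemma~\ref{l:main order polynomial} to remove minimal elements of the current subposet other than $x$; any newly exposed minima are automatically incomparable to $x$, since $x$ remains minimal throughout. The process terminates at a subposet $Q$ where $x$ is the unique minimum, and a short argument (identical to the one in Lemma~\ref{l:OP-HP-ind}) identifies $Q = B(x)$, so $|Q| = b(x)$, and
\[
\frac{\Omega(P, t)}{\Omega(P \sm x, t)} \ \geq \ \frac{\Omega(Q, t)}{\Omega(Q \sm x, t)}\ts.
\]
Since $x$ is the unique minimum of $Q$, conditioning on the value $f(x) = k$ yields the identity $\Omega(Q, t) = \sum_{k=1}^t \Omega(Q \sm x, k)$. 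Applying Conjecture~\ref{conj:KS-mon} to the $(b(x)-1)$-element poset $Q \sm x$, we obtain $\Omega(Q \sm x, k) \geq (k/t)^{b(x)-1} \ts \Omega(Q \sm x, t)$ for every $k \leq t$.

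The proof then concludes by invoking the elementary Faulhaber-type inequality
\[
\sum_{k=1}^t k^m \ \geq \ \frac{t^{m+1}}{m+1} \. + \. \frac{t^m}{2} \qquad (m \geq 1,\ t \geq 1),
\]
which follows from the trapezoidal-rule bound $\int_0^t x^m\,dx \leq \sum_{k=1}^t k^m - t^m/2$ for the convex function $x \mapsto x^m$ on $[0,t]$. Summing the Kahn--Saks bound over $k$ and applying the Faulhaber inequality with $m = b(x) - 1 \geq 1$ produces $\Omega(Q, t) \geq \bigl(t/b(x) + 1/2\bigr)\ts \Omega(Q \sm x, t)$, which closes the induction. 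The main obstacle I anticipate is the verification that the iterative reduction via Lemma~\ref{l:main order polynomial} really lands at $Q = B(x)$ (rather than at a larger subposet in which $x$ is not yet the unique minimum); this is exactly the delicate point from the proof of Lemma~\ref{l:OP-HP-ind}, and once handled, everything else is routine bookkeeping.
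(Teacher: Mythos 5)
Your proposal is correct and follows essentially the same route as the paper: induction that peels off minimal elements, the FKG reduction (Lemma~\ref{l:main order polynomial}, iterated as in Lemma~\ref{l:OP-HP-ind}) to the case where $x$ is the unique minimal element, the Kahn--Saks conjecture applied to the remaining $(b(x)-1)$-element poset, and the bounded Faulhaber inequality $\sum_{k=1}^t k^m \ge \tfrac{t^{m+1}}{m+1}+\tfrac{t^m}{2}$. In fact your form of that last bound (with $m+1$ in the denominator, valid for $m\ge 1$) is the correct one needed for the factor $t/b(x)+1/2$; the paper's displayed version, $F_m(t)\ge t/m+1/2$ for $m\ge 2$, is a typo.
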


\smallskip

Compared to~\eqref{eq:OP-HP}, the inequality \eqref{eq:OP-analytic}
adds \ts $\tfrac12$ \ts to every term in the product.
It would be interesting to prove this result unconditionally.\footnote{It would be
even more interesting to \emph{disprove} it, perhaps.}

\smallskip

\begin{proof}
	Denote
	$$
	F_m(t) \ := \ \frac{1}{t^m} \, \sum_{k=1}^t \. k^m\..
	$$
	Let us prove now, that if the Kahn--Saks monotonicity conjecture holds, then we have:
	\begin{equation}\label{eq:OP-F-ineq}
		\Omega(P,t) \ \geq \ \prod_{x \in X} \. F_{b(x)-1}(t)\ts.
	\end{equation}
To see this, first suppose that \ts $r=1$, so the poset~$P$ has a unique maximal element $x$.
Thus, $b(x)=n$ \ts in this case.
The number of order preserving functions for which $x$ has value $k$ is equal to \. $\Omega(P\sm x, t-k+1)$.
We have:	
$$
\Omega(P,t) \ = \ \sum_{k=1}^t \. \Omega(P \setminus x, k) \ \geq  \
\sum_{k=1}^t \. \Omega(P \sm x, t) \, \frac{k^{n-1}}{t^{n-1}} \ = \ \Omega(P \sm x, t) \. F_{b(x)-1}(t),
$$
where the inequality follows from the conjectured monotonicity. When \ts $r\ge 2$,
the rest of the proof of~\eqref{eq:OP-F-ineq}
follows verbatim the proof of Theorem~\ref{thm:HP order poly}.
	
Now note the following bounded version of the \defng{Faulhaber's formula}:
$$
F_m(t) \, \geq \, \frac{t}{m} \. + \. \frac{1}{2} \quad \text{for all} \ \ m \geq 2.
$$
This inequality is well-known and can be easily proved by induction.
Substituting it into~\eqref{eq:OP-F-ineq}, gives the result.
\end{proof}

\smallskip

In support of this conjecture we prove the following partial result.

\smallskip

\begin{prop}
\label{prop:scaled}
Let \ts $P=(X,\prec)$ \ts be a poset with \ts $|X|=n$ \ts elements,
and let \ts $k, \ts t\in \nn_{\ge 1}$\ts. Then:
$$
\frac{1}{t^n} \, \Omega(P,t) \, \geq \, \frac{1}{(kt)^n} \, \Omega(P,kt)\ts.
$$
Moreover, there is an injection which shows that the function \. $\Omega(P,t)\ts k^n \ts - \ts \Omega(P,kt) \in \SP$.
\end{prop}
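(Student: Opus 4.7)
The plan is to prove the stronger statement, namely that there is an explicit injection
$$
\Psi \colon \bigl\{g : X \to [kt] \text{ order-preserving}\bigr\} \ \hookrightarrow \ \bigl\{f : X \to [t] \text{ order-preserving}\bigr\} \.\times\. \bigl\{h : X \to [k]\bigr\},
$$
from which both the scaled-monotonicity inequality and the $\SP$ claim follow immediately. Rewriting the inequality as $\Omega(P,kt) \le k^n \.\Omega(P,t)$, the right-hand side is exactly the cardinality of the codomain of $\Psi$, so the existence of $\Psi$ is what we need.

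The map is simply ``division with remainder'' on values: given an order-preserving $g \colon X \to [kt]$, write
$$
g(x) \ = \ k\bigl(f(x)-1\bigr) \. + \. h(x), \quad \text{where} \quad f(x) := \lceil g(x)/k \rceil \in [t], \ \ h(x) := g(x) - k\bigl(f(x)-1\bigr) \in [k],
$$
and set $\Psi(g):=(f,h)$. The only nontrivial check is that $f$ is order-preserving: if $x \prec y$ then $g(x) \le g(y)$, which gives $\lceil g(x)/k\rceil \le \lceil g(y)/k\rceil$, i.e., $f(x)\le f(y)$. Injectivity is clear since $g$ is recovered from $(f,h)$ by the displayed formula. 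This establishes $\Omega(P,kt) \le \Omega(P,t)\ts k^n$, and dividing by $(kt)^n$ yields the scaled-monotonicity inequality.

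For the $\SP$ statement, it suffices to identify the complement of the image of $\Psi$ inside the codomain as a polynomial-time decidable set. Given a pair $(f,h)$, define $g(x) := k(f(x)-1) + h(x)$; then $g$ is the unique preimage candidate, and $(f,h) \in \mathrm{Im}(\Psi)$ \.if and only if\. $g$ is order-preserving. Since $f$ is already order-preserving, the only way $g$ can fail to be is that there exist $x \prec y$ with $f(x) = f(y)$ and $h(x) > h(y)$. This condition is checkable in polynomial time, so the set
$$
\mathcal{H}(P, k, t) \ := \ \bigl\{(f,h) \ :\ \exists\. x \prec y,\ f(x)=f(y),\ h(x) > h(y)\bigr\}
$$
has cardinality $\Omega(P,t)\. k^n - \Omega(P,kt)$ and is in~$\SP$.

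There is no substantive obstacle here: the only thing one could worry about is whether ``rounding up'' preserves the poset relation, which it obviously does, and whether the image of $\Psi$ is easily characterizable, which it is. Thus the proof is a single short injective argument that simultaneously delivers the inequality and the complexity statement.
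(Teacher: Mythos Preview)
Your proof is correct and follows the same approach as the paper: both construct the injection via division-with-remainder, sending an order-preserving $g:X\to[kt]$ to the pair (quotient in $[t]$, remainder in $[k]$), and both verify the $\SP$ claim by checking in polynomial time whether the reconstructed map is order-preserving. Your presentation is in fact slightly cleaner---using $\lceil g(x)/k\rceil$ rather than $\lfloor(g(x)-1)/k\rfloor+1$, and giving the explicit characterization of the non-image as those $(f,h)$ with $f(x)=f(y)$ and $h(x)>h(y)$ for some $x\prec y$---but the argument is the same.
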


\begin{proof}
Let \. $f \in \BOm(P,kt)$. Consider an increasing function \ts $g \in \BOm(P,t)$ \ts given by
$$g(x) \, := \, \left \lfloor \frac{f(x)-1}{k}\right\rfloor \. + \. 1,
$$
and let \ts $\beta: X \to \{0,1,\ldots,k-1\}$ \ts be given as the residue of~$f(x)$ modulo~$k$.
It is clear that the pair \ts $(g,\beta)$ \ts uniquely determines~$f$.
Then \. $\Omega(P,t)\ts k^n \ts - \ts\Omega(P,kt)$ \. is the number of pairs \ts $(g,\beta)$,
such that the map \ts $h:X \to [n]$ \ts given by \ts $h(x) := k(g(x)-1) + \beta(x) +1$ \ts
is not a linear extension, i.e.\ if \ts $h(x) > h(y)$ \ts for some \ts $x\prec y$.
The last condition can be verified in polynomial time, proving that the difference is in~$\SP$.
\end{proof}

\medskip

\subsection{Reverse monotonicity}\label{ss:OP-reverse-mono}
The following result at first appears counterintuitive until one
realizes that it's trivial asymptotically, when \ts $t\to \infty$.
Just like the Kahn--Saks monotonicity conjecture, the small values
of~$t$ is where the difficulty occurs.

\smallskip

\begin{thm}\label{t:reverse-width}
Let \ts $P=(X,\prec)$ \ts be a finite poset of width~$w$. Then the function \.
$\Omega(P,t)/t^w$ \. is weakly increasing on all \. $t \in \nn_{\ge 1}$\ts.
\end{thm}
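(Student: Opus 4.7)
The inequality is equivalent to $t^w \,\Omega(P,t+1) \,\ge\, (t+1)^w \,\Omega(P,t)$ for $t \ge 1$, which I would establish by constructing a direct injection
\[
\Phi \,:\, \BOm(P,t) \,\times\, [t+1]^w \,\hookrightarrow\, \BOm(P,t+1) \,\times\, [t]^w,
\]
where $\BOm(P,t)$ denotes the set of order-preserving maps $g : P \to [t]$. The first step is to invoke Dilworth's theorem to fix a chain decomposition $X = C_1 \sqcup \cdots \sqcup C_w$ of $P$ into $w$ chains; write $C_i = (x_{i,1} \prec \cdots \prec x_{i,c_i})$, and let $i(x)$ denote the index of the chain containing $x$.

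Given $(g,(a_1,\ldots,a_w)) \in \BOm(P,t) \times [t+1]^w$, I would form the candidate map $\tilde h : P \to [t+1]$ defined by
\[
\tilde h(x) \,:=\, g(x) \,+\, \mathbf{1}\bigl[g(x) \ge a_{i(x)}\bigr],
\]
which on each chain $C_i$ shifts values $\ge a_i$ up by one (equivalently, leaves a gap at height $a_i$). Then $\tilde h$ is automatically weakly increasing along each chain, but it can fail cross-chain order-preservation precisely at pairs $x \prec y$ with $x \in C_i$, $y \in C_j$ for $i \neq j$, $g(x) = g(y)$, and $a_i \le g(x) < a_j$. I would repair such violations by a promotion cascade, in the spirit of Proposition~\ref{p:HP-injection}: iteratively swap the conflicting values along the affected chains until the candidate becomes a genuine $h \in \BOm(P,t+1)$. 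The output $b_i \in [t]$ would combine the original $a_i \in [t+1]$ with a record of the promotions applied to chain $C_i$, using a local bijection onto $[t]$; that such a bijection fits is the accounting that reflects the factor $t/(t+1)$ per chain, absorbed by the extra ``slot'' created by the shift of $h|_{C_i}$ relative to $g|_{C_i}$.

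The main obstacle is to establish injectivity of $\Phi$: one must show that the promotion cascade is confluent and that its steps can be uniquely reversed, and verify that the encoding $a_i \mapsto b_i$ is compatible with the shifts so that $(h,\vec b)$ determines $(g,\vec a)$ uniquely. The delicate point is that cross-chain cover relations can run in both directions, so a correction on one chain may trigger further corrections on several others; the bookkeeping must be organized so every change is traceable through $\vec b$. I expect this to be the technical heart of the argument, mirroring the promotion-demotion interplay from Section~\ref{s:injective} but with several chains interacting at once.
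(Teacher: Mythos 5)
Your write-up is a plan rather than a proof, and the missing part is exactly the load-bearing part. After Dilworth and the naive shift $\tilde h(x)=g(x)+\mathbf{1}[g(x)\ge a_{i(x)}]$, everything that remains to be done is left as an expectation: the ``promotion cascade'' is never defined (note that the promotion/demotion of Section~\ref{s:injective} acts on bijections, i.e.\ linear extensions, and does not directly make sense for order-preserving maps with repeated values — ``swapping the conflicting values'' of $x$ and $y$ with $g(x)=g(y)$ does nothing, so the repair must be some bumping procedure whose termination and output in $\BOm(P,t+1)$ both need proofs); the encoding $a_i\mapsto b_i\in[t]$ is only asserted to ``fit,'' with no argument that the data of which repairs were triggered on chain $C_i$ (their number and locations vary with $(g,\vec a)$) can be compressed into a single factor of $[t]$ together with the original $a_i\in[t+1]$; and injectivity — that $(h,\vec b)$ determines $(g,\vec a)$ — is precisely the confluence/reversibility statement you defer. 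Since the whole content of the theorem is the inequality $t^w\,\Omega(P,t+1)\ge (t+1)^w\,\Omega(P,t)$, and your argument establishes it only modulo these unproven steps, there is a genuine gap: no part of the proposal actually verifies the inequality in any nontrivial case beyond a single chain.

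For contrast, the paper proves Theorem~\ref{t:reverse-width} by a completely different and much softer route: an FKG-type correlation inequality on Shepp's lattice (Lemma~\ref{lem:KS-mon}) shows that removing a minimal (or, dually, maximal) element $x$ can only decrease the ratio $\Omega(P,k)/\Omega(P,t)$ relative to $\Omega(P\sm x,k)/\Omega(P\sm x,t)$; one then fixes a maximum antichain $H$ of size $w$ and peels off the elements of $X\sm H$ one at a time, reducing to the antichain $A_w$, where the ratio is exactly $k^w/t^w$. If you want to salvage an injective proof along your lines, you would at minimum need to (i) specify the repair procedure and prove it lands in $\BOm(P,t+1)$, (ii) prove it is step-by-step reversible given the output data, and (iii) give the counting argument that the per-chain record really fits in $[t]$; none of these is routine, and the cross-chain interactions you flag are exactly where a naive scheme breaks.
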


\smallskip

The proof is based on yet another application of the FKG inequality
in the following lemma of independent interest.

\smallskip

\begin{lemma}\label{lem:KS-mon}
Let \ts $P=(X,\prec)$ \ts be a finite poset and let \ts $t \geq k \ge 1$ \ts be positive integers.
Then, for every minimal element $x$ of~$P$, we have:
	\begin{equation}\label{eq:KS-mon}
	\frac{\Omega(P,k)}{\Omega(P,t)} \ \leq \  \.  \frac{\Omega(P\setminus x,k)}{\Omega(P\setminus x,t)}\..
\end{equation}
\end{lemma}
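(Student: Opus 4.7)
The plan is to prove \eqref{eq:KS-mon} by applying the FKG inequality on Shepp's lattice, in the same spirit as the proofs of Lemma~\ref{t:FKG order polynomial} and Lemma~\ref{l:main order polynomial}. First, I would dispatch the trivial edge case in which $x$ has no element strictly above it in~$P$: then $P$ is the disjoint union of $\{x\}$ and $P\sm x$, so $\Omega(P,t)=t\,\Omega(P\sm x,t)$ and $\Omega(P,k)=k\,\Omega(P\sm x,k)$, and \eqref{eq:KS-mon} reduces to $k/t\le 1$.

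For the main case, assume the set $U:=\{y\in X\sm x \. : \. y\succ x\}$ is nonempty. Take Shepp's lattice $\ll:=\ll_{X\sm x,\varnothing,t}$ with log-supermodular indicator $\mu$ of order-preservation on $P\sm x$. Note that since $Z=\varnothing$, the partial order $\prec^\di$ on $L$ is just the coordinatewise order. On this lattice, define
\[
M(\bv) \ := \ \min_{y\in U} \. v_y \qquad \text{and} \qquad H(\bv) \ := \ \mathbf{1}\bigl[v_y\le k \ \text{for all} \ y\in X\sm x\bigr].
\]
Then $M$ is $\prec^\di$-increasing and $H$ is $\prec^\di$-decreasing.

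The key identification is $E_\mu(M)=\Omega(P,t)$ and $E_\mu(MH)=\Omega(P,k)$. To see this, observe that since $x$ is minimal and $U$ records all strict upper bounds of $x$ in $P\sm x$, every order-preserving extension of an order-preserving map $g:X\sm x\to[t]$ to~$P$ is obtained by choosing $v_x\in\{1,2,\dots,M(g)\}$; summing gives $E_\mu(M)=\Omega(P,t)$, and restricting to maps with values in $[k]$ (which forces $v_x\le M(g)\le k$) gives $E_\mu(MH)=\Omega(P,k)$. Trivially $E_\mu(\mathbf{1})=\Omega(P\sm x,t)$ and $E_\mu(H)=\Omega(P\sm x,k)$.

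Finally, apply the FKG inequality (Theorem~\ref{thm:FKG}) in its reversed form, valid for one $\prec^\di$-increasing and one $\prec^\di$-decreasing function against a log-supermodular measure:
\[
E_\mu(\mathbf{1})\.\cdot\.E_\mu(MH) \ \le \ E_\mu(M)\.\cdot\.E_\mu(H),
\]
which translates to $\Omega(P\sm x,t)\cdot\Omega(P,k) \le \Omega(P,t)\cdot\Omega(P\sm x,k)$, i.e.\ \eqref{eq:KS-mon}. The only substantive step is verifying the identities for $E_\mu(M)$ and $E_\mu(MH)$ via the minimality of~$x$; the rest is a direct invocation of the machinery already developed in~$\S$\ref{ss:Shepp lattice}, so I do not anticipate a serious obstacle.
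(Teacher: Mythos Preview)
Your argument is correct. Both your proof and the paper's use the FKG inequality on Shepp's lattice, but the setups differ in an instructive way. The paper takes $Y=\{x\}$ and $Z=X\sm x$, so that the twisted order on~$Z$ makes the two \emph{indicator} functions $g(\bv)=\mathbf{1}[v_x\le v_z\ \forall z\succ x]$ and $h(\bv)=\mathbf{1}[v_z\le k\ \forall z\in Z]$ decreasing and increasing respectively; the identifications then carry an extra factor of~$t$ on each side (from the free coordinate $v_x$ in $E(h)$ and $E(\mathbf{1})$) which cancels. You instead take $Y=X\sm x$, $Z=\varnothing$, so the lattice order is simply coordinatewise, and you ``integrate out'' the variable $v_x$ up front, encoding the number of valid extensions as the non-indicator weight $M(\bv)=\min_{y\succ x}v_y$. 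Your version trades the twisted Shepp order for a genuinely non-$\{0,1\}$ function in the FKG call; the paper's version does the reverse. Neither is more general, and both are one-line applications of Theorem~\ref{thm:FKG} once the setup is in place.
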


\smallskip

\begin{proof}
Let \ts $Y=\{x\}$ \ts and \ts $Z=X \setminus \{x\}$.
When $x$ is incomparable to every element of $Z$, we have
\begin{equation}\label{eq:OP-k-t}
	  \frac{\Omega(P,k)}{\Omega(P,t)} \ = \ \frac{k\. \Omega(P\setminus x,k)}{t\. \Omega(P\setminus x,t)}\.,
\end{equation}
and the result follows.

Thus, without loss of generality we can assume that \ts $x \prec z$ \ts for some \ts $z \in Z$.
	Let \. $g,h: L \to \{0,1\}$ \. be given by
$$
	g(\bv)  \, := \, \left\{
\aligned\. 1 & \quad \text{if} \ \ v_x  \. \leq \. v_z \ \ \. \text{for all} \ \ z\in B(x), \, z\ne x \\
	\. 0 & \quad\text{otherwise}, \endaligned \right.
$$
$$
 h(\bv) \, := \, \left\{ \aligned \. 1 & \quad \text{if} \ \  v_z \leq k \ \ \. \text{for all} \ \  z \in Z\\
	\. 0 & \quad \text{otherwise}. \endaligned\right.
$$
To show that \ts $g$ \ts is \ts $\prec^\di$-decreasing,
it suffices to check that,
\begin{equation*}
g(\bw)\ts =\ts 1 \quad \text{and} \quad \bv \prec^\di \bw \quad \Longrightarrow \quad  g(\bv)\ts = \ts 1.
\end{equation*}
Note that for every \ts $z \in Z$ \ts such that \ts $x \prec z$, we have:
\[  v_x \ \leq \  w_x \ \leq \ w_z \ \leq \ v_z, \]
where the first and the third inequality is because \ts $\bv \prec^\di \bw$\ts,
and the second inequality is because \ts $g(\bw)=1$.
This implies that \ts $g(\bv)=1$, and thus \ts $g$ \ts is
a $\prec^\di$-decreasing function.

Similarly, to show that \ts $h$ \t is $\prec^\di$-increasing,
it suffices to check that
\begin{equation*}
h(\bv)\ts =\ts 1 \quad \text{and} \quad \bv \prec^\di \bw \quad \Longrightarrow \quad  h(\bw)\ts = \ts 1.
\end{equation*}
Note that for every \ts $z \in Z$, we have:
\[  w_z \ \leq \  v_z \ \leq \ k, \]
where the first  inequality is because \ts $\bv \prec^\di \bw$\ts, and the second inequality is because \ts $h(\bv)=1$.
This implies that \ts $h(\bw)=1$, and thus \ts $h$ \ts is a $\prec^\di$-increasing function.

		Now observe that the um \ts
		$E(gh)$ \ts counts \ts $\bv \in L$ \ts for which \ts
		$v_x \leq k$. Indeed, by the assumption there exist \ts $z \in Z$,
such that \ts $x \prec z$.  This implies
		\[ v_x \ \leq \ v_z \ \leq \ k, \]
		where the first inequality is because \. $g(\bv)=1$\., and the second inequality is because \. $h(\bv)=1$.
		Thus, \ts $E(gh)$ counts the number of order preserving maps \. $f: X \to [k]$, i.e.  $E(gh)=\Omega(P,k)$.
		It is also straightforward to verify that
		\begin{align*}
			E(g)  \, = \, \Omega(P,t), \quad E(h) \, = \,  t \. \Omega(P \setminus x,k) \quad \text{and}
\quad E(\textbf{1}) \, = \, t \. \Omega(P\setminus x,t).
		\end{align*}
The lemma now  follows from the FKG inequality~(Theorem~\ref{thm:FKG}).
\end{proof}

\smallskip

\begin{proof}[Proof of Theorem~\ref{t:reverse-width}]
Let \ts $H$ \ts be a maximal antichain in \ts $P$ \ts of width~$w$.
Note that Lemma~\ref{lem:KS-mon} can also be applied to maximal
elements~$x$ of~$P$, by considering the dual poset \ts $P^\ast$.
Now, by consecutively removing elements in \ts $X \setminus H$,
by Lemma~\ref{lem:KS-mon}, we get
	\begin{equation*}
	\frac{\Omega(P,k)}{\Omega(P,t)} \ \leq \  \.  \frac{\Omega(P',k)}{\Omega(P',t)} \ = \  \frac{k^{w}}{t^{w}}\,,
\end{equation*}
where \ts $P'=P|_H$ \ts is the subposet of $P$ restricted to~$H$.
This implies the result.
\end{proof}

\smallskip

We conclude with another conjecture motivated by~\eqref{eq:OP-k-t} in the proof of Lemma~\ref{lem:KS-mon}.

\smallskip

\begin{conj}\label{conj:KS-FKG}
	Let \ts $P=(X,\prec)$ \ts be a finite poset, and let
\. $t \geq k\ge 1$ \. be positive integers.
	Then there exists \ts $x \in X$, such that
	\begin{equation}\label{eq:KS-FKG}
	  \frac{\Omega(P,k)}{\Omega(P,t)} \ \geq \  \frac{k\. \Omega(P\setminus x,k)}{t\. \Omega(P\setminus x,t)}.
	\end{equation}
\end{conj}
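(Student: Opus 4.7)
The natural plan is to reduce the conjecture to a monotonicity statement for a single ratio.  Take $x$ to be any minimal element of~$P$; by passing to the dual poset, a maximal element works equally well. Define
\[
\psi_x(s) \, := \, \frac{\Omega(P,s)}{s\ts\Omega(P\setminus x, s)}\..
\]
Then \eqref{eq:KS-FKG} is exactly the statement $\psi_x(k) \geq \psi_x(t)$, so by telescoping it suffices to prove the one-step claim $\psi_x(s) \geq \psi_x(s+1)$ for every $s\in \nn_{\ge 1}$, i.e.\
\[
(s+1)\ts\Omega(P,s)\ts\Omega(P\setminus x, s+1)  \, \geq \, s\ts\Omega(P, s+1)\ts\Omega(P \setminus x, s).
\]

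To clarify what is being claimed, observe that since $x$ is minimal, for each order-preserving $f : X \setminus \{x\} \to [s]$ the number of extensions to an order-preserving map on all of~$P$ equals $M(f) := \min\{f(y) : y \text{ covers } x\}$.  Consequently $\psi_x(s) = E\bigl[M(f)/s\bigr]$, where $f$ is uniform over the order-preserving maps counted by $\Omega(P\setminus x, s)$, and the one-step claim asserts that the normalized expected minimum $M(f)/s$ is decreasing in~$s$.  This is intuitive because as $s\to\infty$ the discrete distribution converges to that of a continuous random order-preserving map into~$[0,1]$; already for a single uniform variable $U$ on~$[s]$ one has $E[U/s] = \tfrac12 + \tfrac{1}{2s}$, which decreases in~$s$.

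My first attempt would be a refined FKG argument on Shepp's lattice $\ll_{\{x\},X\setminus\{x\},s+1}$ from $\S$\ref{ss:Shepp lattice}, in the spirit of the passage from Theorem~\ref{thm:log-concave} to Theorem~\ref{thm:log-concave-strict}. Straight FKG reproduces Lemma~\ref{lem:KS-mon}, which is the opposite inequality; the point is to exploit the extra ``boundary'' contribution coming from configurations where $v_x$ attains the extreme value~$s+1$.  Following the decomposition strategy of Lemma~\ref{lem:strict-FKG}, one would split the relevant expectations into single-coordinate log-modular factors $g_i, h_i$ and then quantitatively account for the isolated configurations, with the aim of extracting precisely the $(s+1)/s$ factor separating Lemma~\ref{lem:KS-mon} from~\eqref{eq:KS-FKG}.

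If the FKG route stalls, the fallback is a direct injection, in the style of Sections~\ref{s:injective}, \ref{s:OP-inj}, and~\ref{s:sid}.  Specifically, one would seek an injection from the set counted by $k\ts\Omega(P,t)\ts\Omega(P\setminus x, k)$ to the set counted by $t\ts\Omega(P,k)\ts\Omega(P\setminus x, t)$, built by a local promotion/demotion on the fiber of covers of~$x$ as in $\S$\ref{s:injective}.  I expect the main obstacle to lie in the case when $x$ has several pairwise incomparable covers: the single-coordinate tricks of Lemma~\ref{lem:strict-FKG} no longer separate cleanly, and any direct injection must simultaneously track the constraint on $\min_y f(y)$ as $s$ varies.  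Resolving this multi-cover case --- perhaps by induction on the upper ideal $B(x)$, or by partitioning the covers of~$x$ by their common upper bounds --- is where I expect the real work to concentrate.
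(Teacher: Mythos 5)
There is nothing here to verify as a proof: the statement you are addressing is Conjecture~\ref{conj:KS-FKG}, which the paper deliberately leaves open (it is listed among the open problems in~$\S$\ref{ss:finrem-OP}), and which, via the telescoping you describe, would imply the Kahn--Saks monotonicity Conjecture~\ref{conj:KS-mon} --- itself a well-known open problem. Your submission is a research plan, not an argument. The reformulation is fine as far as it goes: fixing a minimal $x$, writing $\psi_x(s)=\Omega(P,s)/(s\,\Omega(P\setminus x,s))=E_s[M(f)]/s$ with $M(f)=\min\{f(y):y\succ x\}$, and reducing to the one-step claim $\psi_x(s)\ge\psi_x(s+1)$ is a correct translation. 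But the one-step claim is never proved; both of your proposed routes (a refined FKG argument on Shepp's lattice in the style of Lemma~\ref{lem:strict-FKG}, or a direct injection) are left at the level of intentions, and you yourself flag the unresolved obstacles (extracting the boundary correction that separates Lemma~\ref{lem:KS-mon} from~\eqref{eq:KS-FKG}, and the case of several incomparable covers of~$x$). The heuristic that $E_s[M/s]$ ``should'' decrease because it converges to a continuous limit, together with the single-variable computation $E[U/s]=\tfrac12+\tfrac1{2s}$, gives no control at finite~$s$ for general posets and is not a proof step.

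Two further cautions. First, you are attempting a strictly stronger statement than the conjecture: the paper only asks for the existence of \emph{some} $x\in X$ (which may depend on $k$ and $t$), whereas you want every minimal element to work; there is no evidence in the paper that this strengthening is true, and if it were provable by a short FKG or injection argument, the authors would have obtained Conjecture~\ref{conj:KS-mon} rather than stating~\eqref{eq:KS-FKG} as a conjecture. Second, as you note, plain FKG on Shepp's lattice yields Lemma~\ref{lem:KS-mon}, an inequality pointing the other way (up to the factor $k/t$), so any successful argument must quantify a genuinely new effect; the analogy with the passage from Theorem~\ref{thm:log-concave} to Theorem~\ref{thm:log-concave-strict} is suggestive but has not been carried out, and there is no reason to expect the single-coordinate log-modular decomposition to produce exactly the needed factor. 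In short: the gap is that no proof exists here --- neither in your proposal nor in the paper, which states the result as an open conjecture.
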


\smallskip

\begin{prop}
Conjecture~\ref{conj:KS-FKG} \ts implies \ts
Conjecture~\ref{conj:KS-mon}.
\end{prop}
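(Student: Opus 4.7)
The plan is to deduce Conjecture~\ref{conj:KS-mon} from Conjecture~\ref{conj:KS-FKG} by induction on \ts $n=|X|$.

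The base case \ts $n=0$ \ts is immediate since \ts $\Omega(P,t)=1$ \ts for all \ts $t$, so \ts $\Omega(P,t)/t^n$ \ts is constant (hence weakly decreasing). For the inductive step, fix integers \ts $t\ge k \ge 1$ \ts and assume the monotonicity holds for every poset of size strictly less than \ts $n$. By Conjecture~\ref{conj:KS-FKG} applied to \ts $P$ \ts and \ts $(k,t)$, there exists an element \ts $x\in X$ \ts such that
\begin{equation*}
\frac{\Omega(P,k)}{\Omega(P,t)} \ \geq \ \frac{k\.\Omega(P\setminus x,k)}{t\.\Omega(P\setminus x,t)}\ts.
\end{equation*}
Apply the inductive hypothesis to the poset \ts $P\setminus x$, which has \ts $n-1$ \ts elements: the function \ts $\Omega(P\setminus x,s)/s^{\ts n-1}$ \ts is weakly decreasing in \ts $s\in \nn_{\ge 1}$\ts, and in particular
\begin{equation*}
\frac{\Omega(P\setminus x,k)}{\Omega(P\setminus x,t)} \ \geq \ \frac{k^{\ts n-1}}{t^{\ts n-1}}\ts.
\end{equation*}
Multiplying the two inequalities yields \ts $\Omega(P,k)/\Omega(P,t) \geq k^n/t^n$, which rearranges to \ts $\Omega(P,k)/k^n \geq \Omega(P,t)/t^n$, completing the induction.

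The only genuine content of the argument is the call to Conjecture~\ref{conj:KS-FKG}, which provides the crucial ``strip one element at a time'' estimate lining up with the extra factor of \ts $k/t$ \ts needed to convert the \ts $(n-1)$\ts-dimensional monotonicity on \ts $P\setminus x$ \ts into the \ts $n$\ts-dimensional one on \ts $P$. There is no real obstacle in the implication itself; the substantive difficulty is of course hidden in establishing Conjecture~\ref{conj:KS-FKG}, where, in contrast to the unconditional Lemma~\ref{lem:KS-mon}, one must identify an element \ts $x$ \ts producing the reverse inequality~\eqref{eq:KS-FKG}, not merely the one~\eqref{eq:KS-mon} obtained from the FKG inequality.
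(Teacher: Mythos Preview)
Your proof is correct and is essentially the same argument the paper has in mind: the paper simply says ``the proof follows the proof of the theorem above,'' meaning one iteratively strips off elements using Conjecture~\ref{conj:KS-FKG} to accumulate the factor \ts $k^n/t^n$, which is exactly your induction on~$n$.
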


\smallskip

The proof of this proposition follows the proof of the theorem above.

\medskip

\subsection{Daykin--Daykin--Paterson inequality}\label{ss:OP-Graham}
Let \ts $P=(X,\prec)$ \ts on \ts $|X|=n$ elements.
Fix an element \ts $x\in X$ \ts and an integer \ts $t\ge 1$.
Denote by \ts $\Om(P,t; \ts x,a)$ \ts the number of order preserving
maps \ts $g: X\to [t]$, such that \ts $g(x)=a$.  The following result 
resolves a conjecture by Graham in \cite[p.~129]{Gra}, made 
by analogy with Stanley's inequality~\eqref{eq:Sta}.  

\smallskip

\begin{thm}[{\em {\rm Daykin, Daykin and Paterson} \cite{DDP}, 
{\rm formerly} \defn{\rm Graham's conjecture}}{}]\label{conj:Graham}
Let \ts $P=(X,\prec)$ \ts be a finite poset, and
let \ts $x\in X$, let \ts $a, \ts t\in \nn_{\ge 1}$, and suppose \ts $1< a < t$.
Then:
\begin{equation}\label{eq:Graham}
	\Om(P,t; \ts x,a)^2 \, \ge \, \Om(P,t; \ts x,a+1) \. \cdot \. \Om(P,t; \ts x,a-1).
\end{equation}
\end{thm}

\smallskip

The proof in \cite{DDP} is based on a direct injection (see~$\S$\ref{ss:finrem-Graham}).  
Curiously, we can use Theorem~\ref{thm:log-concave-strict} to show that \eqref{eq:Graham}  
holds asymptotically.

\smallskip

\begin{cor}\label{t:Graham-asy}
Let \ts $P=(X,\prec)$ \ts be a finite poset, and let \ts $x\in X$.
Then, for every integer \ts $a\ge 1$,
	there exists  \. $T(P,x,a)>0$, such that  for all \. $t > T(P,x,a)$, we have:
	 $$
	 \Om(P,t; \ts x,a)^2 \, \ge \, \Om(P,t; \ts x,a+1) \. \cdot \. \Om(P,t; \ts x,a-1).
	 $$
	 Furthermore, if $x$ is  incomparable to any other element of $P$, then the inequality above is strict for sufficiently large  $t$.
\end{cor}

\begin{proof}
Fix an integer \ts $a\ge 1$.
First note that, if  $x$ is incomparable to every other element  in $P$, then equality in fact occurs in the inequality \eqref{eq:Graham}.
So we can assume that $x$ is comparable to some other element $y$ of $P$, and
we will  further assume that $y \prec x$, as the proof for the other case is analogous.
Denote by \.
$D:=\{y\in X \.:\.y \preccurlyeq x\}$ \. the lower order ideal of~$x$, and
let \ts $d:=|D|$.
Note that \. $D-x$ \. is a non-empty set by assumption.
Now observe that \. $\Om(P,t; \ts x,a)$ \. is a polynomial in~$\ts t$ \ts
	with the leading term
$$
\Omega(D-x,a)  \, \frac{e(P\sm D)}{(n-d)!} \ t^{n-d}\..
$$
Indeed, for every $\prec$-increasing function \ts $g: X\to [t]$,
we have \ts $g(y)\le a$ \ts for all \ts $y\in D$, which explains
the term \ts $\Omega(D-x,a)$.  For the remaining elements
\ts $z\in X\sm D$, we have no such restrictions as \ts $t\to \infty$,
and the number of such functions is asymptotically \ts $\sim e(P\sm D) \. t^{n-d}/(n-d)!$

Therefore, as \ts $t\to \infty$, the leading coefficient  of the polynomial
$$\Om(P,t; \ts x,a)^2 \, - \, \Om(P,t; \ts x,a+1) \. \cdot \. \Om(P,t; \ts x,a-1)
$$
is equal to
\[
\Big[\Omega(D-x,a)^2 \. - \.  \Omega(D-x,a+1) \.
\Omega(D-x,a-1) \Big] \, \frac{e(P-D)}{(n-d)!}\..  \]
This is strictly positive by Theorem~\ref{thm:log-concave-strict} (note that $D-x$ is a non-empty poset by assumption),
which implies the result.
\end{proof}

\medskip

\section{Bounds on the $q$-analogue} \label{s:q-analogue}

In this section we study the $q$-order polynomial generalization.
First, we present Reiner's short proof of the Bj\"orner--Wachs inequality.
Then, we give $q$-analogue of Shepp's inequality and study its consequences.

\subsection{Reiner's inequality}\label{ss:q-analogue-Reiner}
Most recently, Vic Reiner shared with us the following elegant approach
to the Bj\"orner--Wachs inequality which we reproduce with his
permission.\footnote{Vic Reiner, personal communication, March~17, 2022. }

\smallskip

\begin{thm}[{\rm Reiner, 2022}] \label{t:HP-q}
Let \ts $P=(X,\prec)$ \ts be a poset with \ts $|X|=n$ \ts elements.
Denote by \. $\cR(P)$ \. the set of all weakly order-preserving maps \.
$g: X\to \nn$, i.e.\ \. $g(x)\le g(y)$ \. for all \. $x\prec y$.
Let \. $|g| \ts := \ts \sum_{x\in X} \ts g(x)$.
Then:
\begin{equation}\label{eq:HP-q}
\sum_{g\in \cR(P)} \. q^{|g|} \ \geqslant_q  \  \. \prod_{x \ts \in X} \, \frac{1}{1-q^{b(x)}}\,,
\end{equation}
where the inequality between two power series is coefficient-wise.
\end{thm}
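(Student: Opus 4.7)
The plan is to exhibit an explicit weight-preserving injection
\[
\Phi \colon \nn^X \. \hookrightarrow \. \cR(P),
\]
from which \eqref{eq:HP-q} follows at once by comparing generating functions coefficient by coefficient. Concretely, for every $\alpha \colon X \to \nn$ set
\[
\Phi(\alpha)(x) \ := \ \sum_{y \ts \preccurlyeq \ts x} \alpha(y), \qquad x \in X.
\]
Heuristically, $\alpha(y)$ plays the role of the ``increment'' of the $\prec$-increasing function $\Phi(\alpha)$ at the element~$y$, and the RHS of~\eqref{eq:HP-q} is precisely the generating function enumerating tuples of non-negative increments weighted by the upper-ideal sizes $b(y)$.

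First I would verify that $\Phi(\alpha)\in \cR(P)$ for every $\alpha$: if $x \prec x'$, then $\{y:y\preccurlyeq x\} \subseteq \{y:y \preccurlyeq x'\}$, and non-negativity of $\alpha$ gives $\Phi(\alpha)(x) \le \Phi(\alpha)(x')$. Next, swapping the order of summation,
\[
|\Phi(\alpha)| \ = \ \sum_{x \in X} \sum_{y \ts \preccurlyeq \ts x} \alpha(y) \ = \ \sum_{y \in X} b(y)\. \alpha(y),
\]
so that
\[
\sum_{\alpha \in \nn^X} q^{|\Phi(\alpha)|} \ = \ \prod_{y \in X} \sum_{k \ge 0} q^{k \ts b(y)} \ = \ \prod_{y \in X} \frac{1}{1-q^{b(y)}}\ts,
\]
which is exactly the RHS of~\eqref{eq:HP-q}. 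Finally, $\Phi$ is injective: if $\alpha \ne \alpha'$ and $y$ is a $\prec$-minimal element where they differ, then $\Phi(\alpha)(y) - \Phi(\alpha')(y) = \alpha(y) - \alpha'(y) \ne 0$ by the minimality of~$y$. Combining these observations yields \eqref{eq:HP-q}: the coefficient of $q^m$ on the LHS counts all $g\in \cR(P)$ with $|g|=m$, while the coefficient of $q^m$ on the RHS counts the tuples $\alpha\in \nn^X$ with $\sum_y b(y)\ts\alpha(y)=m$, and the injection $\Phi$ sends the latter into the former weight-preservingly.

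I do not foresee a genuine obstacle --- the argument is essentially M\"obius inversion on $P$, with the ``correct'' direction chosen so that the inverse is automatic. The one subtlety worth emphasizing is that $\Phi$ is \emph{not} surjective in general: its image misses any $g \in \cR(P)$ for which the M\"obius inverse $\alpha(y) = \sum_{x \preccurlyeq y} \mu_P(x,y)\ts g(x)$ has a negative coordinate. Such $g$ exist precisely when $P$ has an element covering two incomparable elements (e.g.\ for the ``V'' poset $a,\ts b \prec c$, the constant function $g \equiv 1$ is not in the image), which is consistent with Proposition~\ref{p:HP-forest}: the inequality \eqref{eq:HP-q} is strict exactly when $P$ is not an ordered forest, and becomes an equality of power series when it is.
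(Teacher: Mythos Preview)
Your proof is correct and is essentially the same as the paper's: writing $\Phi(\alpha)(x)=\sum_{y\preccurlyeq x}\alpha(y)$ is exactly the paper's $g=\sum_{y}m(y)\,\chi_{B(y)}$ with $\alpha=m$, and your minimal-element injectivity argument is the combinatorial version of the paper's unitriangularity observation. The only difference is packaging---you phrase it as ``sum over the lower ideal,'' the paper as ``nonnegative combination of upper-ideal indicators''---but the underlying injection and weight computation are identical.
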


\smallskip

Theorem~\ref{t:HP} follows from \defng{$P$-partition theory} of Stanley,
see~\cite[$\S$3.15]{Sta-EC}.  Indeed, recall that
$$
\sum_{g\in \cR(P)} \. q^{|g|} \, = \, \frac{F_P(q)}{(1-q)(1-q^2)\cdots (1-q^n)}\,,
$$
such that \ts $F_P(1)=e(P)$.  Here \ts $F_P(q)$ \ts denotes the sum of \ts
$q^{\maj(f)}$ \ts over all \ts $f\in \Ec(P)$, see~\cite{Sta-EC} for the
details.\footnote{The notation in~\cite{Sta-EC} is different but equivalent;
we change it for simplicity since the \defng{major index} plays only tangential
role in this paper.  There is also a minor subtlety here, that
\defng{Stanley's P-partition theory} needs to be applied to a \emph{natural
labeling} of~$X$, cf.~$\S$\ref{s:OP-inj}. }
Taking \ts $0< q <1$, multiplying both sides of~\eqref{eq:HP-q} by
\. $(1-q)(1-q^2)\cdots (1-q^n)$, and taking the limit \. $q\to 1-$, gives the
Bj\"orner--Wachs inequality~\eqref{eq:HP}.

\smallskip

\begin{proof}[Proof of Theorem~\ref{t:HP-q}]
Interpret the RHS of~\eqref{eq:HP-q} as the GF for maps \. $g\in \cR(P)$ \. which
are obtained as a nonnegative integer linear combination of the characteristic
functions of upper order ideals:
$$g \.= \, \sum_{x\in X} \, m(x) \.\chi_{B(x)}\., \ \ \. \text{where} \ \ m(x) \in \nn \ \ \text{for all} \ \ x\in X.
$$
Note that characteristic functions \ts $\chi_{B(x)}$ \ts are linearly independent
because in the standard basis \ts $\chi_{y}$, \ts $y\in X$, the transition matrix is unitriangular.
Since now \. $|g| = \sum_{x\in X} \. m(x) \. b(x)$, the result follows immediately.
\end{proof}

\smallskip

\begin{ex}
Consider a poset \ts $P=(X,\prec)$ \ts with \ts $X=\{a,b,c,d\}$ \ts
and \ts  $a\prec \{b,c\} \prec d$, so \ts $P\simeq C_2\times C_2$.
The RHS of~\eqref{eq:HP-q} as in the proof
above is the GF for \ts $g\in \cR(P)$, such that \ts $g(a)+g(d) \ge g(b) + g(c)$.
Not all \ts $g\in \cR(P)$ \ts satisfy this property, e.g.\ $g(a)=0$, \ts
$g(b)=g(c)=g(d)=1$ \ts does not.
\end{ex}
\smallskip

\begin{rem}\label{r:G-M}
In principle, there is a way to convert the natural injection as in
the proof above into an injection as in Proposition~\ref{p:HP-injection}.
The idea is to make the multiplication by \. $(1-q)\cdots (1-q^n)$ \.
to be effective by using the \defng{involution principle} of Garsia
and Milne~\cite{GM}.  See also~\cite{Gre} which comes closest in this
special case.  Note that the resulting maps tend to be hard
to compute, sometimes provably so, see e.g.~\cite{KP}.
\end{rem}

\smallskip

\subsection{$q$-order polynomial} \label{ss:q-analogue-our}
For an integer \ts $t\ge 1$, define
$$\Omega_q(P,t) \, : = \, \sum_{g} \ q^{|g|-n}
$$
where the summation is over all order preserving maps \. $g: X\to [t]=\{1,\ldots,t\}$,
i.e.\ maps which satisfy \. $g(x)\le g(y)$ \. for all \. $x \prec y$.
This is the \defn{$q$-order polynomial}
corresponding to poset~$P$, see e.g.~\cite{Cha}.  Let us emphasize that here~$q$
is a formal variable, while \ts $t\ge 1$ \ts is an integer.

\smallskip

\begin{thm}[{\rm {\em{\defn{$q$-analogue of Shepp's inequality}}}}]\label{t:FKG-OP-q}
Let \ts $A$ \ts be the collection defined in \S\ref{ss:OP-induction-prelim}, and let \ts $C,C'$ \ts
be $Y$-minimizing collections w.r.t.\ partition \ $X=Y\sqcup Z$. Then,
\[
\Omega_q \big(C  \cap   C' \cap A,\ts t\big)  \. \cdot \.  \Omega_q\big(A,\ts t\big)
\quad \geqslant_q \quad \Omega_q \big(C \cap A,\ts t\big)  \. \cdot \.  \Omega_q\big( C' \cap A,\ts t\big)\.,	
\]
where the inequality holds coefficient-wise as a polynomial in~$q$,
for all integer \ts $t\ge 1$.
\end{thm}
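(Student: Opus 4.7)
The plan is to mirror the proof of Lemma~\ref{t:FKG order polynomial} and replace the classical FKG inequality with Bj\"orner's $q$-FKG inequality, applied to Shepp's lattice $\ll_{Y,Z,t} = (L, \prec^\di)$ now equipped with a $q$-weighted log-supermodular measure. We reuse the same indicator functions $g, h : L \to \{0,1\}$ for the events $C$ and $C'$ as in the proof of Lemma~\ref{t:FKG order polynomial}; both are $\prec^\di$-decreasing by the $Y$-minimizing hypothesis, and that verification carries over verbatim.

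The key new ingredient is to verify that the $q$-weighted function
\[
\tilde\mu(\bv) \, := \, \mu_{Y,Z}(\bv) \. q^{|\bv|-n}, \quad \text{where} \ \ |\bv| := \sum_{x \in X} v_x\.,
\]
is log-supermodular into $\Rb_{\ge 0}[q]$ in the coefficient-wise sense. This reduces to showing that $q^{|\bv|-n}$ is log-modular on Shepp's lattice. Using the explicit lattice operations from~\eqref{eq:She1}, for every $x \in X$ (whether $x \in Y$ or $x \in Z$) one has $\{(\bv \wedge \bw)_x, (\bv \vee \bw)_x\} = \{\min(v_x, w_x), \max(v_x, w_x)\}$, so $(\bv \wedge \bw)_x + (\bv \vee \bw)_x = v_x + w_x$. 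Summing over $x$ gives $|\bv \wedge \bw| + |\bv \vee \bw| = |\bv| + |\bw|$, i.e., $q^{|\cdot|-n}$ is log-modular. Multiplying by the log-supermodular $\mu$ yields log-supermodularity of $\tilde\mu$ in $\Rb_{\ge 0}[q]$.

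Applying Bj\"orner's $q$-FKG inequality to $\tilde\mu$ with the two $\prec^\di$-decreasing indicators $g, h$, we obtain
\[
\sum_{\bv \in L} \tilde\mu(\bv) \, \cdot \, \sum_{\bv \in L} g(\bv) h(\bv) \tilde\mu(\bv) \ \geqslant_q \ \sum_{\bv \in L} g(\bv) \tilde\mu(\bv) \, \cdot \, \sum_{\bv \in L} h(\bv) \tilde\mu(\bv)
\]
coefficient-wise in~$q$. Since $\mu(\bv)=1$ precisely when $\bv$ corresponds to an order-preserving map on $Y$ and on $Z$ separately, and $q^{|\bv|-n}$ records the correct statistic in the definition of $\Omega_q$, these four sums are precisely $\Omega_q(A,t)$, $\Omega_q(C \cap C' \cap A, t)$, $\Omega_q(C \cap A, t)$, and $\Omega_q(C' \cap A, t)$, respectively, which yields the theorem.

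The main obstacle is invoking the appropriate formulation of Bj\"orner's $q$-FKG inequality, i.e.\ a version that permits a log-supermodular weight valued in $\Rb_{\ge 0}[q]$ (under the coefficient-wise partial order) together with real-valued, comonotone functions $g$ and $h$. Once this framework is established, the modularity of the coordinate-sum $|\cdot|$ on Shepp's lattice makes the $q$-deformation fully compatible with the classical argument, and no further combinatorial input beyond Lemma~\ref{t:FKG order polynomial} is needed.
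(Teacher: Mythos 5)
Your proposal is correct and is essentially the paper's own argument: the paper likewise repeats the proof of Lemma~\ref{t:FKG order polynomial} on Shepp's lattice, invoking the $q$-FKG inequality (Theorem~\ref{thm:q-FKG}, extended from the rank function to arbitrary modular functions in Remark~\ref{r:Bjorner}) with the modular function $r(\bv)=\sum_{x\in X}v_x$, whose modularity is exactly your coordinate-wise computation. Folding the weight $q^{|\bv|-n}$ into the measure is merely a repackaging of $E_q(\,\cdot\,;\mu,r)$, so the ``appropriate formulation'' you flag as the main obstacle is precisely what Theorem~\ref{thm:q-FKG} together with Remark~\ref{r:Bjorner} already supplies.
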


\smallskip

The proof follows the original proof in~\cite{She}, with  the following \defng{$q$-FKG inequality} by Bj\"orner~\cite{Bjo}.
Let \. $L:=(L,\prec^\di)$ \. be a distributive lattice.
A function \. $r:L \to \Rb_{\geq 0}$ \. is called \ts \defn{modular} \ts if
\[ r(a) \. + \.   r(b) \ = \  r(a \wedge b) \. + \.  r(a \vee b) \quad \text{ for every } \. a,b \in L. \]

\smallskip

\begin{thm}[{\rm {\em{\defn{$q$-FKG inequality}}}, \cite[Thm~2.1]{Bjo}}]\label{thm:q-FKG}
Let \ts $\ll =(L,\prec)$ \ts be a finite distributive lattice,
let \. $\mu: L \to \Rb_{\geq 0}$ \. be a log-supermodular function, and let
\. $r:L \to \Rb_{\geq 0}$ \. be a modular function.
	Then, for every pair of $\prec^\di$-decreasing functions \. $g,h: L \to \Rb_{\geq 0}$, we have:
	\[ E_q(\textbf{\em 1}) \. E_q(gh) \ \geqslant_{q} \  E_q(g) \. E_q(h), \]
	where the inequality holds coefficient-wise as a polynomial in $q$, \.
	where
	\[ E_{q}(g) \, = \, E_{q}(g;\mu,r) \ := \  \sum_{x \in L} g(x) \. \mu(x) \. q^{r(x)},
\]
and \. $\textbf{\em 1}:L \to \Rb$ \. is given by \. $\textbf{\em 1}(x)=1$ \. for all \ts $x \in L$.
\end{thm}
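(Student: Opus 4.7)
The plan is to $q$-ify the classical proof of the FKG inequality, using the Ahlswede--Daykin Four Functions Theorem as a backbone. The crucial observation that makes the $q$-refinement work is that modularity of $r$ gives
$$q^{r(x) + r(y)} \ = \ q^{r(x \vee y) + r(x \wedge y)} \quad \text{for all } \ x, y \in L,$$
so the $q$-power factors balance exactly across joins and meets and never obstruct coefficient-wise comparisons.

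First I would set up the auxiliary polynomials $\alpha, \beta, \gamma, \delta \colon L \to \Rb_{\geq 0}[q]$ by
$$\alpha(x) \, := \, g(x)\.\mu(x)\.q^{r(x)}, \quad \beta(x) \, := \, h(x)\.\mu(x)\.q^{r(x)}, \quad \gamma(x) \, := \, \mu(x)\.q^{r(x)}, \quad \delta(x) \, := \, g(x)\.h(x)\.\mu(x)\.q^{r(x)}.$$
These are polynomials in $q$ with non-negative real coefficients. For every $x, y \in L$, using modularity of $r$ to cancel the $q$-powers on both sides, the desired monomial inequality
$$\alpha(x)\. \beta(y) \ \leqslant_q \ \gamma(x \vee y)\. \delta(x \wedge y)$$
reduces to the numerical inequality
$$g(x)\.h(y)\.\mu(x)\.\mu(y) \ \le \ g(x \wedge y)\.h(x \wedge y)\.\mu(x \vee y)\.\mu(x \wedge y),$$
which follows from log-supermodularity of $\mu$ together with the fact that $g, h$ are $\prec^\di$-decreasing (so $g(x \wedge y) \ge g(x), g(y)$ and similarly for $h$).

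Next I would prove a $q$-analogue of the Ahlswede--Daykin Four Functions Theorem: whenever non-negative-coefficient polynomial-valued functions $\alpha, \beta, \gamma, \delta$ satisfy the above pointwise dominance, one has
$$\Bigl(\sum_{x \in L} \alpha(x)\Bigr)\ts \Bigl(\sum_{y \in L} \beta(y)\Bigr) \ \leqslant_q \ \Bigl(\sum_{z \in L} \gamma(z)\Bigr)\ts \Bigl(\sum_{w \in L} \delta(w)\Bigr).$$
The proof mirrors the classical Ahlswede--Daykin induction on $|L|$. One picks a join-irreducible element $e \in L$, splits $L = L_0 \sqcup L_1$ into the ideal $L_0 = \{x : x \not\ge e\}$ and the filter $L_1 = \{x : x \ge e\}$, observes that both are distributive sublattices by distributivity of $L$, and reduces the two-sided sum to a combination of the inductive statements on $L_0$ and on $L_1$ together with a base case on a $2$-element lattice. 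Specializing the resulting $q$-Four Functions Theorem to our $\alpha, \beta, \gamma, \delta$ gives exactly $E_q(g) E_q(h) \leqslant_q E_q(\textbf{1}) E_q(gh)$.

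The main obstacle is the inductive combining step. In the real case one freely multiplies inequalities; in the $q$-coefficient setting this is only safe when the polynomials involved have \emph{non-negative} coefficients, because then $A \leqslant_q A'$ and $B \leqslant_q B'$ together imply $AB \leqslant_q A'B'$ via $A'B' - AB = A'(B' - B) + (A' - A)B$. Verifying that every intermediate expression built from $\alpha, \beta, \gamma, \delta$ during the induction retains non-negative coefficients --- which rests on $\mu \ge 0$ and $g, h \ge 0$ --- and that the modularity of $r$ continues to align $q$-powers after restriction to the sublattices $L_0, L_1$, is the technical heart of the argument. Once these bookkeeping details are handled, the classical telescoping of Ahlswede--Daykin transfers verbatim and yields the theorem.
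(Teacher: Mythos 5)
Your opening step is sound: because $r$ is modular, $\alpha(x)\beta(y)$ and $\gamma(x\vee y)\delta(x\wedge y)$ are monomials with the same exponent of~$q$, so the pointwise dominance reduces to the classical real inequality, which indeed follows from log-supermodularity of $\mu$ and the fact that $g,h$ are $\prec^\di$-decreasing. The gap is the lemma you hang everything on: the ``$q$-analogue of the Ahlswede--Daykin theorem'' is \emph{false} in the generality in which you state it (polynomial values with nonnegative coefficients, coefficient-wise pointwise dominance). Take $L=\{0,1\}$ to be the two-element chain and
\[
\alpha(0)=\alpha(1)=q, \quad \beta(0)=\beta(1)=1, \quad \gamma(0)=q, \quad \gamma(1)=1+q+q^2, \quad \delta(0)=1, \quad \delta(1)=q.
\]
All four hypotheses $\alpha(x)\.\beta(y)\leqslant_q\gamma(x\vee y)\.\delta(x\wedge y)$ hold, but $\bigl(\sum_x\alpha(x)\bigr)\bigl(\sum_y\beta(y)\bigr)=4\ts q$, while $\bigl(\sum_z\gamma(z)\bigr)\bigl(\sum_w\delta(w)\bigr)=(1+q)^3$ has $q$-coefficient equal to~$3$. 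The classical telescoping does not ``transfer verbatim'' precisely because the Ahlswede--Daykin base case uses division: the cross terms are bounded via $P+Q\le Z+PQ/Z$ with $Z=\gamma_1\delta_0$, and cancelling a common nonnegative-coefficient factor is not legitimate for coefficient-wise inequalities (e.g.\ $q\ts(1+q)\leqslant_q(1+q^2)(1+q)$, yet $q\not\leqslant_q 1+q^2$). Nor does the monomial form of your $\alpha,\beta,\gamma,\delta$ rescue the induction as described: after a single compression/fiber-summation step the values on the smaller lattice are sums of monomials of different $q$-degrees, i.e.\ exactly the general polynomial situation in which the lemma fails --- and the counterexample above lives on a two-element lattice, so even the base case of your induction breaks. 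Modularity of $r$ would have to be exploited throughout the induction, not only in the initial pointwise verification, and as written the proof does not go through.

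For the comparison you were asked to make: the paper gives no proof of Theorem~\ref{thm:q-FKG}; it is quoted from Bj\"orner~\cite[Thm~2.1]{Bjo}, and Remark~\ref{r:Bjorner} only records that Bj\"orner's argument, stated there for the rank function of~$L$, applies unchanged to an arbitrary modular function~$r$ --- which is the sole point where the paper's statement differs from the source. So the benchmark is Bj\"orner's proof, and whatever route it takes, it cannot be the naive polynomial-valued Ahlswede--Daykin induction, since that statement is false; to salvage your plan you would need a correct $q$-AD-type lemma with hypotheses strong enough to be preserved by the induction, and identifying such hypotheses is the substantive missing ingredient.
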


\smallskip

\begin{rem}\label{r:Bjorner}
Note that Theorem 2.1 in \cite{Bjo} assumes that \. $r:L \to \Rb_{\geq 0}$  \.
is the \emph{rank function} of the lattice~$L$. It is however straightforward
to show that the same proof still works when applied to any modular function~$\ts r$.
\end{rem}

\smallskip

\begin{proof}[Proof of Theorem~\ref{t:FKG-OP-q}]
	The proof follows the same argument as in  the proof of Lemma~\ref{t:FKG order polynomial},
	with the FKG inequality being replaced with Theorem~\ref{thm:q-FKG} applied to the modular function
	 \. $r:L \to \Rb_{\geq 0}$ \.  given by \. $r(\bv) \. := \. \sum_{x \in X} v_x$\..
\end{proof}

\smallskip

\begin{cor}\label{cor:main OP-q}
	Let \. $P=(X,\prec)$ \. be a poset, and let \. $x,y\in X$ \. be minimal elements.
	Then, for all \ts $t\in \nn_{\ge 1}$ \ts and \ts $q\in \rr_+$, we have:
	\[ {\Omega_q\big(P,\ts t\big)} \. \cdot \. {\Omega_q\big(P \sm \{x,y\},\ts t\big)}
\ \geq \  {\Omega_q\big(P \sm x, \ts t\big)}  \. \cdot \.  {\Omega_q\big(P\sm y,\ts t\big)}.
	\]
\end{cor}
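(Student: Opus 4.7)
The plan is to mimic the proof of Lemma~\ref{l:main order polynomial} verbatim, replacing the appeal to Lemma~\ref{t:FKG order polynomial} with the newly established $q$-analogue Theorem~\ref{t:FKG-OP-q}. The key point is that the four quantities appearing in the $q$-FKG correlation inequality factor nicely through $[t]_q := 1+q+\cdots+q^{t-1}$, so the spurious factors cancel between the two sides.

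Concretely, I would take the partition $Y := \{x,y\}$ and $Z := X \setminus Y$. Since $x$ and $y$ are minimal (hence incomparable, unless equal), the collection $A$ has no constraints involving $x$ or $y$. Taking the $Y$-minimizing collections
\[
C \ := \ \bigcap_{z \in B(x)\setminus x} \{x \precc z\}, \qquad C' \ := \ \bigcap_{z' \in B(y)\setminus y} \{y \precc z'\},
\]
as in~\eqref{eq:colC}, I would then identify $C \cap C' \cap A$ with all the defining relations of $P$, so that $\Omega_q(C \cap C' \cap A,t) = \Omega_q(P,t)$.

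For the other three terms, I would use the observation that if an element $w \in X$ is subject to no constraint inside a collection $D$, then a map $g: X \to [t]$ restricted to $X \setminus w$ is order-preserving iff $g$ itself is, and summing $q^{v_w-1}$ over $v_w \in [t]$ yields the factor $[t]_q$. This gives
\[
\Omega_q(C' \cap A, t) \. = \. [t]_q \.\Omega_q(P \setminus x, t), \quad \Omega_q(C \cap A, t) \. = \. [t]_q \.\Omega_q(P \setminus y, t), \quad \Omega_q(A,t) \. = \. [t]_q^2 \.\Omega_q(P \setminus \{x,y\}, t).
\]
Plugging these into Theorem~\ref{t:FKG-OP-q}, the factor $[t]_q^{\,2}$ appears on both sides and cancels, leaving precisely the coefficient-wise inequality
\[
\Omega_q(P,t) \.\cdot\. \Omega_q(P \setminus \{x,y\}, t) \ \geqslant_q \ \Omega_q(P \setminus x, t) \.\cdot\. \Omega_q(P \setminus y, t).
\]
Since this is a polynomial inequality in $q$ with nonnegative-coefficient difference, it evaluates to a numerical inequality for every $q \in \Rb_+$, which is what the corollary asserts.

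There is no real obstacle here beyond careful bookkeeping: the only place one could slip is in verifying the $[t]_q$ factor when an element is unconstrained, which amounts to the identity $\sum_{v=1}^t q^{v-1} = [t]_q$ applied once for each removed element and confirming that the shift $|g|-n$ rescales consistently under restriction $X \mapsto X \setminus w$ with $n \mapsto n-1$. Once this is checked, the cancellation of $[t]_q^{\,2}$ is automatic and the result follows directly from Theorem~\ref{t:FKG-OP-q}.
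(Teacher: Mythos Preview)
Your approach is exactly the paper's: same partition $Y=\{x,y\}$, same collections $C,C'$, same identification of the four $q$-order polynomials up to factors of $(t)_q$, and the same appeal to Theorem~\ref{t:FKG-OP-q}. The argument is correct for the stated conclusion.

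One overclaim to fix: after multiplying out, the factor $[t]_q^{\,2}$ does \emph{not} cancel coefficient-wise. From $[t]_q^{\,2}\cdot A(q)\geqslant_q [t]_q^{\,2}\cdot B(q)$ you cannot conclude $A(q)\geqslant_q B(q)$ in general (e.g.\ $(1+q)(1-q+q^2)=1+q^3$ has nonnegative coefficients while $1-q+q^2$ does not). What you can do is evaluate the coefficient-wise inequality at a fixed $q\in\rr_+$, obtaining a numerical inequality, and then divide by the positive real number $[t]_q^{\,2}$. That yields the corollary as stated. The paper makes precisely this point in Remark~\ref{r:q-analogue}, noting that the proof does \emph{not} establish the inequality coefficient-wise. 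So drop the intermediate $\geqslant_q$ claim and go directly to the numerical inequality for $q>0$.
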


\begin{proof}
Denote \. $(n)_q \ts :=  \ts 1+q+\ldots + q^{n-1}$.
	Let $C$ and $C'$ be as in \eqref{eq:colC}, and $A$ be as in \eqref{eq:colA}.
	Observe that
	\begin{alignat*}{2}
		& \Omega_q(C \cap C' \cap A,t) \ = \ \Omega_q(P,t), \qquad  &&  \Omega_q(C' \cap A,t) \ = \ q \ts (t)_q \. \Omega_q(P\sm x,t),\\
		&  \Omega_q(C \cap A,t) \ = \ q \ts (t)_q \. \Omega_q(P \sm y,t), \qquad  &&  \Omega_q(A,t) \ = \ q^2 \ts (t)_q^2 \.\ts \Omega_q(P \sm \{x,y\},t).
	\end{alignat*}
	The conclusion of the lemma now follows from  Theorem~\ref{t:FKG-OP-q}
	and the equation above.
\end{proof}

\smallskip

\begin{rem}\label{r:q-analogue}
Note that our proof does not show that the inequality in Corollary~\ref{cor:main OP-q} holds
 coefficient-wise as a polynomial in~$q$,
 since the derivation involves canceling the term \ts $q\ts (t)_q$.
It remains to be seen if a $q$-analogue of Theorem~\ref{thm:HP order poly} exists,
which hinges on finding an appropriate $q$-analogue for Lemma~\ref{l:main order polynomial}.
\end{rem}

\smallskip

We also have the following \defng{$q$-log-concavity} for order polynomials.

\smallskip

\begin{cor}\label{c:q-log-concavity}
	Let \ts $P=(X,\prec)$ \ts be a finite poset.
Then, for every integer \ts $t \geq 2$, we have:
\[ \Omega_q(P,t)^2  \  \geqslant_q \ \Omega_q(P,t+1) \. \cdot \. \Omega_q(P,t-1),\]
where the inequality holds coefficient-wise as a polynomial in~$q$.
\end{cor}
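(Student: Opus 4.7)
The plan is to mirror the proof of Theorem~\ref{thm:log-concave}, replacing the ordinary FKG inequality by the $q$-FKG inequality of Bj\"orner (Theorem~\ref{thm:q-FKG}), applied with the modular weight $r(\bv) := \sum_{x \in X} v_x$ on Shepp's lattice $\ll = \ll_{X,\varnothing,t}$. This is the same modular function already used in the proof of Theorem~\ref{t:FKG-OP-q}, so the $q$-weights will track cleanly.

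Concretely, I would take $Y = X$, $Z = \varnothing$ and reuse the very same indicator functions $g$ and $h$ from the proof of Theorem~\ref{thm:log-concave}: namely $g(\bv) = 1$ iff $v_x \geq 2$ for all $x$, and $h(\bv) = 1$ iff $v_x \leq t-1$ for all $x$. As shown there, $g$ is $\prec^\di$-increasing and $h$ is $\prec^\di$-decreasing. Weighting each $\bv$ by $q^{r(\bv)}$ and performing the shift $v_x \mapsto v_x - 1$ (which changes $r$ by exactly $n$) on the relevant sublattices, the counting identities in~\eqref{eq:expectation-as-OP} refine to
\[
E_q(\textbf{1}) \, = \, q^n \ts \Omega_q(P,t), \qquad E_q(gh) \, = \, q^{2n} \ts \Omega_q(P,t-2),
\]
\[
E_q(g) \, = \, q^{2n} \ts \Omega_q(P,t-1), \qquad E_q(h) \, = \, q^n \ts \Omega_q(P,t-1).
\]

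The key step is to apply $q$-FKG in the mixed $(\prec^\di\text{-increasing},\,\prec^\di\text{-decreasing})$ case to obtain the reverse inequality
\[
E_q(\textbf{1}) \ts E_q(gh) \ \leqslant_q \ E_q(g) \ts E_q(h).
\]
Since Theorem~\ref{thm:q-FKG} is stated only in the both-decreasing case, I would derive this mixed form by applying the theorem to the pair $(1-g,\,h)$, both of which are non-negative and $\prec^\di$-decreasing because $g$ takes values in $\{0,1\}$. Expanding and cancelling the common term $E_q(\textbf{1}) \ts E_q(h)$ --- using that coefficient-wise $\geqslant_q$ is preserved under subtraction --- yields the mixed inequality. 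Substituting the four values above and cancelling the common factor $q^{3n}$ gives
\[
\Omega_q(P,t) \ts \Omega_q(P,t-2) \ \leqslant_q \ \Omega_q(P,t-1)^2,
\]
and replacing $t$ by $t+1$ produces the statement of the corollary.

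The only delicate point is the $g \mapsto 1-g$ reduction to the mixed-direction $q$-FKG. This is short but must be written out carefully because the excerpt records only the both-decreasing version; once it is in place, the rest of the argument is an exact $q$-analogue of the proof of Theorem~\ref{thm:log-concave}, with the factor $q^n$ appearing each time the shift $v_x \mapsto v_x - 1$ is performed and cancelling from both sides at the very end.
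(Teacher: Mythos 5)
Your proposal is correct and is essentially the paper's argument: the paper's proof of this corollary simply repeats the proof of Theorem~\ref{thm:log-concave} with the FKG inequality replaced by the $q$-FKG inequality (Theorem~\ref{thm:q-FKG}) applied to the modular function $r(\bv)=\sum_{x\in X} v_x$ on Shepp's lattice. Your explicit $1-g$ reduction to the stated both-decreasing case of Theorem~\ref{thm:q-FKG}, and the bookkeeping of the $q^{n}$ shift factors, merely spell out details the paper leaves implicit.
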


\begin{proof}
		The proof follows the same argument as in  the proof of Theorem~\ref{thm:log-concave},
	with the FKG inequality being replaced with Theorem~\ref{thm:q-FKG} applied to the modular function
	\. $r:L \to \Rb_{\geq 0}$ \.  given by \. $r(\bv) \. := \. \sum_{x \in X} v_x$\..
\end{proof}

\bigskip

\section{Bounding the order polynomial by injection} \label{s:OP-inj}

\smallskip

Let \ts $P=(X,\prec)$ \ts be a poset with \ts $|X|=n$ \ts elements.
Denote by \ts $\BOm(P,t)$ \ts the set of order preserving maps \ts $P \to [t]$, so that \. $\Om(P,t)=|\BOm(P,t)|$.
Fix a \defn{natural labeling} of $X$, i.e.\ write \. $X=\{x_1,\ldots,x_n\}$, where \. $i<j$ \. for all \. $x_i \prec x_j$.

For a sequence \. $(a_1,\ldots,a_k)$ \. of distinct integers, a \defn{standardization}
is a permutation \. $\si=(\si_1,\ldots,\si_k)\in S_k$ \. with integers in the same
relative order:
$$
a_i < a_j \ \Leftrightarrow \ \si_i < \si_j \quad \text{for all} \quad 1\le i < j \le k\ts.
$$
For example, the standardization of \. $(4,7,6,3)$ \. is \. $(2,4,3,1)\in S_4$.

\smallskip

\begin{proof}[Proof of Theorem~\ref{thm:order_le}]
We construct an injection
$$\Psi: \, \Ec(P) \ts \times \ts [t]^n \. \to \ \BOm(P,t) \ts \times \ts S_n\ts.
$$
One can think of \. $[t]^n$ \. as an \emph{ordered set partition}
$$
[n] \ = \ B_1 \. \sqcup \. \ldots \. \sqcup B_t\.,
$$
where \. $B_i\subseteq [n]$ \. can be empty.  We use \. $\be=(B_1,\ldots,B_t)$ \. to denote this ordered
set partition.

Let \. $f \in \Ec(P)$ \. be a linear extension, and \. $\be=(B_1,\ldots,B_t)$ \.
be an ordered set partition as above.  Denote \. $b_i := |B_i|$,
where \. $1 \le i \le t$.  Let \. $\al = (a_1,\ldots,a_n)\in [t]^n$ \.
be a weakly increasing sequence
$$
\big(1,\ldots, 1, \ 2,\ldots, 2, \ \ \ldots \ \  , \ t, \ldots, t\big) \ \ \,
\text{with \. $b_i$ \. copies of \. $i$, \ for all \. $1\le i \le t$}\ts.
$$
By abuse of notation, we also use \ts $\al$ \ts to denote a function \. $\al: [n]\to [t]$ \. given by \.
$\alpha(i):=a_i$.

Define a function \. $g: \ts X \to [t]$ \. as \. $g(x_i) := \alpha\bigl(f(x_i)\bigr)$,
so that elements \. $f^{-1}(1)$, \ldots, $f^{-1}(b_1)$ are assigned value~$1$, elements \.
$f^{-1}(b_1+1)$, \ldots, $f^{-1}(b_1+b_2)$ are assigned value~$2$, etc. Observe that \.
$g \in \BOm(P,t)$ \. since~$\ts f$ \ts is increasing with respect to the poset order, and~$\ts \al$
is a weakly increasing function.

Next, define a permutation \. $\sigma\in S_n$ \. as follows. For each~$i$,
let \. $g^{-1}(i) = \big\{x_{i_1},\ldots,x_{i_k}\big\}$, where \. $i_1<\ldots<i_k$ \. and \ts $k=b_i$ \ts by construction.
Let \ts $s^{(i)} \in S_k$ \ts be the standardization of the sequence \. $\bigl(f(x_{i_1}),\ldots, f(x_{i_k})\bigr)$.
Now, rearrange the elements in \ts $B_i$ \ts according to~$s^{(i)}$, obtaining a sequence \.
$\gamma_{i}$ \.
whose standardization is~$s^{(i)}$.
The permutation $\sigma$ is then obtained by concatenating the resulting sequences, i.e.
$\sigma:=\gamma_{1} \gamma_{2}\ldots \gamma_{t} \in S_n$.
Finally, define \. $\Psi(f,\beta) := (g, \sigma)$.

\smallskip

To prove that \ts $\Psi$ \ts is an injection, we construct an inverse map \ts $\Psi^{-1}$.
Let \. $g \in \Omega(P,t)$ \. and \. $\sigma \in S_n$. Denote  \. $c_i :=|g^{-1}(i)|$,
for all \. $1\le i \le t$.  Let \. $\tau\in [t]^n$ \. be the sorted
sequence of values that the function~$g$ takes, i.e.\
$$\tau \ := \ \big(1,\ldots, 1, \ 2,\ldots, 2, \ \ \ldots \ \  , \ t, \ldots, t\big) \ \ \, \text{with \.
$c_i$ \. copies of \. $i$, \ for all \. $1\le i \le t$}\ts.
$$
Note that \ts $\tau$ \ts is the weakly increasing.
%
For each $i$, let
$$
C_i \ := \ \big\{\si_{c_1\ts+\ts\ldots \ts+\ts c_{i-1}\ts+\ts1} \., \. \ldots \., \. \si_{c_1\ts+\ts\ldots \ts+\ts c_i}\big\}
$$
consisting of a block of size $c_i$ of entries from~$\si$.
Denote by \. $\pi=(C_1,\ldots,C_t)$ \. the resulting ordered partition.

Finally, define a function \.
$h: X\to [n]$ \. obtained by rearranging the values on the $c_i$ elements in~$g^{-1}(i)$
according to the ordering in \.
$$
\big(\sigma_{c_1\ts+\ts\ldots \ts+\ts c_{i-1}\ts+\ts1} \., \. \ldots \., \.  \sigma_{c_1\ts+\ts\ldots \ts+\ts c_i}\big),
$$
i.e., so that their standardizations are the same permutations.
Let us emphasize that \ts $h$ \ts
is not necessarily a linear extension for general \ts $(g,\si)$ \ts as above.

Now take \. $\Psi^{-1} := (h,\pi)$, and observe that
$$
\Psi^{-1}\bigl(\Psi(f,\be)\bigr) \, = \, (f,\be)
$$
by construction.  This completes the proof.
\end{proof}

\smallskip

\begin{ex}
Let us illustrate the construction of \. $\Psi(f,\be)=(g,\si)$ \. in the proof above.
Let \. $P=(X,\prec)$ \. be a poset on \ts $n=7$ \. elements as in
Figure~\ref{f:injection}, where
\. $X=\{x_1,\ldots,x_7\}$ \. with the partial order~$\prec$ increasing downwards.
Note that we chose a natural labeling, see above.

Suppose \ts $t=3$.  Let \ts $f\in \Ec(P)$ \ts be a linear extension as in the figure, and let
\. $\beta=(B_1,B_2,B_3)$, where \. $B_1=\{2,3,7\}$,
$B_2=\{4,6\}$ \. and \. $B_3 = \{1,5\}$. Then we have \. $\al = (1,1,1,2,2,3,3)$ \. and
the order preserving function \ts $g$ \ts is given as in the figure.  Then,
standardize the values \.
$$\aligned
& \hskip2.cm\bigl(f(x_1),f(x_3),f(x_5)\bigr) = (2,1,3) \  \longrightarrow \ (2,1,3)\.,\\
& \bigl(f(x_2),f(x_7)\bigr) = (4,5) \   \longrightarrow \ (1,2)\., \qquad
\bigl(f(x_4),f(x_6)\bigr) = (6,7) \  \longrightarrow \ (1,2).
\endaligned
$$
Permute the elements within \. $B_1,B_2,B_3$ \. accordingly to get \. $\gamma_1 = (3,2,7)$, \. $\gamma_2=(4,6)$ \.
 and \. $\gamma_3 = (1,5)$.  Concatenating these, we obtain \. $\sigma = (3,2,7,4,6,1,5)\in S_7$.

\smallskip

In the opposite direction, let \. $g'\in \BOm(P,3)$ \. be as in Figure~\ref{f:injection}, and let \.
$\si=(7,1,3,2,5,4,6)$.
Then \. $\tau=(1,1,2,2,3,3,3)$, so \. $c_1=c_2=2$ \. and \. $c_3=3$.
This gives \. $C_1=\{1,7\}$, \. $C_2=\{2,3\}$ \. and
\. $C_3=\{4,5,6\}$.  The corresponding reduced permutations are then \. $(2,1)$, \. $(2,1)$ \.
and \. $(2,1,3)$, respectively, giving a map \. $h: X\to [t]$.  Finally, note that \. $h\notin \Ec(P)$ \.
in this case.
\end{ex}

\begin{figure}[hbt]
\begin{center}
\includegraphics[width=16.5cm]{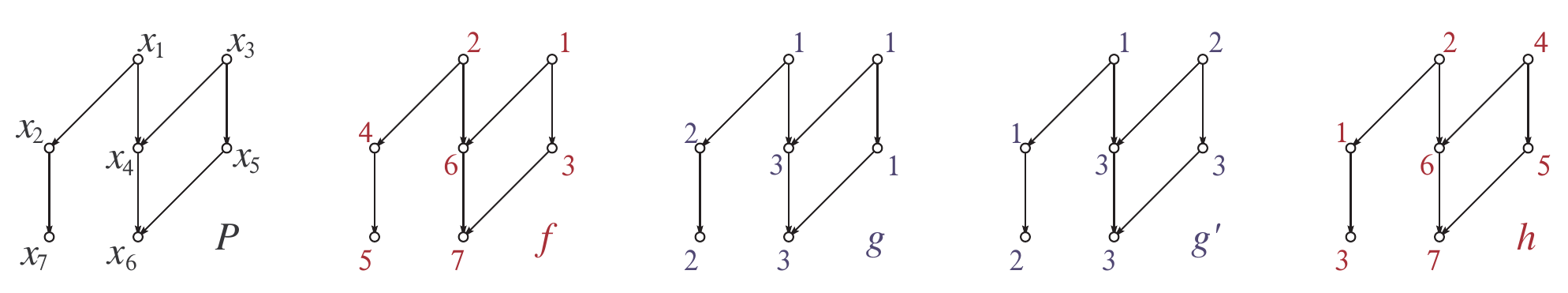}
\end{center}
\vskip-.3cm
\caption{An example of the injection $\Psi$ and the inverse map $\Psi^{-1}$. }\label{f:injection}
\vskip.3cm
\end{figure}

\smallskip

\begin{proof}[{Proof of Theorem~\ref{t:OP-SP}}]
In the notation of the proof above, let \. $(g,\si) \in \BOm(P,t) \times S_n$ \.
and let \. $(h,\pi) =  \Psi^{-1}(g,\si)$.   By construction, we have \.
$(h,\pi) \in \Ec(P) \times [t]^n$ \.  if and only if \. $h \in \Ec(P)$.
Thus, the function  \. $\ze(P,t)$ \. is equal to the number of  \.
$(g,\si) \in \BOm(P,t) \times S_n$ \. such that \ts $h \notin \Ec(P)$.
Since \ts $\Psi^{-1}$ \ts can be computed in polynomial time, this
implies the result.
\end{proof}

\smallskip

\begin{ex}\label{e:HP-power}
Let \. $P= A_{n}$ \. be an antichain on \ts $n$ \ts elements.
Then we have \. $e(A_n)=n!$ \. and \. $\Omega(A_n,t)=t^n$.
In this case both~\eqref{eq:OP-HP} and~\eqref{eq:OP-gen} are equalities.
Similarly, let \ts $P=C_n$ \ts be a chain of $n$ elements. Then we have \.
$e(C_n)=1$ \. and \. $\Omega(P,t) = \binom{t+n-1}{n}$.  In this case,
the lower bound~\eqref{eq:OP-HP} is slightly better than~\eqref{eq:OP-gen}.
\end{ex}

\smallskip

In a different direction, here are equality conditions for~\eqref{eq:OP-gen} in
Theorem~\ref{thm:order_le}.

\smallskip

\begin{cor}\label{c:OP2-equality}
Let \ts $P=(X,\prec)$ \ts be a poset on \ts $|X|=n$ \ts elements.  Then
$$
\Omega(P,t) \, = \, e(P) \. \frac{t^n}{n!}
$$
for some \. $t \in\nn_{\ge 1}$ \. \underline{if and only if} \. $P=A_n$ \ts
is an \ts $n$-antichain.
\end{cor}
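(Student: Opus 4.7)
\smallskip

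\noindent\textbf{Proof plan.} The ``if'' direction is immediate: for $P = A_n$ we have $e(A_n) = n!$ and $\Omega(A_n,t) = t^n$ (as noted in Example~\ref{e:HP-power}), so the claimed equality holds for every $t \in \nn_{\ge 1}$. The content is the ``only if'' direction, and my plan is to extract it directly from the injective proof of Theorem~\ref{thm:order_le} without any new machinery.

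Recall that Theorem~\ref{thm:order_le} is proven by constructing an injection
\[
\Psi: \ \Ec(P) \. \times \. [t]^n \, \longrightarrow \, \BOm(P,t) \. \times \. S_n\ts,
\]
and the identity $\Omega(P,t) \cdot n! = e(P) \cdot t^n$ is equivalent to the statement that $\Psi$ is a bijection. Equivalently, by the formula for $\Psi^{-1}$ given in that proof, equality holds iff for \emph{every} pair $(g,\si) \in \BOm(P,t) \times S_n$ the associated map $h: X \to [n]$ produced by $\Psi^{-1}$ lies in $\Ec(P)$. So my task is to exhibit, whenever $P \neq A_n$, a single pair $(g,\si)$ for which this~$h$ fails to be a linear extension.

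The key step is to probe with the constant function $g \equiv 1 \in \BOm(P,t)$ (valid for any $t \ge 1$). In the notation of the construction of $\Psi^{-1}$, this gives $c_1 = n$ and $c_i = 0$ for $i > 1$, so $\tau = (1,\ldots,1)$, the unique nonempty block is $C_1 = \{\si_1,\ldots,\si_n\}$, and since $g^{-1}(1) = \{x_1,\ldots,x_n\}$ is already listed in natural order, the rearrangement by standardization yields simply $h(x_k) = \si_k$. Now if $P$ is not an antichain, then $\Ec(P) \subsetneq S_n$, so one may choose $\si \in S_n \setminus \Ec(P)$; the resulting $h$ violates the poset order, hence $(g \equiv 1,\si)$ is not in the image of $\Psi$ and equality fails. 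Conversely, demanding that $h \in \Ec(P)$ for every $\si \in S_n$ forces $\Ec(P) = S_n$, i.e.\ $P = A_n$.

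I expect no real obstacle here: the whole argument rides on the explicit formula for $\Psi^{-1}$, and the only thing to verify carefully is that on the constant probe $g \equiv 1$ the inverse map collapses to the identification $h(x_k) = \si_k$, which is a direct unwinding of the definitions. The corresponding ordered set partition $\pi$ has $C_1 = \{\si_1,\ldots,\si_n\}$ and all other blocks empty, which is legitimately an element of $[t]^n$, so nothing goes wrong on that side.
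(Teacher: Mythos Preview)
Your proof is correct and follows essentially the same approach as the paper: both reduce equality to $\Psi$ being a bijection, then exhibit a pair $(g,\sigma)$ for which the $h$ produced by $\Psi^{-1}$ fails to be a linear extension. Your probe $g\equiv 1$ (yielding $h(x_k)=\sigma_k$ and hence reducing to $\Ec(P)=S_n$) is in fact a bit cleaner than the paper's choice of a specific cover pair $x_i\prec x_{i+1}$ together with the transposition $\sigma=(i,\,i{+}1)$, but the idea is identical.
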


\begin{proof}
The ``if'' part is clear.
\ts
For the ``only if'' part, suppose that \ts $P \neq A_n$.
Then there are \ts $x_i, x_{i+1}\in X$ \ts such that \ts $x_i\prec x_{i+1}$,
and \ts $x_{i+1}$ covers~$x_i$.  Without loss of generality,
we can assume that \ts $x_i$ \ts is a minimal element.

It follows from the proof of Theorem~\ref{thm:order_le},
that equality in~\eqref{eq:OP-gen} holds \ts if and only if \ts
$\Psi$ is a bijection. In particular,  for all  \.$(g,\si) \in \BOm(P,t) \times S_n$ \.
the map \ts $h$ \ts in \. $\Psi^{-1}(g, \si) = (h,\pi)$ \. must be a linear extension.
Now take an order preserving map~$g$, such that \. $g(x_1)=\ldots=g(x_{i+1})=1$,
and \ts $\sigma = (i,i+1)$. Then we have \ts $h(x_i)=i+1$ \ts and \ts $h(x_{i+1})=i$,
so that \ts $h\notin \Ec(P)$ \ts is not a linear extension. Thus, map~$\Psi$ is not a
bijection in this case.
This completes the proof.
\end{proof}

\smallskip

\begin{ex}\label{e:OP-Stanley}
Let \. $P_n = C_1 \oplus A_{n-1}$ \. be an ordered tree poset consisting of one minimal element
and $(n-1)$ maximal elements.  Then \ts $e(P)=(n-1)!$ \ts and the bound~\eqref{eq:HP} is an equality.
Observe that
$$
\Omega(P_n,t) \ = \ 1^{n-1}\. + \. 2^{n-1}\. + \. \ldots \. + \.  t^{n-1} \ = \ \frac{t^n}{n} +  \frac{t^{n-1}}{2} +   O(t^{n-1}).
$$
The general inequality~\eqref{eq:OP-gen} gives \. $\Omega(P_n,t) \ts \ge \ \frac{t^n}{n}$\ts,
while~\eqref{eq:OP-HP} gives a stronger bound:
$$
\Omega(P_n,t) \ \ge \ \frac{1}{n} \. \bigl(t^n + \.t^{n-1}\bigr).
$$
Asymptotically, both lower bounds are not tight in the second order term.
Compare this to~\eqref{eq:OP-F-ineq} which conjecturally gives a sharp bound.
\end{ex}

\smallskip

\begin{rem}\label{r:OP-injection}
Neither of the Theorems~\ref{thm:HP order poly} and~\ref{thm:order_le} imply each other.
Note that the leading coefficient of \ts $\Omega(P,t)$ \ts is \ts $e(P)/n!$,
and the Bj\"orner--Wachs inequality~\eqref{eq:HP} is an equality only for
ordered forests (Proposition~\ref{p:HP-forest}). Thus, for large values of~$t$,
the lower bound in Theorem~\ref{thm:order_le} asymptotically better.

On the other hand, the lower bound in Theorem~\ref{thm:order_le} cannot
be improved to \. $t^r\ts (t+1)^{n-r}\ts e(P)/n!$ \. as Theorem~\ref{thm:HP order poly}
might suggest.  Indeed, for the poset $P_n$ as in the example above, we have:
$$\Omega(P_n,2) \ = \ 1^{n-1}+2^{n-1} \ < \ \frac{2\cdot 3^{n-1}}{n} \quad \ \text{for \ $n\geq 3$.}
$$

Finally, let us mention that the order polynomial \ts $\Omega(P,t)$ \ts can have
negative coefficients, implying that~\eqref{eq:OP-gen} does not follow directly
from the leading term of \. $t^n e(P)/n!$ \.  For example, recall that
$$
\Omega(P_5,t) \ = \
1^{4}\. + \. 2^{4} \. + \. \ldots \. +  \. t^{4} \ = \ \frac{1}{30}\.
\bigl(6 \ts t^5 \ts + \ts 15 \ts t^4 \ts + \ts 10 \ts t^3 \ts - \ts t\bigr),
$$
and note the negative coefficient in~$t$.
\end{rem}

\medskip

\section{Restricted linear extensions} \label{s:restricted}

In this section we use an algebraic approach to obtain vanishing and uniqueness
conditions for the generalized Stanley inequality.  We also present a direct
combinatorial argument for the uniqueness conditions.

\subsection{Background}\label{ss:restricted-background}
%
%
Before we proceed to generalizations, let us recall some definition
and results about group action on the set~$\Ec(P)$ of linear extensions.
In our presentation we follow Stanley's survey~\cite{Sta-promo}.

Let \ts $X=(X,\prec)$ \ts be a poset on \ts $|X|=n$ \ts elements.
\defn{Promotion} \. $\partial: \Ec(P) \to \Ec(P)$  \. is a bijection
on linear extensions defined as follows.  For \ts $f\in \Ec(P)$,
let \. $t_1 \prec \ldots \prec t_r$ \.
be a maximal chain in~$P$ such that $f(t_1),f(t_2),\ldots,f(t_r)$ is
lexicographically smallest. Define \. $f\partial\in \Ec(P)$ \. as
$$f\partial(x) =
\begin{cases} \ f(t_{i+1})-1 & \ \ \text{ if \ }x=t_i\text{ \, for some } \, i<r\ts, \\
\ n &  \ \ \text{ if \ $x=t_r$}\ts,\\
\ f(x)-1\ts &  \ \ \ \text{otherwise} .
\end{cases}
$$
We think of~$\partial$ as an operator applied on the right, and write \. $\partial: \ts f \ts\mapsto\ts f\ts \partial$.

\defn{Evacuation} \. $\ve: \Ec(P) \to \Ec(P)$  \. is a another operator
on linear extensions defined as follows.   Denote by \ts $\partial_i$ \ts
as the promotion on a poset obtained by restriction to elements with $f$-values \.
$1,\ldots,i$, so that \ts $\partial_n=\partial$ \ts and \ts $\partial_1=1$. Then \ts $\ve$ \ts is defined
as the composition \. $\ve := \ts\partial_n \ts \circ \ts\dots\ts\circ\ts\partial_1$\ts,
and we write \. $f\ve = f\ts\partial_n \ts \cdots\ts\partial_1$\ts.

\smallskip

The promotion and evacuation maps can be interpreted using group actions on linear extensions as follows.

\smallskip

Let \ts $\RG_n=\<\tau_1,\ldots,\tau_{n-1}\>$ \ts be an infinite Coxeter group with the relations
\begin{equation}\label{eq:Coxeter}
\tau_1^2\. = \. \ldots \. = \tau_{n-1}^2 \. = \. 1 \qquad \text{and} \qquad \tau_i\ts\tau_j\. = \. \tau_j\ts \tau_i
\ \ \text{ for all } \ \, |i-j|\. > \.1\ts.
\end{equation}
Note that the symmetric group \ts $S_n$ \ts is a quotient of~$\ts\RG_n$.
We also define elements \. $\de_2,\ldots,\de_{n}=\de\in \RG_n$ \. as follows:
$$\delta_k \. := \. \tau_1 \ts \tau_2 \.\cdots \.\tau_{k-1} \ \ \. \text{for} \ \ 1< k \le n\.,
\quad \text{and} \quad \gamma \. := \. \de_n \ts \de_{n-1} \ts \cdots \ts \de_2 \ts.
$$
Note that \. $\RG_n=\<\de_2,\ldots,\de_n\>$, and that \ts $\ga$ \ts is an involution: \ts $\ga^2=1$, see e.g.\
\cite[Lemma~2.2]{Sta-promo}.

With every linear extension \ts $f\in \Ec(P)$ \ts we associate a  word \. $\xx_f \ts=\ts x_1\ldots x_n\ts \in X^\ast$,
such that \. $f(x_i) =i$ \. for all \. $1\le i \le n$.  In the notation of the previous section, this says
that \. $X=\{x_1,\ldots,x_n\}$ \. is a natural labeling corresponding to~$f$.
%

We can now define the action of \ts $\RG_n$ \ts on \ts $\Ec(P)$ \ts as the right
action on the words \ts $\xx_f$, \ts $f\in \Ec(P)$.  For \. $\xx_f \ts =\ts x_1\ldots \ts x_n$ \. as above, let
\begin{equation}\label{eq:tau-def}
(x_1\ldots \ts x_n) \. \tau_i \ := \ \begin{cases} \ x_1 \ldots \ts x_n, & \ \text{if \, $x_i \prec x_{i+1}$}\ts,\\
\ x_1\dots x_{i+1} \ts x_i \dots x_n\ts, & \ \text{if \, $x_i \parallel x_{i+1}$}\ts.
\end{cases}
\end{equation}

Observe that if \. $1= i_1 < i_2 <\dots <i_r\le n$ \. are the indices of the lexicographically smallest maximal chain in the linear extension~$f$, then
$$
(\xx_f) \. \delta \, = \, (x_1\ldots \ts x_n) \. \delta \, = \, x_2 \ldots \ts x_{i_2-1} \ts x_{i_1}
\ldots \ts x_{i_r-1} \ts x_{i_{r-1}} \ldots \ts x_{i_r} \, = \, \xx_{f\partial}\.,
$$
where \ts $\de=\de_n$ \ts as above and $\xx_f \delta = \xx_{f\partial}$ is the promotion operator.

\smallskip

\begin{prop}[{\rm see e.g.~\cite[Prop.~4.1]{AKS}}{}]
\label{p:LE-transitive}
Let \. $P=(X,\prec)$ \. be a poset with \. $|X|=n$ \. elements.
Then group \. {\rm $\RG_n$} \ts acts transitively on \ts $\Ec(P)$.
\end{prop}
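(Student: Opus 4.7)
The plan is to prove transitivity by a bubble-sort argument: I would fix an arbitrary reference linear extension $f_0\in \Ec(P)$ and show that every $f\in \Ec(P)$ can be transformed into $f_0$ by a sequence of generators $\tau_i$. Since $\tau_i^2=1$, this immediately yields transitivity: given $f,f'\in \Ec(P)$, write $f=f_0\cdot w$ and $f'=f_0\cdot w'$ with $w,w'\in \RG_n$, and then $f\cdot (w^{-1}w') = f'$. The proof would proceed by induction on $n=|X|$, the base case $n=1$ being trivial.

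For the inductive step, fix once and for all a maximal element $x\in X$ of $P$ (the choice is arbitrary but made before considering any particular linear extension). The key lemma I would establish is that any $f\in \Ec(P)$ can be brought, via $\tau_i$'s, to a linear extension $f'$ with $f'(x)=n$. Indeed, suppose $f(x)=i<n$ and let $y:=f^{-1}(i+1)$ be the element in position $i+1$ of the word $\xx_f$. Since $x$ is maximal in $P$, we cannot have $x\prec y$; and if $y\prec x$ held, then $f(y)<f(x)=i$ would contradict $f(y)=i+1$. Therefore $x\parallel y$, so by the definition \eqref{eq:tau-def}, applying $\tau_i$ to $\xx_f$ swaps $x$ and $y$, producing a linear extension in which $x$ occupies position $i+1$. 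Iterating this step at most $n-i$ times produces the desired~$f'$.

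Once $x$ is pinned at position $n$, the prefix $x_1\ldots x_{n-1}$ of $\xx_{f'}$ is a linear extension of the subposet $P|_{X\sm\{x\}}$, which has $n-1$ elements. The subgroup $\langle \tau_1,\ldots,\tau_{n-2}\rangle \subset \RG_n$ acts on the first $n-1$ entries of $\xx_{f'}$ and fixes the last entry; under the identification with $\RG_{n-1}$, the induction hypothesis yields that this subgroup acts transitively on $\Ec(P|_{X\sm\{x\}})$. Combining the two steps, for any $f\in \Ec(P)$ we can first bubble~$x$ to the last position and then use the inductive transitivity to match the prefix of our chosen $f_0$ (after first bubbling its copy of $x$ to the end as well). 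This proves transitivity.

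There is no serious obstacle: the only point requiring care is verifying that the swap in \eqref{eq:tau-def} indeed yields a linear extension when $x_i\parallel x_{i+1}$, which is immediate since no relation between these two entries is disturbed and all other pairwise orders are preserved. The bubbling-up step crucially uses only maximality of $x$ (so that $x\prec y$ never holds), which is exactly what makes the induction go through.
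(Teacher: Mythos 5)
Your argument is correct. The bubbling step is sound: for a maximal element $x$ with $f(x)=i<n$ and $y=f^{-1}(i+1)$, maximality rules out $x\prec y$, and $y\prec x$ would force $f(y)<f(x)$, so indeed $x\parallel y$ and $\tau_i$ moves $x$ up one position; and the subgroup $\langle\tau_1,\ldots,\tau_{n-2}\rangle$ does act on the prefix exactly as $\RG_{n-1}$ acts on $\Ec(P\sm x)$, so the induction closes. (The only point you leave implicit is that the assignment \eqref{eq:tau-def} respects the defining relations of $\RG_n$, so that one genuinely has a group action and can invert words as you do; since the $\tau_i$ are visibly involutions and $\tau_i,\tau_j$ touch disjoint positions when $|i-j|>1$, this is immediate and not a gap.) Note that the paper itself gives no proof of Proposition~\ref{p:LE-transitive}: it is quoted as a folklore result from the literature. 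The closest argument in the paper is the proof of its generalization, Theorem~\ref{t:LE-gen-transitive}, which proceeds differently: one fixes a reference extension $f$, measures any other extension by the number of inversions of the induced permutation, and shows that suitable generators strictly decrease this count until the reference word is reached. Your induction on $|X|$ via pushing a fixed maximal element to the last position is simpler and perfectly adequate for the unrestricted statement, but it does not adapt well to the restricted setting of Theorem~\ref{t:LE-gen-transitive}, where prescribed values $f(u_i)=a_i$ pin certain positions and block the bubbling; the inversion-decreasing scheme (with the enlarged generating set $\tau_{i\ts j}$) is what survives that constraint, which is why the paper is organized around it.
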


\smallskip

\begin{rem}
The proposition is a folklore result repeatedly rediscovered in
different contexts.  For the early proofs and connections to Markov chains,
see~\cite{KK,Mat}.  For a brief overview of generalizations and further
references, we refer to the discussion which follows Prop.~1.2
in~\cite{DK1}.
\end{rem}

\smallskip

\subsection{Generalization to restricted posets}\label{ss:restricted-gen}
Let \ts $P=(X,\prec)$.  Fix a sequence of $k$ elements \. $\uu=(u_1,\ldots,u_k) \in X$ \.
and a sequence of $k$ distinct integers \. $\aa = (a_1,\ldots,a_k)$, such that \.
$1\leq a_1 < \ldots < a_k \leq n$.  A \defn{restricted linear extension}
with respect to \ts $(\uu,\aa)$ \ts is a linear extension \ts $f\in \Ec(P)$ \ts
 such that \. $f(u_i)=a_i$ \. for all \. $1\le i\le k$. As in the introduction, we
 denote this set by \. $\Ec(P,\ts\uu,\ts\aa)$.

Our first goal is to modify and generalize Proposition~\ref{p:LE-transitive} for
restricted linear extensions.
For simplicity assume that \. $a_i +1 < a_{i+1}$ \. for all \. $1\le i< k$.
Otherwise, we can identify elements \ts $u_i\sim u_{i+1}$ \ts and consider
the equivalent problem for the so obtained smaller poset. We also assume $a_1>1$ and $a_k<n$, since otherwise the corresponding element $u_1$ or $u_n$ should be minimal/maximal and can be removed from the poset, again reducing the problem.
We also assume that $u_i \prec u_{i+1}$ in the poset.

\smallskip

Let \. $A:=\{a_1,\ldots, a_k\}$ \. and \. $A'=\{a_1-1,\ldots, a_k-1\}$, so \. $A\cap A'=\emp$ \.
by the assumption.
Let
$$
\RH_n(\aa) \, = \, \<\ts\tau_i, \. \si_r \ : \ 1\le i < n, \. i\notin A \cup A', \. 1\le r\le k\ts\>
$$
be an infinite group
with relations as in~\eqref{eq:Coxeter} and \. $\si_r^2=1$ \. for all \. $1\le r \le k$.
This is a free product of several infinite Coxeter groups which acts on \ts $\Ec(P, \ts \uu,\ts \aa)$ \ts
as follows.

First, for all \. $i\notin A \cup A'$, \. $1\le i < n$, the action of \ts $\tau_i$ \ts
defined in~\eqref{eq:tau-def} can be restricted  to act on \ts $\Ec(P, \ts \uu,\ts\aa)$.
Next, for all \. $1\le r \le k$ \. and \. $j=a_i$ \. define the action
of~$\ts \si_i$ \ts on \ts $\Ec(P,\ts \uu,\aa)$:
$$
(x_1\ldots \ts x_n) \. \sigma_i \ := \ \begin{cases}  \ x_1 \ldots \ts x_{j+1} \ts x_j \ts x_{j-1} \ldots \ts x_n &
\ \text{if } \ \ x_{j-1} \parallel  x_j\., \ \. x_{j-1} \parallel x_{j+1} \ \ \text{and} \ \, x_{j}  \parallel  x_{j+1}\,,\\ \
x_1\ldots \ts x_{j-1} \ts x_j \ts x_{j+1} \ldots \ts x_n & \text{otherwise}\ts.
\end{cases}
$$
Here we continue using our convention of association of \ts $\xx_f$ \ts with \. $f \in \Ec(P,\ts \uu,\aa)$.

Note that when \. $x_{j-1}  \parallel  x_j$ \. and \. $x_{j}  \parallel  x_{j+1}$ \. we have
 $$
 \xx \. \sigma_i \, = \xx \. \tau_j \ts \tau_{j-1} \ts \tau_j \, = \, \xx \. \tau_{j-1}\ts \tau_j\ts \tau_{j-1}\ts.
 $$
However, when \. $x_{j-1} \prec x_{j}$ \. and \. $x_j  \parallel  x_{j+1}$ \. we have \. $\xx \. \sigma_i \. = \. \xx$, but
$$\xx \. \tau_{j-1} \ts\tau_j\ts \tau_{j-1} \, \neq  \, \xx,
$$
since \. $x_j$ \ts has been moved to $(j+1)$-st position. The same property holds when
 \. $x_{j-1} \parallel  x_{j}$ \. and \. $x_j \prec x_{j+1}$\ts.

\smallskip

\begin{ex}\label{e:group-H-not}
Let us note that the action of \ts $\RH_n(\aa)$ \ts on \ts $\Ec(P, \ts\uu,\ts\aa)$ \ts
is not necessarily transitive. For example, let \ts $P=\bigl(X,\prec)$, where
\. $X=\{x,y,u_1,z,u_2\}$, be a poset isomorphic to \. $C_3 + C_1 + C_1$ \. with \.
$u_1 \prec z \prec u_2$ \. and $x,y$ incomparable to \. $\{u_1,z,u_2\}$.
Now, when \. $a_1=2$ \. and \. $a_2=4$, the action of group $\RH_5(\aa)$ \. has two orbits:
\. $\{xu_1zu_2y\}$ \. and \. $\{yu_1zu_2x\}$.  This shows that to generalize
Proposition~\ref{p:LE-transitive} we need to enlarge group~$\RH_5(\aa)$.
\end{ex}

\smallskip

Let \. $\GG_n=\bigl\<\tau_{i\ts j} \.:\. 1\leq i<j\leq n\bigr\>$ \. be an infinite group
with relations
\begin{equation}\label{eq:tau-G-def}
\aligned
 \tau_{i\ts j}^2 \. = \. 1  & \qquad \text{for all } \ \ 1\le i<j\le n\ts, \\
 \tau_{i\ts j} \. \tau_{k\ts \ell} \. = \. \tau_{k\ts  \ell} \.\tau_{i\ts j} & \qquad \text{for all } \ \  i<k<\ell<j\ \text{ or }\  i <j<k<\ell\ts.
\endaligned
\end{equation}
Define the action of \. $\GG_n$ \ts on \ts $\Ec(P)$ \ts as
$$(x_1\ldots \ts x_n) \. \tau_{i\ts j} \ := \ \begin{cases}
\ x_1\ldots \ts x_j \ldots \ts x_i \ldots x_n & \
\text{if \. $x_i \parallel y$ \. and \. $x_j \parallel y$ \. for all \. $y \in\{x_{i+1},\ldots,x_{j-1}\}$}, \\
\ x_1 \ldots \ts x_n & \ \text{otherwise}.
\end{cases}
$$
In the notation above, we have \. $\tau_{i \. i+1} = \tau_i$\., so \. $\RG_n \subset \GG_n$ \. is a subgroup.
For brevity, we write \ts $\tau_i$ \ts for \ts $\tau_{i \. i+1}$ \ts from this point on.

Finally, let \. $\GG_n(\aa)$ \. be a subgroup of \ts $\GG$ \ts defined as follows:
$$\GG_n(\aa) \, := \, \bigl\<\ts \tau_{i\ts j} \, : \, i,j\notin A, \,  1\le i<j\le n \ts\bigr\>\ts.
$$

\smallskip

\begin{thm}\label{t:LE-gen-transitive}
Let \. $P=(X,\prec)$ \. be a poset  with \. $|X|=n$ \. elements.
Fix a chain of $k$ elements \. $\uu=(u_1,\ldots,u_k) \in X$ \.
and an increasing sequence of $k$ distinct integers \. $\aa = (a_1,\ldots,a_k)$.
Then group \. {\rm $\GG_n(\aa)$} \ts defined above
acts transitively on \ts $\Ec(P, \ts \uu,\ts\aa)$.
\end{thm}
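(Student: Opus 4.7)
The strategy is a bubble-sort argument that adapts Proposition~\ref{p:LE-transitive} to the restricted setting. Given $f, g \in \Ec(P, \uu, \aa)$, the plan is to build an element of $\GG_n(\aa)$ sending $f$ to $g$ by correcting positions in $[n] \setminus A$ from left to right. Let $p$ be the smallest position in $[n] \setminus A$ where $f^{-1}(p) \ne g^{-1}(p)$ (the two extensions already agree on every position in $A$), set $y := g^{-1}(p)$ and $q := f(y)$, so that $q > p$ and $p, q \notin A$. It suffices to produce a sequence of $\GG_n(\aa)$-moves transforming $f$ into some $f'$ with $(f')^{-1}(p) = y$, since then $f'$ and $g$ agree through position $p$ and the outer induction continues on a strictly smaller disagreement set.

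The first ingredient is an incomparability lemma: $y \parallel z$ in $P$ for every $z$ sitting at a position in the open interval $(p, q)$ under $f$. A relation $y \prec z$ would force $f(y) < f(z)$, contradicting $q = f(y) > f(z)$; and $z \prec y$ would force $g(z) < g(y) = p$, contradicting the fact that $z$ sits at a position $\geq p + 1$ in $f$ while $f, g$ agree on all positions $< p$ in $[n] \setminus A$ (and on every position in $A$). In particular, $y \parallel u_s$ for every $s$ with $a_s \in (p, q)$.

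The inner step moves $y$ one position leftward by sub-induction on $q - p$. When $q - 1 \notin A$, the adjacent swap $\tau_{q-1, q}$ is a legal generator of $\GG_n(\aa)$ by the incomparability lemma, and it reduces $q - p$ by one. When $q - 1 = a_s \in A$, adjacent swaps are forbidden, and the plan is to apply the long jump $\tau_{p', q}$ for some $p' \in [p, q - 2] \setminus A$ chosen so that $f^{-1}(p')$ is incomparable to every element occupying a position in $(p', q)$. The $y$-side of the jump condition is automatic by the incomparability lemma, so once $p'$ exists the move deposits $y$ at position $p' < q$, strictly reducing the sub-induction parameter.

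The main obstacle of the proof is confirming that a valid jump position $p'$ always exists when $q - 1 = a_s \in A$. The plan is to search downward through $p' = q - 2, q - 3, \ldots, p$: blockage at $p'$ would mean $f^{-1}(p')$ has a $\prec$-successor among $f^{-1}(p' + 1), \ldots, f^{-1}(q - 1)$, and iterated blockage would force all of positions $p, p + 1, \ldots, q - 2$ in $f$ to be occupied by predecessors of $u_s$ in $P$. But $g$ places $y$, which is $\parallel u_s$, at position $p < a_s$, so $g$ uses strictly fewer than $a_s - 1$ predecessors of $u_s$ to fill positions below $a_s$; since the set $\{z \in X : z \prec u_s\}$ is intrinsic to $P$ and must have the same cardinality in $f$ and in $g$, the two counts cannot match, yielding a contradiction. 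The hard part will be making this counting precise, in particular tracking which elements sit in positions $[p, q-2]$ of $f$ versus $[p+1, a_s - 1]$ of $g$ and verifying that the comparison uses only properties of $f, g$ available prior to the vanishing conditions (Theorem~\ref{t:vanish}). Once $p'$ is secured, the inner sub-induction terminates in at most $q - p$ moves, the outer iteration in at most $n - k$ rounds, and concatenation of all the moves yields the required element of $\GG_n(\aa)$.
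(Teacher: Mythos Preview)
Your approach is correct and genuinely different from the paper's. Both are sorting arguments, but the mechanics diverge. The paper fixes one linear extension $f$, and for any other $g$ measures $\inv(g)$ relative to~$f$; it then reduces inversions by locating the last descent of the permutation $w=(f(y_1),\ldots,f(y_n))$. In the hardest case---when every descent involves a pinned element $u_t$ and no two are adjacent---the paper constructs a longest increasing subsequence $i_1>i_2>\cdots>i_r>m$ ending at a carefully chosen index and applies a \emph{cycle} of generators $\tau_{m\, i_r}\tau_{i_r\, i_{r-1}}\cdots\tau_{i_2\, i_1}$ to slide several elements simultaneously. Your argument instead corrects positions left to right: you move a single target element $y=g^{-1}(p)$ to its destination via individual swaps, using one long jump $\tau_{p',q}$ at a time to hop past each pinned position. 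The existence of a valid landing spot~$p'$ is secured by your counting argument, which can be made precise as follows: if every $p'\in[p,q-2]\setminus A$ were blocked then (using that the $u_i$ form a chain, so positions in $A\cap[p,q-2]$ are automatically below $u_s$) all of $f^{-1}(p),\ldots,f^{-1}(a_s-1)$ lie in $D_s=\{z:z\prec u_s\}$; since $f,g$ agree on positions $<p$, this forces $|D_s|=|D_s\cap f^{-1}[1,p-1]|+(a_s-p)$, whereas in $g$ the element $y\notin D_s$ occupies position $p<a_s$, leaving only $a_s-1-p$ slots in $(p,a_s-1]$ for the remaining $a_s-p$ elements of $D_s\setminus f^{-1}[1,p-1]$, a contradiction. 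Your route is arguably more elementary (single swaps, pigeonhole), while the paper's cycle move handles several inversions at once and ties in more directly with the promotion/demotion machinery used elsewhere in that section.
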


\begin{proof}
Suppose \. $\Ec(P,\ts \uu,\ts \aa) \neq \emp$. Fix \. $f \in \Ec(P,\ts \uu,\ts\aa)$ \. and write \.
$\xx_0=x_1\ldots x_n$ \. corresponding to the natural labeling \. $f(x_i)=i$.  For every \.
\. $\yy = y_1\ldots y_n$ \. corresponding to a linear extension \. $g \in \Ec(P,\ts \uu,\ts \aa)$,
let \ts $\inv(\yy)$ \ts be the number of inversions in the permutation~$f(\yy):=\bigl(f(y_1),\ldots,f(y_n)\bigr)$.  We claim that
unless \. $\yy=\xx_0$, we can use operators in \. $\GG_n(\aa)$ \. to decrease \ts $\inv(\yy)$.
Using this recursively, we can then reach \ts $\xx_0$ \ts as the unique element for which~$\ts \inv(\xx_0) =0$.

Consider the permutation \. $w:=\bigl(f(y_1),\ldots,f(y_n)\bigr)$.  If \. $w\ne \mathbf{1}$,
there exist elements \. $f(y_i), f(y_{i+1}) \in w$ \. such that \. $f(y_i)>f(y_{i+1})$.
Then we have \. $y_i  \parallel  y_{i+1}$.  We call such \. $(y_i, y_{i+1})$ \. a \defn{descending pair}.
Suppose there is a descending pair with \. $y_i, y_{i+1} \not \in \uu$.
Then \. $\tau_i \in \RH_n(\aa)$, and \. $\yy\. \tau_i \ts = \ts \ldots y_{i+1}\ts y_i\ldots$ \ts
Therefore, we have \. $\inv(\yy\ts\tau_i)\ts =\ts \inv(\yy) -1$, which proves the claim in this case.

In the remaining cases, every descending pair involves at least one element from~$\uu$,
which are fixed points of the labeling~$f$.  Suppose there are two adjacent descending pairs, i.e.\
\. $f(y_{i-1})>f(y_i)>f(y_{i-1})$ \. and \. $y_i \in \uu$. Then we have \.
$\inv(\yy\tau_{i-1 \. i+1})=\inv(\yy)-3$, which prove the claim in this case.

Finally, suppose every descending pair involves at least one element from~$\uu$, and none are adjacent.
Let  \ts $i_1-1$ \ts be the last descent of $w$. Then \ts $i_1-1\in \aa$, i.e. $i_1-1=a_t$ \. for some $a_t\in \aa$,
and we have \.
$a_t=w_{a_t} >w_{i_1}$.  To see this, suppose the contrary that the last descent in $w$ is at $i_1 -1 =a_t-1$, so $w_{i_1-1} > w_{a_t}=a_t$ and all elements of $w$ after $a_t$ are increasing. Since $a_t$ is a fixed point and we have $n-a_t$ positions after $a_t$ filled with numbers larger than $a_t$, i.e. from the interval $\{a_t+1,\ldots,n\}$, we must have $w_i=i$ for $i=a_t,\ldots,n$, so all values $>a_t$ appear after it. Thus $w_{a_t-1}<a_t =w_{a_t}$, reaching a contradiction, and so the last descent is at $a_t$.


Let \. $m <i_1-1$ \. be the largest value for which \. $w_{m}>i_1-1=a_t$. Such value exists
since by the reasoning above at least one of \. $\{i_1,\ldots,n\}$ \. appears before~$i_1-1$.  Now,
form a sequence \. $i_1\ts>\ts i_2\ts>\ts\ldots\ts>\ts i_r\ts >\ts i_0= m$\ts, such that \ts
$i_j$ \ts is the largest index smaller than \ts $i_{j-1}$ \ts  such that \. $w_{i_j}<w_{i_{j-1}}$.
Note that by similar interval arguments we must have that \. $f(y_{i_j})<i_j$ \.
and they do not hit any of the elements in~$\aa$.  Note that these indices give a maximal
increasing subsequence in~$w_{m+1}\ldots w_n$ which ends at~$w_{i_1}$.

Now apply \. $\tau_{m\. i_r}\. \tau_{i_r \. i_{r-1}} \. \cdots \. \tau_{i_2\. i_1}$, which is nontrivial as it transposes the element $y_m$, incomparable to all elements in positions $\in [m+1,i_1]$, with the elements $y_{i_r},\ldots$ which are also incomparable with the elements in the corresponding interval.
Note that this is the cycle permutation \. $(m,i_1,i_2,\ldots)$ \. so that $y_{m}$ moves to position $i_1$,
and the other elements slide down. This give a linear extension where the elements sliding to left bypass
only elements of larger value of~$f$, and hence respect the partial order.  Since \ts $f(y_{m})$ \. is
larger than the elements it jumps over, the resulting permutation has fewer inversions.  This proves the claim
in that case and completes the proof of the theorem.
\end{proof}

\smallskip

\subsection{Vanishing conditions}\label{ss:restricted-vanish}
For \. $\aa=(a_1,\ldots,a_k)$, let \. $\aa^{\<i\>} := (a_1,\ldots,a_i+1,\ldots,a_k)$.
By definition, the operator
$$
\tau_{a_i}\, : \ \Ec(P,\uu, \aa) \cup \Ec\bigl(P,\uu,\aa^{\<i\>}\bigr) \ \to \
\Ec(P,\uu,\aa) \cup \Ec\bigl(P,\uu,\aa^{\<i\>}\bigr)
$$
is an involution. For \. $i< j$, let
$$\delta_{i \ts j} \, := \, \tau_i \. \tau_{i+1} \. \cdots \. \tau_{j-1} \quad \text{and} \quad
\delta_{j\ts i} \, := \, \tau_{j-1} \. \cdots \. \tau_{i+1} \. \tau_i
$$
be the \defn{promotion operator} starting at position~$i$ and ending in position~$j$, and
the \defn{demotion operator} starting at position~$j$ and ending in position~$i$.

\smallskip

\begin{proof}[Proof of Theorem~\ref{t:vanish}]
Without loss of generality, we can assume that poset \. $P=(X,\prec)$ \.
has a unique minimal element~$\wh{0}$ and unique maximal element~$\wh{1}$.
Since \. $f\big(\wh{0}\big) = 1$ \. and \. $f\big(\wh{1}\big) = n$ \. for
every linear extension \. $f\in \Ec(P)$, we can also add \. $u_0 = \wh{0}$ \.
and \. $u_{k+1} = \wh{1}$ \. to the chain \.
$u_1\prec \ldots \prec u_k$\ts, and set \. $a_0=1$, \. $a_{k+1}=n$.
Equation~\eqref{eq:Sta-gen-vanish} then simplifies to
\begin{align}\label{eq:a_conditions}
a_j\. - \. a_i \ > \ h(u_i,u_j) \ \quad \text{ for all } \quad 0\leq i<j \leq k+1.
\end{align}

First, let us show  that inequalities~\eqref{eq:a_conditions} always hold.
Indeed, in every word \ts $\xx_f$ \ts
corresponding to a linear extension \. $f\in \Ec(P,\uu,\aa)$ \. with \. $f(u_i)=a_i$\ts,
we must have the elements from \. $\bigl(u_i,u_j\bigr)_P$ \. lie between~$u_i$ and~$u_j$,
and hence \. $a_j-a_i>h(u_i,u_j)$.

\smallskip

In the opposite direction, assume that the inequalities~\eqref{eq:a_conditions} hold
for all \. $0\leq i<j \leq k+1$.  To prove that \. $\Ec(P,\uu,\ts\aa)\ne \emp$,
proceed by induction on~$k$.
For \ts $k=1$, let \ts $\al$ \ts be a word obtained from totally ordering of the poset
interval \. $\big(\wh{0},u_1\big)$, and \ts $\be$ \ts be a word obtained from totally ordering
of the poset interval \. $\big(u_1,\wh{1}\big)$. Order the remaining elements of \ts
$P-u_1$ \ts into a word~$\ga$, and then insert \ts $u_1$ \ts at position \ts $a_1$ \ts in the
concatenation \. $\al\ts\ga\ts\be$. Since \ts $u_1 \parallel \ga$, \. $a_1>|\al|$, and \. $n-a_1>|\be|$,
this is a linear extension in \ts $\Ec(P,u_1,a_1)$.

Suppose now that the result holds for all sequences of length \. $k\ge 1$,
and let \. $\yy \in \Ec(P,\ts \uu,\ts \aa)$. Now let \ts $u_{k+1}$ \ts
be another element and \ts $a_{k+1}$ \ts satisfy the conditions
in the statement. Suppose that the position of \ts $u_{k+1}$ \ts is at \. $a'\neq a_{k+1}$.
Let us show that if \ts $a'<a_{k+1}$, then we can move \ts $u_{k+1}$ \ts to position \ts $a'+1$
without moving the other~$u$'s, and if \ts $a'>a_{k+1}$ we can move \ts $u_{k+1}$ \ts one position down.
Repeating this we will eventually get \ts $u_{k+1}$ \ts at a position \ts $a'=a_{k+1}$, to obtain
the desired linear extension.

\smallskip

From now on we act with the group $G$, and the promotion and demotion operators $\delta_{ij}$ to transform $\yy$.

Let \. $a'<a_{k+1}$. Since \. $n-a'>n-a_{k+1}\geq h(u_{k+1},\wh{1})$, there must be at least
one element in~$\yy$ appearing after \ts $u_{k+1}$ \ts which is incomparable to \ts $u_{k+1}$; denote by \ts $z$ \ts the first such element, at position $t$. Then the elements between \ts $u_{k+1}$ \ts and \ts $z$ \ts are incomparable with $z$, since they must be $\succ u_{k+1}$. Inserting~{\ts $z$} \ts immediately before \ts $u_{k+1}$ \ts, i.e.\ forming the word \ts $\yy\delta_{t a'}$, then respects the partial order and shifts \ts $u_{k+1}$ \ts to position~$\ts a'+1$.
Note that this transformation does not move $u_1,\ldots, u_k$ since we assume that $u_k \prec u_{k+1}$, which implies that $a_k <a$.

Suppose now that \. $a'>a_{k+1}$. Then \. $a'> h(\wh{0},u_{k+1}) +1$, so there is an element before \ts $u_{k+1}$ \ts
which is incomparable to \ts $u_{k+1}$. Let \ts $z_0$ \ts be the last such element, and suppose that it is at position \. $i_0\in (a_{r-1},a_{r})$. Note that the elements between \ts $z_0$ \ts and \ts $u_{k+1}$ in~$\yy$ must be incomparable to~{\ts $z_0$}, since by minimality they must be all~{\ts$\prec u_{k+1}$}. If $z_0$ appears after~$\ts u_k$, then we obtain $\yy\delta_{i_0a'}$, where $z$ is inserted  after \ts $u_{k+1}$ \ts and  \ts $u_{k+1}$ shifts to position \ts $a'-1$.  Otherwise, since \. $a'-a_k > h(u_{k+1},u_k)+1$, there is an element \ts $z_k$ \ts between $u_k$ and $u_{k+1}$ in $\yy$ such that  $z_k$ is incomparable to either $u_k$ or $u_{k+1}$. Since \. $z_k \neq z_0$, we must have \. $z_k  \parallel  u_k$, and let this \ts $z_k$ \ts be the first such element after \ts $u_k$ \ts at position~$i_k$.

In general, for every \.$t \in [r,k]$\. we define $z_t$ at position \.$i_t<a'$\. to be the first element after \.$u_t$\., such that \ts $z_t \parallel u_t$. Note that such element exists, which is seen as follows. Since  \.$h(u_t,u_{k+1})+1<a'-a_t$\. there is an element \.$z$ \. between \.$u_t$ \. and \. $u_{k+1}$ \. incomparable to at least one of them. However, for all such \.$z\prec u_{k+1}$\., so we must have \.$z \parallel u_t$. Next, observe that \.$z_t$  is incomparable to all elements in $\yy$ appearing between $u_t$ and $z_t$.
Now transform $\yy$ as follows. First, let \ts $\yy^0:= \yy \delta_{i_0 \. a'}$, and note that here $z_0$ is sent to position $a'$ and all elements in between have been shifted down one position. Let \ts $a'_t:=a_t-1$ \ts and \ts $i'_t:=i_t-1$ \ts be the positions of $u_t$ and $z_t$ in~$\yy^0$. Next, let $\yy^1 = \yy^0 \. \delta_{i'_r \. a_r'}$ which moves $z_r$ before $u_r$, so the position of $u_r$ is restored to $a_r$ as well as all elements between them. Suppose that $i_r \in (a_{p-1}, a_{p})$. Let then $\yy^2:=\yy^1\. \delta_{i'_p \. a'_p}$, so the element $z_p$ is demoted to the position before $u_p$, and thus all other elements at positions $[a_p,\.i_p]$ have now restored their original position from~$\yy$. Continuing this way, if \. $i_p \in (a_{q-1},\.a_q)$\. we obtain $\yy^3:= \yy^2\. \delta_{i'_q\. a'_q}$ and so on until we have shifted
all elements \ts $u_r,\ldots,u_k$ \ts to their positions \ts $a_r,\ldots,a_k$. Also, \ts $u_{k+1}$ \ts is at position~$a'-1$, which is what we needed to show. This completes the proof.
%
\end{proof}

\smallskip

\begin{proof}[Proof of Corollary~\ref{cor:Sta-gen-vanish-poly}]
The first part follows trivially from~\eqref{eq:Sta-gen-vanish} or, equivalently,
its simplified version~\eqref{eq:a_conditions}.  For the second part, note that the proof above is
completely constructive and builds \. $f\in \Ec(P,\uu,\aa)$ \. in polynomial time starting with
a linear extension \. $g\in \Ec(P)$.  The details are straightforward.
\end{proof}

\smallskip

\subsection{Uniqueness conditions}\label{ss:restricted-unique}

In the next lemma, we show that, given a linear extension \ts $f \in \Ec(P,\uu,\aa)$,
we can check if such~$f$ is unique in polynomial time.

In the notation of  Theorem~\ref{t:vanish},
let \. $v_i:=f^{-1}(a_i-1)$ \. and \. $w_i:=f^{-1}(a_i+1)$ \.
for \. $1\le i \le k$.
We adopt the convention that \ts $v_1=\widehat{0}$ \ts if \ts $a_1=1$,
and \ts $w_k=\widehat{1}$ \ts if \ts $a_k=n$.  For \. $1 \leq i \leq  j \leq n$,
let
\[ f^{-1}[i,j] \ := \ \bigl\{\ts f^{-1}(i)\., \.\ldots \.,  f^{-1}(j)\ts  \bigr\}.
\]

\smallskip

\begin{thm}\label{t:gen-Stanley-unique}
In the notation of Theorem~\ref{t:vanish}, let \. $f \in \Ec(P,\aa,\uu)$ \.
be a linear extension as in the theorem.
Then \ts $|\Ec(P,\uu,\aa)|=1$ \, \underline{if and only if} \,
	the following conditions hold:
	\begin{equation*}
\aligned
		& (1) \quad \text{$f^{-1}[a_i+1,a_{i+1}-1]$ \. forms a chain in $P$  for every \. $1\le i \le k$, \ts and } \\
	& (2) \quad  \text{There are no \. $1 \leq i \leq j \leq k$\ts, \. such that \. $\{v_i,w_j\} \ts \parallel \ts f^{-1}[a_i,a_j]$.}	
\endaligned
\end{equation*}
\end{thm}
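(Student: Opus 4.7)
The proof will hinge on Theorem~\ref{t:LE-gen-transitive}: since $\GG_n(\aa)$ acts transitively on $\Ec(P,\uu,\aa)$, we have $|\Ec(P,\uu,\aa)| = 1$ if and only if every generator $\tau_{pq}$ of $\GG_n(\aa)$ (with $p < q$, $p, q \notin A := \{a_1,\ldots,a_k\}$) fixes the word $\xx_f$. As in the proof of Theorem~\ref{t:vanish}, I would assume without loss of generality that $a_{i+1} - a_i \geq 2$, and adjoin $u_0 = \widehat{0}$, $u_{k+1} = \widehat{1}$ at positions $a_0 = 0$ and $a_{k+1} = n+1$ to handle the boundary intervals uniformly. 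Throughout, recall that validity of the swap $\tau_{pq}$ on $\xx_f$ requires $f^{-1}(p) \parallel f^{-1}(q)$ together with $f^{-1}(p) \parallel f^{-1}(m)$ and $f^{-1}(q) \parallel f^{-1}(m)$ for every $m \in (p,q)$.

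For the ``only if'' direction, I would argue by contrapositive. If (1) fails on some interval $f^{-1}[a_t+1,a_{t+1}-1]$, then picking a pair $(p,q)$ in this interval with $f^{-1}(p) \parallel f^{-1}(q)$ that minimizes $q - p$, a standard shortest-counterexample argument forces $q = p+1$: otherwise any intermediate $m$ satisfies both $f^{-1}(p) \prec f^{-1}(m)$ and $f^{-1}(m) \prec f^{-1}(q)$, yielding $f^{-1}(p) \prec f^{-1}(q)$, a contradiction. Then $\tau_{p,p+1} \in \GG_n(\aa)$ produces a distinct linear extension. If (2) fails at indices $i \leq j$, the cross-incomparability of $\{v_i, w_j\}$ with $f^{-1}[a_i,a_j]$ together with $v_i \parallel w_j$ (the antichain clause implicit in the notation, and in any case compatible with $f$ being a linear extension) makes $\tau_{a_i-1,\,a_j+1} \in \GG_n(\aa)$ a valid nontrivial swap.

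For the ``if'' direction, assume (1) and (2) hold, fix a generator $\tau_{pq}$ with $p,q \notin A$, and show it fixes $\xx_f$. Let $s, t$ be such that $a_{s-1} < p < a_s$ and $a_{t-1} < q < a_t$. If $s = t$, both positions lie in the chain $f^{-1}[a_{s-1}+1, a_s - 1]$ supplied by (1), so $f^{-1}(p) \prec f^{-1}(q)$ blocks the swap. If $s < t$, the chain structure of $f^{-1}[a_{s-1}+1,a_s-1]$ gives $f^{-1}(p) \prec f^{-1}(m)$ for all $m$ with $p < m < a_s$; since validity of the swap demands incomparability for every such $m$, we are forced to have $p = a_s - 1$, and therefore $f^{-1}(p) = v_s$. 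By a symmetric argument, $q = a_{t-1}+1$ and $f^{-1}(q) = w_{t-1}$. Then the validity condition on $\tau_{pq}$ says precisely that $\{v_s, w_{t-1}\}$ is cross-incomparable to $f^{-1}[a_s,a_{t-1}]$ with $v_s \parallel w_{t-1}$, directly contradicting (2) at $(i,j) = (s,t-1)$.

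The main obstacle will be correctly pinning down the antichain clause inside condition (2) and verifying that the forced swap in the necessity direction yields a genuinely different linear extension in all boundary configurations (in particular, at $a_1 = 1$ and $a_k = n$, where the conventions for $v_1, w_k$ from Theorem~\ref{t:vanish} have to be invoked). Once these details are in place, Corollary~\ref{cor:Sta-gen-uniqueness-poly} is immediate: a representative $f \in \Ec(P,\uu,\aa)$ can be produced in polynomial time by Corollary~\ref{cor:Sta-gen-vanish-poly}, after which conditions (1) and (2) are trivially checkable in polynomial time.
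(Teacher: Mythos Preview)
Your proposal is correct and follows essentially the same route as the paper: both directions rest on Theorem~\ref{t:LE-gen-transitive}, with the $\Leftarrow$ direction reducing to the observation that a nontrivial generator $\tau_{pq}\in\GG_n(\aa)$ forces $p=a_i-1$ and $q=a_j+1$ (via the chain condition~(1)) and then contradicts~(2), exactly as in the paper. Your treatment is in fact slightly more explicit, in that you spell out the shortest-counterexample argument for~(1) and the case split $s=t$ versus $s<t$, whereas the paper asserts these steps in one line; the concern you flag about whether $v_i\parallel w_j$ is implicit in condition~(2) is a genuine wrinkle in the statement (the paper's definition of $\tau_{ij}$ does not literally require $x_i\parallel x_j$), but it does not affect the argument for the $\Leftarrow$ direction and is harmless for $\Rightarrow$ once one notes the swapped word must remain a linear extension.
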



\begin{proof}
For the \ts $\Rightarrow$ \ts direction, note that~$(1)$ follows directly. Indeed,
recall that \. $e(P)>1$ \.  unless \ts $P$ \ts is a chain.  Therefore if the restriction of \ts $P$ to
\. $\big\{f^{-1}(a_i+1),\ldots,f^{-1}(a_{i+1}-1)\big\}$ \. is not a chain, then there is more than one
linear extension over these elements, which extends to the desired linear extension \. $g\in \Ec(P,\uu,\aa)$, \. $g\ne f$.
For~$(2)$, suppose to the contrary that \. $v_i,w_j \ts \parallel \ts f^{-1}[a_i,a_j]$.
Then swapping the value of \ts $f(v_i)$ \ts and $f(w_j)$ \ts via $\xx_f \tau_{a_i-1 \ts a_j+1}$ we  get a new linear extension,
a contradiction.
	
\smallskip

For the \ts $\Leftarrow$ \ts direction, suppose that~(1) and~(2) hold and there exists \ts $g\in \Ec(P,\uu,\aa)$ \ts
for some \ts $g\ne f$. By Theorem~\ref{t:LE-gen-transitive} we have that there is an element \ts $\pi \in \GG_n(\aa)$, s.t. $\xx_f \pi = \xx_g$. Write $\pi$ as the minimal (reduced) product of transpositions $\pi = \tau_{r_1 \ts s_1} \cdots$ which act nontrivially. So $\xx_f \tau_{r_1 \ts s_1} \neq \xx_f$ and thus $\{f^{-1}(r_1), f^{-1}(s_1)\} \ts \parallel \ts f^{-1}[r_1+1,s_1-1]$. Since the elements
in \ts  $f^{-1}[a_i+1,a_{i+1}-1]$ \ts form a chain we must have that $r_1 =a_i-1$ for some $i$ and that $s_1 = a_j+1$ for some $j$. Note that by definition \ts $r_1<s_1$ \ts and \ts $r_1,s_1 \not\in \aa$. Thus \. $\{ f^{-1}(a_i-1), f^{-1}(a_j+1)\} \parallel f^{-1}[a_i,a_j]$, and so condition~(2) does not hold, a contradiction.
\end{proof}

\smallskip

\begin{proof}[Proof of Corollary~\ref{cor:Sta-gen-uniqueness-poly}]
By the first part of Corollary~\ref{cor:Sta-gen-vanish-poly}, we can decide if
\. $|\Ec(P,\uu,\aa)|>0$ \. in polynomial time.  By the second part of the same
corollary, we can find a linear extension \. $f\in \Ec(P,\uu,\aa)$ \. in polynomial
time.  By Theorem~\ref{t:gen-Stanley-unique}, we can decide if such~$f$ is unique
in polynomial time.
\end{proof}

\medskip

\section{Injective proof of the Sidorenko inequality} \label{s:sid}

\subsection{Preliminaries} \label{ss:sid-pre}

Let $P=(X,\prec)$ be a poset with $|X|=n$ elements.  Denote by \ts $P|_J$ \ts the
restriction of \ts $P$ \ts to a subset \ts $J\ssu X$.  We write \ts $P-y$ \ts
to denote the restriction \ts $P|_{X-y}$.
Denote by  \. $P^\ast=(X,\prec^\ast)$ \. the \defn{dual poset}~:
$$x\prec^\ast y \quad \Longleftrightarrow \quad y \prec x\,, \quad \text{for all} \ \ x, \ts y \in X.
$$
Clearly, \ts $e(P^\ast)=e(P)$.

Denote by \ts $\cC(P)$ \ts the set of chains, and by $\cA(P)$ the set of
antichains in~$P$.   The \defn{comparability graph} \ts $\Com(P)=(X,E)$ \ts
is defined by \. $E=\bigl\{(x,y)~:~x\prec y, \. \text{ where } \. x,y\in X\bigr\}$. Note that
the chains in~$P$ are \emph{cliques} (complete subgraphs) in $\Com(P)$.
Similarly, the antichains in~$P$ are \emph{stable} (independent) \emph{sets} in $\Com(P)$.

Throughout this section, we think of the \defng{promotion} in a different way,
as a map from linear extensions to chains in the poset.
Formally, for $f\in \LE(P)$, let \. $x_1=f^{-1}(1)$. For \ts $i>1$, let $x_{i}\in X$ be
an element with the smallest value of~$f$ on \. $\{y~:~x_{i-1}\prec y\}$.
This gives a \defn{promotion chain} \. $C= \bigl[x_1\to x_2\to\ldots \to x_\ell\bigr]\in \cC(P)$, which
can also be viewed as the \emph{DFS path} in the  \emph{Hasse diagram} of~$P$.
Denote by \. $\Phi: \LE(P)\to \cC(P)$ \. the map \. $\Phi(f) = C$.

\bigskip

\begin{lemma}  \label{lem:DFS}
For all $P=(X,\prec)$  and $y\in X$ we have:
$$e(P-y) \. = \. \bigl|\bigl\{g\in \LE(P)\,:\, y \in \Phi(g)\bigr\}\bigr|
$$
\end{lemma}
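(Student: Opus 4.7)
My plan is to prove the lemma by exhibiting an explicit bijection
\[
\Psi \, : \, \bigl\{\ts g \in \LE(P) \, : \, y \in \Phi(g)\ts\bigr\} \ \longrightarrow \ \LE(P-y),
\]
from which the equality of cardinalities follows immediately.

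Given $g$ with promotion chain $\Phi(g) = [x_1, \ldots, x_\ell]$ and $y = x_i$, I would define $h = \Psi(g)$ by sliding the prefix values one step up the chain and decrementing the remaining values by one:
\[
h(x_j) \, := \, g(x_{j+1}) - 1 \quad \text{for} \ \ 1 \le j \le i-1, \qquad h(z) \, := \, g(z) - 1 \quad \text{for every other} \ \ z \in X-y.
\]
Since $g(x_1) = 1$, the slide exactly vacates the value $1$ that the decrement requires, so $h$ is a bijection from $X-y$ onto $\{1, \ldots, n-1\}$.

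The first step is to verify that $h \in \LE(P-y)$. For pairs of elements both lying outside the prefix $\{x_1, \ldots, x_{i-1}\}$ this is immediate from $g \in \LE(P)$. The critical case is a pair $x_j \prec v$ with $j < i$ and $v \neq y$: the defining property that $x_{j+1}$ minimizes $g$ on the strict upper ideal of $x_j$ gives $g(x_{j+1}) \le g(v)$, with strict inequality since $v \neq x_{j+1}$, whence $h(x_j) = g(x_{j+1}) - 1 < g(v) - 1 = h(v)$.

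For the inverse, given $h \in \LE(P-y)$ I would recover the prefix chain $[x_1, \ldots, x_{i-1}]$ by a greedy backward walk from $y$: set $x_{i-1}$ to be the $\prec$-predecessor of $y$ in $P$ of largest $h$-value, then $x_{i-2}$ to be the $\prec$-predecessor of $x_{i-1}$ of largest $h$-value, and continue until the walk lands on a minimal element $x_1$ of~$P$. The linear extension $g$ is then read off by inverting the slide: $g(x_1) := 1$, $g(x_{j+1}) := h(x_j) + 1$ for $1 \le j \le i-1$, and $g(z) := h(z) + 1$ for the remaining~$z$. The main obstacle I expect is checking that $y$ actually lies on the DFS promotion chain of the reconstructed $g$, which reduces to comparing the slid prefix values $g(x_{j+1}) - 1$ against the generic decremented values $g(v) - 1$ and invoking the order-preservation of $h$ on $P-y$ to confirm that at each step the max-$h$-value predecessor matches the min-$g$-value choice defining $\Phi$.
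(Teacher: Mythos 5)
Your bijection is correct, and after unwinding a duality it is essentially the paper's map, but your route is more direct and self-contained. The paper goes the other way: it sends $f\in \LE(P-y)$ to a linear extension of $P$ by taking the promotion path $[y\to x_1\to\cdots\to x_k]$ inside the upper ideal $B(y)$, sliding the values down that chain and putting $n$ at the top, and then it must invoke the dual poset (``reversing the role of $P$ and $P^\ast$''), because the extension so constructed has $y$ on its promotion chain only when read in $P^\ast$. You instead work directly with the initial segment $x_1\prec\cdots\prec x_{i-1}\prec y$ of $\Phi(g)$, and your greedy backward walk (largest $h$-value among strict predecessors) is precisely the paper's promotion path computed in the dual poset, so the two bijections coincide after dualizing; what your presentation buys is an argument with no appeal to $P^\ast$ and an explicit two-sided inverse, since the max-predecessor property is exactly what makes the reconstructed $g$ a linear extension (any $u\prec x_j$ off the chain has $h(u)<h(x_{j-1})$) and forces its promotion chain to pass through $y$. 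One small repair in your forward verification: when the comparison element $v$ itself lies in the prefix, say $v=x_m$ with $j<m\le i-1$ (which includes the possible case $v=x_{j+1}$), one has $h(v)=g(x_{m+1})-1$, not $g(v)-1$; the inequality $h(x_j)<h(v)$ still holds because $g$ increases along the chain, so $g(x_{j+1})<g(x_{m+1})$, but this case should be treated separately rather than folded into the formula $h(v)=g(v)-1$.
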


\medskip

\begin{proof}
Consider a bijection
$$\vp: \,
\LE(P-y)
\.  \longrightarrow  \.
\bigl\{g\in \LE(P)\,:\,y \in \Phi(f)\bigr\}
$$
defined as follows.  Let \. $f\in \LE(P-y)$, and let
\. $[y\to x_1 \to \ldots \to x_k]$ \. be the
promotion path in the upper order ideal \. $B(y) = \{x\in X\.:\. x \succ y\}$.
Define \. $g=\vp(f) \in \LE(P)$ \. as follows.  Let \. $g(y):=f(x_1)$, \.
$g(x_i):=f(x_{i+1})$ \. for \. $1\le i < k$, and \. $g(x_k):=n$.  Observe
that in \ts $P^\ast$ \ts we now have \.
$\Phi(f) \. = \. \bigl[x_k\to\ldots \to x_1 \to y \to \ldots \bigr]$.
Reversing the role of \ts $P$ \ts and \ts $P^\ast$ \ts implies the result.
\end{proof}

\bigskip

\begin{cor}[{\rm {see~\cite{EHS}}}{}] \label{cor:antichain}
For every antichain $A\in \cA(P)$ we have
\begin{equation}\label{eq:EHS-ineq}
\sum_{y \in A} \. e(P-y) \, \le \, e(P).
\end{equation}
Furthermore, when \ts $A\subseteq X$ \ts is the set of minimal elements,
the inequality~\eqref{eq:EHS-ineq} is an equality.
\end{cor}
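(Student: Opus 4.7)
The plan is to deduce the corollary directly from Lemma~\ref{lem:DFS} by a simple double-counting argument based on the key observation that a chain and an antichain in any poset share at most one element.

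First, I would rewrite the left-hand side of~\eqref{eq:EHS-ineq} using Lemma~\ref{lem:DFS}:
\[
\sum_{y \in A} e(P-y) \ = \ \sum_{y \in A} \bigl|\{g \in \LE(P) \,:\, y \in \Phi(g)\}\bigr| \ = \ \bigl|\{(g,y) \,:\, g \in \LE(P),\ y \in A \cap \Phi(g)\}\bigr|.
\]
Since $\Phi(g) \in \cC(P)$ is a chain and $A \in \cA(P)$ is an antichain, their intersection contains at most one element. Hence the set of pairs above is counted by $\sum_{g \in \LE(P)} |A \cap \Phi(g)| \le |\LE(P)| = e(P)$, which gives the desired inequality.

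For the equality statement when $A$ is the set of minimal elements, I would observe that every promotion chain $\Phi(g) = [x_1 \to x_2 \to \cdots \to x_\ell]$ starts at $x_1 = g^{-1}(1)$, which is necessarily a minimal element of $P$ (it receives the smallest value under the linear extension), and all subsequent $x_i$ strictly dominate~$x_1$, hence are not minimal. Thus $|A \cap \Phi(g)| = 1$ for every $g \in \LE(P)$, so the inequality is tight.

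I do not expect any real obstacle here; the lemma does essentially all the work, and the chain-antichain intersection bound is immediate. The only mild care required is the equality case, where one must verify that the promotion chain contains \emph{exactly} one minimal element (not merely at least one), but this is immediate from the DFS description of~$\Phi$.
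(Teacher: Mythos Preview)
Your proposal is correct and follows essentially the same approach as the paper: apply Lemma~\ref{lem:DFS}, use that $|A\cap \Phi(g)|\le 1$ since a chain and an antichain meet in at most one element, and for the equality case observe that the promotion chain begins at $g^{-1}(1)$, which is minimal. The paper's presentation is slightly more compressed (writing the sum directly as $\bigl|\{f\in\LE(P):|\Phi(f)\cap A|=1\}\bigr|$ rather than via pairs), but the argument is the same.
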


\begin{proof}
Note that for every \ts $C\in \cC(P)$ \ts and \ts $A\in \cA(P)$, we have \ts $|A\cap C|\le 1$.
Thus, we have:
$$\sum_{y \in A} \. e(P-y) \, = \,
\bigl|\bigl\{f\in \LE(P)\,:\,|\Phi(f)\cap A|=1\bigr\}\bigr| \, \le \, |\LE(P)| \, = \, e(P),
$$
which proves~\eqref{eq:EHS-ineq}.  For the second part, note that for every \. $f\in \Ec(P)$,
the promotion path \ts $\Phi(f)$ \ts starts with the minimal element in~$A$.  This implies
that~\eqref{eq:EHS-ineq} is an equality, as desired.
\end{proof}

\smallskip

\begin{rem}\label{r:comp}
Lemma~\ref{lem:DFS} is implicit in~\cite{EHS}, which only discusses
equality cases (cf.\ Corollary~\ref{cor:antichain}). By~\cite[Thm~4]{Sid},
map~$\Phi$ gives the \emph{linear extension flow} through $\Com(P)$ viewed as
directed network.  Although Sidorenko gives a combinatorial construction
of this flow in~\cite[Rem~1.2]{Sid}, this construction is also inexplicit.

Let us mention that second part of Corollary~\ref{cor:antichain} implies
by induction that \ts $e(P)$ \ts depends only on the comparability graph $\Com(P)$,
see~\cite{EHS,Sta-promo}.  The same holds for the order polynomial \ts $\Omega(P,t)$,
and can be proved using Ehrhart polynomials~\cite{Sta-two},
cf.~$\S$\ref{ss:finrem-Ehrhart}.  Alternatively, this result can be
shown via certain ``turning upside-down'' flips discussed in
\cite[Exc.~3.163]{Sta-EC}.
\end{rem}

\smallskip

\subsection{Sidorenko's inequality} \label{ss:sid-proof}
As in the introduction, let \. $P=(X,\prec)$ \. and \. $Q=(X,\prec')$ \.
be two posets on the same ground set, such that \. $|C\cap C'|\le 1$ \.
for all \. $C\in \cC(P)$ \. and \. $C'\in \cC(Q)$.
Then \. $\cC(P) \subseteq \cA(Q)$ \. and \. $\cA(P) \subseteq \cC(Q)$, by definition.
%

\bigskip

\begin{lemma}[{\rm cf.~\cite[Lemma~10]{Sid}}{}] \label{lem:main}
For all $P$ and $Q$ as above, we have:
$$
\sum_{y\in X} \, e(P-y) \. e(Q-y) \, \le \, e(P) \. e(Q)\ts.
$$
\end{lemma}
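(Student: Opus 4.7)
The plan is to combine the chain interpretation of Lemma~\ref{lem:DFS} with the chain-antichain duality built into the hypothesis. Write $\Phi_P : \Ec(P) \to \cC(P)$ and $\Phi_Q : \Ec(Q) \to \cC(Q)$ for the promotion-chain maps associated to the two posets. By Lemma~\ref{lem:DFS} applied to $P$ and to $Q$ separately, we have
\[
 e(P-y) \,=\, \bigl|\{g \in \Ec(P) : y \in \Phi_P(g)\}\bigr|, \qquad
 e(Q-y) \,=\, \bigl|\{h \in \Ec(Q) : y \in \Phi_Q(h)\}\bigr|.
\]
Multiplying these and reading the product as the number of triples $(g,h,y)$ with $y \in \Phi_P(g) \cap \Phi_Q(h)$, I would sum over $y \in X$ and exchange the order of summation.

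The core step is then the following. Each $\Phi_P(g)$ is a chain in $P$, and the hypothesis $|C \cap C'| \le 1$ for all $C \in \cC(P)$, $C' \in \cC(Q)$ is precisely the statement $\cC(P) \subseteq \cA(Q)$ (a chain in $P$ meets every chain in $Q$ in at most one element, so it is an antichain in $Q$). Since $\Phi_Q(h) \in \cC(Q)$, the intersection $\Phi_P(g) \cap \Phi_Q(h)$ is the intersection of an antichain of $Q$ with a chain of $Q$, hence has size at most $1$.

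Putting these together,
\[
\sum_{y \in X} e(P-y)\, e(Q-y) \,=\, \sum_{(g,h)\in \Ec(P)\times \Ec(Q)} \bigl|\Phi_P(g) \cap \Phi_Q(h)\bigr| \,\le\, \bigl|\Ec(P)\bigr|\cdot \bigl|\Ec(Q)\bigr| \,=\, e(P)\,e(Q),
\]
which is exactly the claim. There is no real obstacle: all the work has already been done in Lemma~\ref{lem:DFS}, and what remains is the one-line observation that the hypothesis on $(P,Q)$ is equivalent to saying that promotion chains of $P$ are antichains of $Q$. The same argument suggests an effective version, since the map $(g,h) \mapsto$ the unique element of $\Phi_P(g)\cap \Phi_Q(h)$ (when nonempty) is an explicit injection from $\bigsqcup_{y\in X} \Ec(P-y)\times \Ec(Q-y)$ into $\Ec(P) \times \Ec(Q)$, which should be the ingredient used later to obtain Theorem~\ref{t:sid-SP}.
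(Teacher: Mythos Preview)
Your proof is correct and is essentially the paper's argument. The paper's proof of the lemma is written slightly asymmetrically---it applies Lemma~\ref{lem:DFS} only to~$P$ and then invokes Corollary~\ref{cor:antichain} for~$Q$---but since Corollary~\ref{cor:antichain} is itself proved via Lemma~\ref{lem:DFS} for~$Q$, the two unwind to the same double-counting; indeed your symmetric formulation \(\sum_{(g,h)}|\Phi_P(g)\cap\Phi_Q(h)|\) is exactly what the paper writes in its proof of the generalization Theorem~\ref{t:sid-gen}.
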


\smallskip

\begin{proof}  We have:
$$
\aligned
\sum_{y\in X} \. e(P-y) \. e(Q-y) \, & =_{\text{Lem~\ref{lem:DFS}}} \,
\sum_{y\in X} \. e(Q-y) \, \sum_{C\in \cC(P) \ : \ C\ni y} \. |\{f\in \LE(P)\,:\,\Phi(f)=C\}|
\\& = \hskip.95cm
\sum_{C\in \cC(P)} \, \sum_{y\in C} \. e(Q-y) \, \cdot \, |\{f\in \LE(P)\,:\,\Phi(f)=C\}|
\\
& \le_{\text{Cor.~\ref{cor:antichain}}}
\,
\sum_{C\in \cC(P)} \, e(Q) \, \cdot \, |\{f\in \LE(P)\,:\,\Phi(f)=C\bigr\}|
\\
& \le \   e(Q)  \,  \sum_{C\in \cC(P)} \,|\{f\in \LE(P)\,:\,\Phi(f)=C\}| \quad = \
e(P) \. e(Q).
\endaligned
$$
Here in the third line, Cor.~\ref{cor:antichain} applies to poset~$Q$, since every chain in~$P$
is an antichain in~$Q$.
\end{proof}

\smallskip

\begin{proof}[Proof of Theorem~\ref{t:sid}]
The theorem follows from Lemma~\ref{lem:main}, by induction on $n=|X|$.
\end{proof}

\smallskip

\begin{cor}[{\cite[Thm~11]{Sid}}{}]\label{c:sid-equality}
In notation of Theorem~\ref{t:sid}, the inequality~\eqref{eq:sid} is
an equality \. \underline{if and only if} \. $P$ \ts
is a \emph{series-parallel poset}.
\end{cor}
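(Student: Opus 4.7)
The plan is to prove both implications inductively, in parallel with the proof of Theorem~\ref{t:sid} itself, using the fact that series-parallel posets are exactly those posets avoiding the $N$-poset (the 4-element poset with relations $a \prec c$, $b \prec c$, $b \prec d$) as an induced subposet.

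For the ``if'' direction, I would induct on the series-parallel decomposition of $P$. The base case of a one-element poset is trivial with $Q = P$. For the inductive step, if $P = P_1 \oplus P_2$ with $|P_i| = n_i$, I would take $Q := Q_1 + Q_2$, where each $Q_i$ is the companion poset produced inductively for $P_i$; if instead $P = P_1 + P_2$, I would take $Q := Q_1 \oplus Q_2$. Each construction swaps series with parallel, and I would verify that the chain intersection condition \eqref{eq:sid-chain-condition} is inherited: a chain of $P_1 \oplus P_2$ splits as $C_1 \cup C_2$ with $C_i$ a chain of $P_i$, while any chain of $Q_1 + Q_2$ lies in a single $Q_i$, so intersections still have size at most $1$ by hypothesis. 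The counting identity
\[
e(P_1 \oplus P_2)\, e(Q_1 + Q_2) \ = \ e(P_1)\ts e(P_2) \cdot \binom{n_1+n_2}{n_1}\, e(Q_1)\ts e(Q_2) \ = \ \binom{n}{n_1} n_1! \ts n_2! \ = \ n!\ts,
\]
together with the symmetric identity for $P = P_1 + P_2$, closes the induction.

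For the ``only if'' direction, I would induct on $n = |X|$. Suppose $e(P) e(Q) = n!$. Tracing the proof of Theorem~\ref{t:sid}, we have $(n-1)! \le e(P-y)e(Q-y)$ for each $y$ by the inductive Sidorenko bound, while Lemma~\ref{lem:main} gives $\sum_{y \in X} e(P-y) e(Q-y) \le e(P) e(Q) = n!$; these force $e(P-y)e(Q-y) = (n-1)!$ for every $y \in X$. Noting that the restricted pair $(P-y,Q-y)$ still satisfies the chain condition \eqref{eq:sid-chain-condition}, the induction hypothesis yields that $P-y$ is series-parallel for every $y \in X$. Since the class of series-parallel posets is closed under induced subposets and characterized by the absence of an induced $N$, the poset $P$ itself can fail to be series-parallel only if $n = 4$ and $P = N$, for then any deletion of a fifth element would leave a non-series-parallel subposet.

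The main obstacle, and the only essential computation, is ruling out $P = N$. Here I would argue directly: the chain condition \eqref{eq:sid-chain-condition} forces $Q$ to have $a \parallel c$, $b \parallel c$, $b \parallel d$, and a short case analysis shows the only $2$-element chains $Q$ can contain are subsets of $\{a,b\}$, $\{a,d\}$, $\{c,d\}$, while no $3$-element chain is permitted. An enumeration of the finitely many admissible $Q$'s shows $e(Q) \ge 5$, hence $e(P) e(Q) \ge 5 \cdot 5 = 25 > 24 = 4!$, contradicting equality. Combined with the induction, this completes the proof.
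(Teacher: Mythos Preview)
Your proof is correct and is very much in the spirit of what the paper suggests: the paper itself omits the details, merely noting that the result ``follows easily by tracing back the inequalities in the proof of Lemma~\ref{lem:main}.'' Your argument does precisely this---combining Lemma~\ref{lem:main} with the inductive Sidorenko bound to force $e(P-y)\,e(Q-y)=(n-1)!$ for every $y$---and then closes the induction via the Valdes--Tarjan--Lawler characterization of series-parallel posets as the $N$-free posets. That last ingredient is a natural and clean way to finish; the paper does not specify how it intends the reader to conclude.

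One minor simplification: in the base case $P=N$, you do not actually need to enumerate the admissible $Q$'s. Since $e(N)=5$ and $5\nmid 24$, equality $e(P)\,e(Q)=24$ is impossible for any $Q$, and Sidorenko's inequality then gives strict inequality automatically. Your enumeration (yielding $e(Q)\ge 5$) is of course also correct, and in fact shows that the minimum is attained when $Q$ is itself an $N$-poset on a relabeling.
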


\smallskip

The result is well-known and follows easily by tracing back the inequalities in
the proof of Lemma~\ref{lem:main}.  We omit the details.

\smallskip

\begin{proof}[Proof of Theorem~\ref{t:sid-gen}]
We can rewrite the proof of Lemma~\ref{lem:main} as follows:
$$
\aligned
\sum_{y\in X} \. e(P-y) \. e(Q-y)
\ = \ \sum_{f\in \LE(P)} \. \sum_{g\in \LE(Q)} \. \sum_{y\in \Phi(P)\cap \Phi(Q)} \. 1 \
\le \  k \. e(P) \. e(Q).
\endaligned
$$
The result now follows by induction on~$n\ge k$, with the base \ts $n=k$ \ts trivial.
\end{proof}

\smallskip

\begin{proof}[Proof of Theorem~\ref{t:sid-SP}]
Let \. $\be: S_n \to \LE(P_\si)\times \LE\bigl(P_{\ov{\si}}\bigr)$ \. be
the injection defined implicitly by the proof of Theorem~\ref{t:sid} above.
First, observe that \ts $\be$ \ts is computable in polynomial time.  Indeed, by
induction, it is a composition of maps \ts $\be_i$ \ts each consisting of applying
maps \ts $\Phi$ \ts to posets
corresponding to partial permutations $\si_i:=(\si(1),\ldots,\si(i))$
and its dual \ts $\ov{\si_i}$, see the proof of Lemma~\ref{lem:DFS}.

Second, whenever defined, the inverse map \ts $\be^{-1}$ \ts can be computed by the proof
of Lemma~\ref{lem:DFS}, since the inverse of \ts $\Phi$ \ts on~$P$ is a map \ts $\Phi$ \ts on~$P^\ast$.
On the other hand, at each stage, the decision if the inverse of \ts $\be_i$ \ts exists
reduces to a problem whether a given antichain in the \. $Q_i:=P_{\ov{\si_i}}$ \. is a \defn{cut},
i.e.\ it intersects every chain in~$Q_i$.  This is a special case of directed graph
connectivity problem, and thus in~$\poly$. Putting this together implies that
we can decide in polynomial time if \. $(f,g) \in \be(S_n)$,
for all \. $f\in \LE(P_\si)$ \. and \. $g\in \LE\bigl(P_{\ov{\si}}\bigr)$.

In summary, the function \ts $\eta(\si)$ \ts counts the number of pairs of linear
extensions \ts $(f,g)$ \ts as above, such that \ts $(f,g) \notin \be(S_n)$.
Since the problem whether \ts $(f,g) \in \be(S_n)$ \ts can be decided in
polynomial time, this completes the proof.
\end{proof}

\medskip

\section{Final remarks and open problems}\label{s:finrem}

\smallskip

\subsection{Bj\"orner--Wachs inequality}\label{ss:finrem-HP}
In total, we include three proofs of the Bj\"orner--Wachs inequality:
the original injective proof in~$\S$\ref{s:injective}, the probabilistic proof
via Shepp's inequality in~$\S$\ref{s:OP-induction}, and Reiner's proof
via $q$-analogue in~$\S$\ref{s:q-analogue}.  Another proof was given
by Hammett and Pittel in~\cite[Cor.~2]{HP}, who seemed unaware of the
origin of the problem despite having~\cite{BW89} among the references.
Although somewhat lengthy and technical, their proof is completely
self-contained and is based on a geometric probability argument.
It is similar in spirit to Reiner's proof, but without benefits
of the brevity.

\smallskip

\subsection{Order polynomial} \label{ss:finrem-OP}
There is surprisingly little literature on the order polynomials
given that they emerge naturally in both P-partition theory and
discrete geometry.  We refer to~\cite{Joc} for order polynomials in
the case of symmetric posets, which are of independent interest,
and to~\cite{LT} for some computations.

It seems, there are more conjectures and open problems than results
in the subject.  It addition to the Kahn--Saks Conjecture~\ref{conj:KS-mon}, we have our own
Conjecture~\ref{conj:KS-FKG}.  We should warn the reader that there seem
to be insufficient effort towards testing of these conjectures, so it would
be interesting to obtain more computational evidence.

\smallskip

\subsection{Ehrhart polynomial}\label{ss:finrem-Ehrhart}
It is a classical observation by Stanley \cite{Sta-two},
that the order polynomial \ts $\Om(P,t+1)$ \ts is the
\defng{Ehrhart polynomial} of the corresponding
order polytope \ts $\mathcal{O}_P$\ts:
$$
\Ehr(\mathcal{O}_P,t) \, = \, \Om(P,t+1)\ts.
$$
This allows one to translate the results from combinatorial
to geometric language ad vice versa.

Notably, our Example~\ref{e:OP-Stanley} is motivated by Stanley's
\ts {\tt MathOverflow} \ts observation\footnote{Richard P.~Stanley,
\href{https://mathoverflow.net/q/200574}{mathoverflow.net/q/200574}
(March~20, 2015).} that the order
polynomial \ts $\Om(C_1 \oplus A_m,t+1)$ \ts can have negative
coefficients for \ts $m\ge 20$.
We refer to~\cite{LT} for more
on this example and to~\cite{Liu} for the background on non-negative
Ehrhart polynomials and further references.  We refer to
\cite{Cha} for $q$-Ehrhart polynomials, and to~\cite{KS} for further
results.

\smallskip

\subsection{Geometric form of the Kahn--Saks conjecture}\label{ss:finrem-KS}
One can ask if a version of the Kahn--Saks Conjecture~\ref{conj:KS-mon}
holds for general integral polytopes:
\begin{equation}\label{eq:KS-Ehr}
\text{Is} \quad \Ehr(Q,t-1)/t^d \quad \text{weakly decreasing for all} \ \ Q\in \rr^d \ \ \text{and} \ \ t \in \nn_{\ge 1} \. ?
\end{equation}
First, recall the example of
\defng{Reeve's tetrahedron} with vertices
at
$$
(0,0,0) \,, \quad (1,0,0) \,, \quad (0,1,0) \quad \text{and} \quad (1,1,h),
$$
see e.g.\ \cite[Ex.~3.23]{BR} and \cite[$\S$4.1]{GW}.  In this case,
the Ehrhart polynomial has negative signs, and the scaled Ehrhart polynomial
is non-monotone for large values of~$h$.  This shows that the
\defng{geometric Kahn--Saks conjecture}~\eqref{eq:KS-Ehr} does not hold for general lattice polytopes.

On the other hand, it is rather plausible that~\eqref{eq:KS-Ehr} holds for
\defng{antiblocking} (\defng{corner}) \defng{polytopes} \ts
(see e.g.~\cite[$\S$5.9]{Sch}) with integer vertices.
If true, this would imply the Kahn--Saks Conjecture~\ref{conj:KS-mon}.  Indeed,
although the order polytope $\ts \mathcal{O}_P$ \ts is not antiblocking, the
\defng{stable set} (\defng{chain}) \defng{polytope} \ts $\mathcal{C}_P$ \ts is both
altiblocking and has the same Ehrhart polynomial by Stanley's theorem:  \ts
$\Ehr(\mathcal{C}_P,t) = \Ehr(\mathcal{O}_P,t)$, see~\cite{Sta-two}.

Finally, let us
mention that the proof of Proposition~\ref{prop:scaled} can be modified to show that
$$\frac{1}{t^d} \. \Ehr(Q,t-1) \, \ge \, \frac{1}{(k \ts t)^d} \.\Ehr(Q,kt-1)\ts,
$$
for all antiblocking polytopes \ts $Q\in \rr^d$ \ts with integer vertices.
This gives some credence to our speculation~\eqref{eq:KS-Ehr} in this case.

\smallskip

\subsection{Log-concavity and $q$-log-concavity}\label{ss:finrem-log}
The log-concavity for order polynomials proved in Theorem~\ref{thm:log-concave}
is somewhat different from other log-concave inequalities, see e.g.\ \cite{Bra,Huh,CP,Sta2}.
The $q$-log-concavity in Corollary~\ref{c:q-log-concavity} is also classical albeit
less studied, see e.g.\ \cite{Kra,Ler,Sag}.

\smallskip

\subsection{Graham's conjecture}\label{ss:finrem-Graham}
We learned that of Daykin--Daykin--Paterson paper \cite[Thm~2]{DDP}
proving Graham's conjecture (Theorem~\ref{conj:Graham}) by accident,
while revising the paper.   We chose to keep our
Corollary~\ref{t:Graham-asy} as a nice application of our tools.  Most recently,
the first and second authors found a new proof of Theorem~\ref{conj:Graham} based
on the \emph{Ahlswede--Daykin inequality}, and further generalized this inequality
to a multivariate version \cite[$\S$9]{CP3}.

\smallskip

\subsection{Sidorenko inequality}\label{ss:finrem-sid}
Note that another combinatorial proof of Sidorenko's inequality (Theorem~\ref{t:sid})
was independently found in \cite[$\S$4.1]{GG}, where the authors
gave an elegant explicit construction of a surjection proving~\eqref{eq:sid}.  Unfortunately,
the proof of correctness of that surjection is technical and cannot be easily inverted
to obtain the desired injection.
More precisely, the authors give a explicit surjection \.
$\al: \LE(P_\si)\times \LE\bigl(P_{\ov{\si}}\bigr) \to S_n$\ts.
Unfortunately, the proof in~\cite{GG} is technical and indirect,
so an explicit injection requires further effort.

As we mentioned in the introduction, our injection~$\be$ defined implicitly
in the proof of Theorem~\ref{t:sid} likely coincides with an explicit injection
in~\cite{GG1}, since both essentially reverse engineer and make effective
the original proof by Sidorenko~\cite{Sid}.  The connection with the argument
in~\cite{StR} and the surjection in~\cite{MPP} in the case of \emph{Fibonacci posets}
remains unclear.

We also conjecture that the function \. $u: S_n \to \nn$ \.
defined by~\eqref{eq:Sid-function} \ts is \. $\SP$-complete.
The conjecture would follow if $\SP$-completeness was proved for self-dual
$2$-dimensional posets $P\simeq \ov{P}$. Unfortunately, the construction
in~\cite{DP} is too specialized and technical to obtain this result.

Finally, there a $q$-analogue of Sidorenko's inequality in~\cite[Cor.~3]{GG}
generalizing $q$-equality for the series-parallel posets given
in~\cite{Wei}.  See also~\cite{KS} for the definition of $e_q(P)$ for
general~$P$ based on the $P$-partition theory, and~\cite{BW} for many
other results on~$e_q(P)$.

\smallskip

\subsection{Mixed Sidorenko inequality}\label{ss:finrem-mixed-sid}
In~\cite{BBS}, Bollob\'as, Brightwell and Sidorenko showed how to obtain Sidorenko's
Theorem~\ref{t:sid} via a known special case of \defng{Mahler's Conjecture}.
Most recently, Artstein-Avidan, Sadovsky and Sanyal extended this approach
in~\cite{AASS} to obtain the following remarkable generalization of the
Sidorenko inequality.

For two posets \. $P=(X,\prec)$ \. and \. $Q=(X,\prec')$ \. on the same set,
\defn{mixed linear extensions} are triples \ts $(f,g,J)$, where \.
$J\ssu [n]$, \. $f\in \LE(P|_J)$, and \. $g\in \LE\bigl(Q|_{\ov{J}}\bigr)$. Denote
by \ts $e_k(P,Q)$ \ts the number of such triples with $|J|=k$, i.e.\
$$
e_k(P,Q) \ := \ \sum_{J\in \binom{[n]}{k}} \. e(P|_J) \. e\bigl(Q|_{\ov{J}}\bigr).
$$

\begin{thm}[{\rm \cite[Thm~6.2]{AASS}}{}] \label{t:sid-mxed}
Let \. $P, Q, S, T$ \. be four posets on the same ground set, such that \. $|C\cap C'|\le 1$ \. and \. $|D\cap D'|\le 1$,
for all \. $C\in \cC(P)$, \. $C'\in \cC(Q)$,  \. $D\in \cC(S)$ \. and \. $D'\in \cC(T)$.
Then we have:
\begin{equation}\label{eq:sid-mixed}
e_k(P,Q) \, e_k(S,T) \ \ge \ n!\.\ts \binom{n}{k}\ts.
\end{equation}
\end{thm}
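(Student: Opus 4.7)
The plan is to follow the convex-geometric strategy of Bollob\'as--Brightwell--Sidorenko~\cite{BBS} and its mixed-volume extension in~\cite{AASS}. To each poset $R=(X,\prec)$ on $[n]$ I would associate the chain polytope $\cC_R\subseteq [0,1]^n$, the convex hull of indicator vectors of antichains of $R$; by Stanley's theorem~\cite{Sta-two}, $\vol(\cC_R)=e(R)/n!$. The Sidorenko chain condition for $(P,Q)$ is equivalent to the combinatorial duality that antichains of $P$ are chains of $Q$ and vice versa, which geometrically means $\cC_Q\subseteq \cC_P^{\circ}$ in the antiblocking polar, so that the pair $(\cC_P,\cC_Q)$ is in polar position; the same applies to $(\cC_S,\cC_T)$.

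The next step is to interpret $e_k(P,Q)$ as a normalized mixed volume. The plan is to prove
$$
e_k(P,Q) \, = \, k!\,(n-k)!\cdot V\!\bigl(\cC_P[k],\,\cC_Q[n-k]\bigr),
$$
where $V(\cC_P[k],\cC_Q[n-k])$ denotes the mixed volume with $k$ copies of $\cC_P$ and $n-k$ copies of $\cC_Q$. This identity would be obtained by expanding $\vol(\lambda\cC_P+\mu\cC_Q)$ as a polynomial in $(\lambda,\mu)$ and matching coefficients combinatorially: using the antiblocking polarity between $\cC_P$ and $\cC_Q$, one can decompose each lattice point of the dilated Minkowski sum into a $P$-part supported on a $k$-subset $J$ and a $Q$-part supported on $\ov J$, producing exactly the triples $(f,g,J)$ from Section~\ref{ss:finrem-mixed-sid}. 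Consistency at $k=0,n$ is immediate since the mixed volume degenerates to $\vol(\cC_Q)$ and $\vol(\cC_P)$.

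With this dictionary, \eqref{eq:sid-mixed} translates into a mixed Mahler-type inequality for antiblocking convex corners in polar position:
$$
V\!\bigl(\cC_P[k],\cC_Q[n-k]\bigr)\cdot V\!\bigl(\cC_S[k],\cC_T[n-k]\bigr) \, \ge \, \frac{n!\,\binom{n}{k}}{(k!)^2\.((n-k)!)^2}\ts.
$$
At the endpoints $k=0,n$ this is exactly Saint-Raymond's bound $\vol(K)\vol(K^{\circ})\ge 1/n!$, which is the geometric engine of \cite{BBS}. The hard part will be proving this mixed version for intermediate $k$. I would attempt this by applying the Alexandrov--Fenchel inequality to reduce each mixed volume to a geometric mean of ordinary volumes of iterated sections and projections of the chain polytopes, and then invoking Saint-Raymond on the resulting lower-dimensional antiblocking corners, using the polar position between $\cC_P,\cC_Q$ to pair each section on one side with its polar section on the other. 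Arranging the Alexandrov--Fenchel reduction so that it simultaneously respects the antiblocking structure and the polarity of both pairs $(\cC_P,\cC_Q)$ and $(\cC_S,\cC_T)$ is the technical heart of the argument, and the step I expect to be the most delicate.
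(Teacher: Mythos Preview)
The paper does not give its own proof of this theorem. Theorem~\ref{t:sid-mxed} is quoted from~\cite[Thm~6.2]{AASS} in the final-remarks section~$\S$\ref{ss:finrem-mixed-sid}, with the comment that Artstein-Avidan, Sadovsky and Sanyal ``extended this approach'' (namely the Bollob\'as--Brightwell--Sidorenko reduction to Saint-Raymond's Mahler-type bound) to obtain it. Immediately after the statement, the paper explicitly writes: ``It would be interesting to find a combinatorial proof of this result. It would be even more interesting to find a direct injective proof.'' So there is nothing to compare against here.

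Your outline is precisely the convex-geometric route that the cited source~\cite{AASS} takes: chain polytopes via~\cite{Sta-two}, the antiblocking polarity coming from the Sidorenko chain condition, the identification of $e_k(P,Q)$ with a normalized mixed volume of $\cC_P$ and $\cC_Q$, and then a mixed Mahler inequality for convex corners in polar position. In that sense your plan is not an alternative proof but a reconstruction of the one the paper is citing. Two caveats worth flagging. First, your combinatorial justification of the mixed-volume identity via ``decomposing lattice points of the dilated Minkowski sum'' is too loose as written; the actual argument goes through the antiblocking decomposition of $\lambda\cC_P+\mu\cC_Q$ and a volume computation, not lattice-point counting. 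Second, the step you label ``the hard part'' --- the mixed Mahler bound for antiblocking bodies --- is the entire content of the theorem in~\cite{AASS}; reducing it to Alexandrov--Fenchel plus lower-dimensional Saint-Raymond is not at all routine and is not something you can expect to fill in by hand. If your goal is to contribute something new relative to this paper, the open problem it poses is a \emph{combinatorial} or \emph{injective} proof, which your geometric plan does not address.
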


It would be interesting to find a combinatorial proof of this result.
It would be even more interesting to find a direct injective proof, and
conclude that the function giving the difference of the two sides of~\eqref{eq:sid-mixed}
is in~$\ts \SP$.  The results in~\cite{IP} suggest that this might not be possible.
Finally, does the \defn{mixed Sidorenko inequality}~\eqref{eq:sid-mixed} have an upper bound
similar to that in~\cite{BBS}?

\smallskip

\subsection{Complexity of correlation inequalities}\label{ss:finrem-more}
By taking the limit \. $t\to \infty$ \. in Lemma~\ref{l:main order polynomial}, we obtain:
\begin{equation}\label{eq:min-min} \tag{$\ast$}
(n-1) \. \cdot \. e(P) \. \cdot \. e\bigl(P \sm \{x,y\}\bigr)  \ \geq \ n \. \cdot \. e\big(P \sm x\big)  \. \cdot \.  e\big(P\sm y\big).
\end{equation}
It would be interesting to see if this inequality can be proved injectively.
Is the function giving the difference of the two sides of this inequality in~$\ts \SP$?

Note that, by applying the negative-correlation version of the
FKG inequality to the proof of Lemma~\ref{l:main order polynomial},
we obtain the following result:

\begin{lemma}\label{l:OP min-max}
	Let \. $P=(X,\prec)$ \. be a poset, let \. $x\in X$ \. be a minimal element, and let \. $y \in X$ \. be a maximal element such that $y$ does not cover~$x$.
	Then, for every integer $t>0$, we have:
	\[ {\Omega\big(P,\ts t\big)} \. \cdot \. {\Omega\big(P \sm \{x,y\},\ts t\big)}  \ \leq \  {\Omega\big(P \sm x, \ts t\big)}  \. \cdot \.  {\Omega\big(P\sm y,\ts t\big)}.
	\]
\end{lemma}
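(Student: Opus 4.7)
The approach will mirror that of Lemma~\ref{l:main order polynomial}, but invoke the negative-correlation form of Lemma~\ref{t:FKG order polynomial} (the reversed inequality obtained when $C$ is $Y$-minimizing and $C'$ is $Y$-maximizing) in place of the positive-correlation one.

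I would set $Y := \{x, y\}$ and $Z := X \setminus \{x, y\}$. Since $x$ is minimal and $y$ is maximal in $P$, every $P$-constraint between $Y$ and $Z$ has one of two shapes: $\{x \precc z\}$ with $z \in B(x) \cap Z$, or $\{z \precc y\}$ with $z \in L(y) \cap Z$, where $L(y) := \{u \in X \,:\, u \preccurlyeq y\}$ denotes the lower order ideal. I would then introduce the collections
\[
C \ := \ \bigcap_{z \in B(x) \cap Z} \{x \precc z\} \qquad \text{and} \qquad C' \ := \ \bigcap_{z \in L(y) \cap Z} \{z \precc y\},
\]
so that $C$ is $Y$-minimizing and $C'$ is $Y$-maximizing. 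The reversed assertion of Lemma~\ref{t:FKG order polynomial} then delivers
\[
\Omega(C \cap C' \cap A, t) \cdot \Omega(A, t) \ \leq \ \Omega(C \cap A, t) \cdot \Omega(C' \cap A, t).
\]

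Next, I would identify each of the four factors with an order polynomial of a subposet, following the scheme at the end of the proof of Lemma~\ref{l:main order polynomial}. By construction, $\Omega(C \cap C' \cap A, t) = \Omega(P, t)$ since $C \cap C' \cap A$ captures all constraints of $P$. In the principal subcase $x \parallel y$, the collection $A$ imposes no coupling between $x$ and $y$, so dropping $C'$ frees the variable $y$ and yields $\Omega(C \cap A, t) = t \cdot \Omega(P \setminus y, t)$; symmetrically $\Omega(C' \cap A, t) = t \cdot \Omega(P \setminus x, t)$; and $\Omega(A, t) = t^{2} \cdot \Omega(P \setminus \{x, y\}, t)$. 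Substituting into the reversed FKG inequality and cancelling the factor $t^{2}$ gives the claim.

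The hard part will be the remaining subcase $x \prec y$, which the hypothesis "$y$ does not cover $x$" still permits, so long as a witness chain runs through $Z$. Here $A$ contributes the additional constraint $\{x \precc y\}$, which distorts each of the three factorizations above (for instance, $\Omega(A,t)$ acquires a $\binom{t+1}{2}$ factor in place of $t^{2}$). I would address this by exploiting the intermediate element $z_{0} \in Z$ with $x \prec z_{0} \prec y$ guaranteed by the non-cover hypothesis: the relation $\{x \precc y\}$ is implied by $\{x \precc z_{0}\} \in C$ together with $\{z_{0} \precc y\} \in C'$, and a careful re-bookkeeping of the constraints in $A$, $C \cap A$, and $C' \cap A$ that absorbs $\{x \precc y\}$ into these larger collections should restore the clean factorization. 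Checking that this adjustment preserves the direction of the reversed FKG inequality is precisely where the condition that $y$ does not cover $x$ enters essentially.
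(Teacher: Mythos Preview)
Your overall strategy---taking $Y=\{x,y\}$, $Z=X\setminus\{x,y\}$, letting $C$ be the $Y$-minimizing collection of constraints $\{x\precc z\}$ and $C'$ the $Y$-maximizing collection $\{z\precc y\}$, and invoking the reversed form of Lemma~\ref{t:FKG order polynomial}---is exactly what the paper has in mind, and your treatment of the case $x\parallel y$ is correct.

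The gap is in the case $x\prec y$. Your plan to ``absorb $\{x\precc y\}$ into these larger collections'' cannot work as written: with the collection $A$ of~\S\ref{ss:OP-induction-prelim} unchanged, the constraint $\{x\precc y\}$ sits inside $A$ and therefore also inside $C\cap A$, $C'\cap A$, and $A$ itself. There is nothing in $C$ alone (or $C'$ alone) that forces $v_x\le v_y$, so you get $\Omega(A,t)=\binom{t+1}{2}\,\Omega(P\setminus\{x,y\},t)$ rather than $t^2\,\Omega(P\setminus\{x,y\},t)$, and similarly the side terms pick up weights $(t-v_x+1)$ and $v_y$ instead of the flat factor~$t$. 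After cancellation this yields only the weaker bound
\[
\Omega(P,t)\cdot\Omega(P\setminus\{x,y\},t)\ \le\ \frac{2t}{t+1}\ \Omega(P\setminus x,t)\cdot\Omega(P\setminus y,t),
\]
which is not what you want.

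The fix is to go the other way: \emph{remove} the $Y$-internal constraint $\{x\precc y\}$ from $A$ (equivalently, from the weight $\mu$). The proof of Shepp's log-supermodularity in~\S\ref{ss:Shepp lattice} treats the $Y$--$Y$ and $Z$--$Z$ constraints independently, so the modified weight $\mu'$ encoding only the $Z$--$Z$ constraints is still log-supermodular, and the FKG argument of Lemma~\ref{t:FKG order polynomial} goes through verbatim with the smaller collection $A':=\bigcap_{z\prec z',\,z,z'\in Z}\{z\precc z'\}$. With $A'$ the three side terms factor cleanly as $t^2\,\Omega(P\setminus\{x,y\},t)$, $t\,\Omega(P\setminus y,t)$, $t\,\Omega(P\setminus x,t)$, uniformly in both the comparable and incomparable cases. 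The non-cover hypothesis is then used exactly once, to verify $\Omega(C\cap C'\cap A',t)=\Omega(P,t)$: the missing relation $\{x\precc y\}$ is recovered from $\{x\precc z_0\}\in C$ and $\{z_0\precc y\}\in C'$ via the intermediate element $z_0$.
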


By taking the limit \. $t\to \infty$ \. in the lemma, we get the inequality opposite to~\eqref{eq:min-min}:
\begin{equation}\label{eq:min-max} \tag{$\ast\ast$}
(n-1) \. \cdot \. e(P) \. \cdot \. e\bigl(P \sm \{x,y\}\bigr)  \ \leq \ n \. \cdot \. e\big(P \sm x\big)  \. \cdot \.  e\big(P\sm y\big).
\end{equation}
Of course, the element $y$ was minimal in~\eqref{eq:min-min} and is maximal in~\eqref{eq:min-max},
but these inequalities are striking in appearance.  Again, it would be interesting to see if this
inequality can be proved injectively.

\smallskip

\subsection{Vanishing and uniqueness conditions}\label{ss:finrem-vanishing}
Note that the vanishing conditions for the Stanley inequality are a
special case of the \defna{equality conditions}, which are fully described
in~\cite{SvH} and reproved in~\cite{CP}.  For example, Corollary~\ref{cor:antichain}
and Corollary~\ref{c:sid-equality} give further examples of equality conditions,
with a simple proof in both cases via direct injection.
%
%
When there is no injective proof, the equality condition can become a major challenge.  
On the other hand, the vanishing and uniqueness conditions tend to be much 
easier to establish using either combinatorial or geometric tools (see~\cite{EG}).

For example, for the \defng{Kahn--Saks inequality} generalizing Stanley's inequality, the
equality conditions remain open in full generality.  See, however, \cite[$\S$8]{CPP2}
for the vanishing conditions of the Kahn--Saks inequality, proved also via the promotion
technology.  See also \cite{CPP1,CPP2},
for the equality conditions of the Kahn--Saks and cross-product inequalities
for posets of width two.  Finally, let us mention Lemma~14.6 in~\cite{CP}, which is
yet another variation on Theorem~\ref{t:LE-gen-transitive} and proved by a direct
combinatorial argument using promotions.

In a different direction, sometimes the equality conditions are trivial as the
natural inequalities are always strict except for some degenerate cases.
This is the case with the \defng{XYZ inequality}~\cite{Fish}, and the
\defng{log-concavity} (Theorem~\ref{thm:log-concave-strict}) discussed above.

The uniqueness conditions are studied less frequently than vanishing and equality
conditions, since they tend to be harder.  For example, there is no description of
the uniquely colorable graphs, and this remains a major open problem~\cite{CZ}.
Notable positive results include uniqueness conditions for the \defng{Kostka
numbers}~\cite{BZ} and for the \defng{Littlewood--Richardson coefficients}
\cite[Prop.~3.13]{BI}.

\smallskip

\subsection{Poset dynamics}\label{ss:finrem-algebraic}
Promotions, demotions and evacuations were defined by Sch\"utzenberger in~\cite{Sch},
and this approach has been immensely influential leading to the \defng{RSK Algorithm} \ts
and the \defng{Edelman--Greene bijection}, among other things.
Group theoretic approach in the context of combinatorics of words were
developed by Lascoux and Sch\"utzenberger, and specifically in the generality
of posets were introduced by Haiman~\cite{Hai} and Malvenuto--Reutenauer~\cite{MR}.
See also~\cite{KB} for a related approach in the context of semistandard Young
tableaux, and~\cite{Sta-promo} for an extensive survey.

It would be interesting to find a generalization of the evacuation~$\ve$ which would
preserve its involution property.  Such
``restricted evacuation'' might give rise to ``restricted domino linear
extensions'' which would be of independent interest,
see e.g.~\cite[$\S$3]{Sta-promo}.

The extension of promotion to general bijections \. $X\to [n]$ \. was obtained
in~\cite{DK}.  Can our group action on restricted linear extensions be
generalized in this direction?  Note that we have only limited
understanding if the group action can be applied to study the
order polynomial.  See however~\cite{Hop} for some elegant
product formulas in some special cases.

The promotion operators were used in~\cite{AKS} to define a Markov chain on
the set \ts $\Ec(P)$ \ts of linear extensions of a given poset~$P$.
See also~\cite{RS} where a related Markov chain was shown to be
mixing in time \ts $O(n \log n)$.  It would be interesting to see if these
results can be generalized to show that the restricted linear extensions in
\. $\Ec(P,\xx, \aa)$ \. can be sampled in polynomial time.

Finally, let us mention a curious loop-free listing algorithm in~\cite{CW}.
Is there a similar algorithm for the restricted linear extensions?

\smallskip

\subsection{Injections and matchings for the Stanley inequality}\label{ss:finrem-GP-injection}
As we mentioned in the introduction, it remains a major open problem whether
Stanley's inequality~\eqref{eq:Sta} can be proved by a direct injection,
see e.g.~\cite[$\S$17.17]{CP}.  Formally, in the notation of Theorem~\ref{t:Sta},
let
$$
\rho(P,x,a) \ := \ \aN(P, x,a)^2 \, - \, \aN(P, x,a+1) \.\cdot \.  \aN(P, x,a-1)\ts.
$$

\begin{op} \label{op:Sta-SP}
Is \. $\rho \ts \in \. \SP$?
\end{op}

At this point, it is even hard to guess which way
the answer would go.  While some of us believe the answer should be negative,
others disagree.  The only thing certain is that none of the positive proofs
in~\cite{CP,Sta-AF} imply a positive answer, while the negative results in~\cite{IP}
are not even close to resolving the problem.  Since part of the motivation behind
our algebraic approach aimed at resolving this problem, let us propose the following
approach.

\smallskip

We would like to give an injection proving Stanley's inequality~\eqref{eq:Sta}.
Consider the following family of elements of the group $G$ from Section~\ref{s:restricted} whose actions would be good
candidates for such an injection.
Let\. $\mathcal{G}=(V\sqcup W,E)$ \. be a bipartite graph, where \.
$V=\Ec(P,x,a) \times \Ec(P,x,a)$ \. and \. $W= \Ec(P,x,a-1) \times \Ec(P,x, a+1)$.
We define the set~$E$ of edges as follows.

Let \. $\pi \in \Ec(P,x,a-1)$ \. and \. $\sigma \in \Ec(P,x,a+1)$, so that \. $(\pi,\sigma) \in W$.
For every element \ts $y$ \ts which appears after \ts $x$ \ts in \ts $\pi$ and before \ts $x$ \ts in \ts $\sigma$,
that is \. $i:=\pi^{-1}(y) > a-1$ \. and \. $j:=\sigma^{-1}(y) <a+1$, we apply the
promotion/demotion operators on the chain starting/ending at~$y$ in $\sigma$ and $\pi$, respectively.
Let \. $\bar{\delta}_j := \tau_{n-1}\cdots \tau_j$.  Then in the word \ts $\delta_i \pi$,
the chain starting at~$y$ is pushed up, so that~$x$ is moved to position~$a$. Similarly,
in the word \. $\bar{\delta}_j \sigma$, the chain ending at~$y$ is pushed down,
so that~$x$ is moved to position $a$. Thus \. $(\delta_i \pi, \bar{\delta}_j\sigma) \in \Ec(P;a;x) \times \Ec(P;a;x) = V$,
and we connect it to \ts $(\pi,\sigma)$ \ts by an edge. Note that by the pigeonhole principle,
there are at least two possibilities for elements~$y$, and thus there will be at least one edge,
however it is not necessarily true that the degree of every \ts $(\pi,\sigma)$ \ts is at least~$2$.

\begin{conj}
Let \. $\mathcal{G}=(V\sqcup W, E)$ \. be the graph defined above.
Then there exists a maximal matching which covers all vertices in~$W$.
\end{conj}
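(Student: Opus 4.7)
The plan is to invoke Hall's marriage theorem. Stanley's inequality (Theorem~\ref{t:Sta}) guarantees $|V| \ge |W|$, and a matching covering $W$ exists if and only if the Hall condition $|N(S)| \ge |S|$ holds for every $S \subseteq W$, where $N(S) \subseteq V$ denotes the set of $V$-vertices adjacent to at least one element of $S$. The plan has three phases: establish basic degree bounds, construct an explicit candidate injection, and then attempt to verify injectivity.

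First I would nail down the basic degree structure. The pigeonhole argument sketched just before the conjecture shows that every $(\pi,\sigma) \in W$ admits at least two valid choices of $y \in X\setminus\{x\}$, since the sets $\{y : \pi^{-1}(y) \ge a\}$ and $\{y : \sigma^{-1}(y) \le a\}$ have total size $(n-a+1)+a = n+1$ while $|X\setminus\{x\}|=n-1$. Hence every vertex in $W$ has degree at least $2$ in $\mathcal{G}$. This bound is necessary for Hall's condition (when $|W|$ is small) but far from sufficient in general.

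Second, I would attempt to construct an explicit injection $\phi : W \to V$ by selecting, for each $(\pi,\sigma) \in W$, a canonical element $y^*(\pi,\sigma)$ via a tie-breaking rule --- for instance, the valid $y$ with smallest $\sigma^{-1}(y)$, breaking ties by largest $\pi^{-1}(y)$ --- and setting $\phi(\pi,\sigma) := (\delta_i\pi,\, \bar\delta_j\sigma)$ with $i = \pi^{-1}(y^*)$, $j = \sigma^{-1}(y^*)$. To verify injectivity, I would describe an inverse map: from $(\alpha,\beta) \in V$, try to identify $y^*$ at a distinguished position (most naturally position $a$, which now holds $x$ in both words), read off the promotion and demotion chains used by $\delta_i$ and $\bar\delta_j$, and reverse them to recover $(\pi,\sigma)$.

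The main obstacle is verifying that the image of $\phi$ is collision-free. In small examples it is already delicate: distinct pairs in $W$ may map to the same $(\alpha,\beta) \in V$ via different choices of $y$, and no obvious tie-breaking rule circumvents all such collisions. A successful injection $\phi$ would constitute a direct injective proof of Stanley's inequality, resolving Open Problem~\ref{op:Sta-SP} in the affirmative; thus the difficulty here is exactly the obstruction that has kept this longstanding problem open. A plausible resolution would require either a more refined rule for $y^*$ --- potentially informed by the group action of $\GG$ from Section~\ref{s:restricted} or by global invariants of the pair $(\pi,\sigma)$ such as its joint descent structure --- or a nonconstructive Hall-type argument exploiting the structure of $\Ec(P,x,a)$ without committing to canonical choices. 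In view of this, I do not expect a short proof; the honest statement of the plan is that the first two steps are mechanical, and essentially all the work lies in the third.
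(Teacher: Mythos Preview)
This statement is a \emph{conjecture} in the paper, not a theorem; the paper offers no proof. Immediately after stating it, the authors write that such a matching ``will be the desired injection and imply the Stanley inequality,'' and in the same subsection they pose finding an injective proof of Stanley's inequality as Open Problem~\ref{op:Sta-SP}. So there is no paper proof to compare your proposal against.

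Your plan is honest about this: you correctly observe that a successful injection $\phi$ would resolve Open Problem~\ref{op:Sta-SP}, and you concede that ``essentially all the work lies in the third'' step. That is accurate --- the first two phases (degree bounds via pigeonhole, and writing down a candidate $\phi$ with a tie-breaking rule) are indeed mechanical, and the paper already sketches the degree argument. But phase three is not a gap in your write-up so much as the entire open problem. Your proposal is therefore not a proof attempt but a restatement of why the conjecture is hard, which matches the paper's own framing.
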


This matching will be the desired injection and imply the Stanley inequality.
By itself, the conjecture would not imply that \. $\rho\ts \in\. \SP$\ts.
For that, the injection would need to be computable in polynomial time.

\vskip.5cm

\subsection*{Acknowledgements}
We are grateful to Nikita Gladkov, F\"edor Petrov, Yair Shenfeld and Ramon van Handel
for helpful discussions and remarks on the subject.  Matt Beck, Fu Liu and Sinai Robins
kindly helped us with the Ehrhart polynomial questions.  We thank Christian Gaetz and
Yibo Gao for help with the references, and Darij Grinberg for careful reading of the
paper.  Over the years, we held numerous conversations with Christian Ikenmeyer
on complexity, and the knowledge we acquired has been indispensable.
We  thank the anonymous referees for insightful comments and for additional references.

We thank Vic Reiner for sharing his proof of Theorem~\ref{t:HP-q}
with us and generously allowing us to publish it.  We also thank
Sam Hopkins for telling us about Exc.~3.143 in \cite{Sta-EC} (see
Remark~\ref{r:comp}).  Special thanks to
Richard Stanley for resolving the mystery where Exc.~3.57 comes from,
and telling us about Exc.~3.163 in \cite{Sta-EC}.  We continuously
regret not memorizing all the exercises in~\cite{Sta-EC}.

Finally, we owe a dept of gratitude
to Yufei Zhao who convinced us not to initialize first names
in the references, a practice we followed for years.  It is the right
thing to do and we urge others to follow the suit.\footnote{For more on this, see
Yufei~Zhao,
How I manage my {\tt BibTeX} references, and why I prefer not initializing first names,
\ts \href{https://yufeizhao.com/blog/2021/07/04/bibtex/}{personal blog post} \ts (July 4, 2021).}
This research was partially done while the third author was enjoying
MSRI's hospitality in the Fall of~2021.
The first author was partially supported by the Simons Foundation.
The second and third authors were partially supported by the~NSF.



\vskip1.1cm

\vskip.7cm

\end{document}